%
\documentclass[letter,10pt,reqno]{amsart}
\usepackage[margin=4cm]{geometry}
\usepackage{amsmath,amsthm,amsfonts,amssymb,float}
\usepackage{yhmath,nicefrac,nccmath}
\usepackage{mathtools} 
\usepackage{graphicx,xcolor}
\usepackage{microtype}
\usepackage[noadjust]{cite}
\usepackage{booktabs}
\usepackage{dsfont} 
\usepackage[utf8]{inputenc} 
\usepackage[pdftex,bookmarksnumbered,colorlinks,plainpages]{hyperref}   
\hypersetup{citecolor=blue,linkcolor=purple}




\newtheorem{theorem}{Theorem}[section]
\newtheorem{corollary}[theorem]{Corollary}
\newtheorem{lemma}[theorem]{Lemma}
\newtheorem{proposition}[theorem]{Proposition}
\theoremstyle{definition}
\newtheorem{definition}[theorem]{Definition}
\theoremstyle{remark}
\newtheorem{remark}[theorem]{Remark}



\DeclareMathAlphabet\EuRoman{U}{eur}{m}{n}
\SetMathAlphabet\EuRoman{bold}{U}{eur}{b}{n}

\DeclareSymbolFont{extraitalic}      {U}{zavm}{m}{it}
\DeclareMathSymbol{\Qoppa}{\mathord}{extraitalic}{161}
\DeclareMathSymbol{\qoppa}{\mathord}{extraitalic}{162}
\DeclareMathSymbol{\Stigma}{\mathord}{extraitalic}{167}
\DeclareMathSymbol{\Sampi}{\mathord}{extraitalic}{165}
\DeclareMathSymbol{\sampi}{\mathord}{extraitalic}{166}
\DeclareMathSymbol{\stigma}{\mathord}{extraitalic}{168}

\DeclareRobustCommand{\anchor}{%
  \mathord{\text{\kern.1ex
    \ooalign{%
      \raisebox{0.1ex}{\scalebox{0.86}{$\dag$}}\cr
      \kern-.3ex\raisebox{-.15em}{\scalebox{0.8}{$\smallsmile$}}\cr%
  }}}%
}
      



\DeclareMathSymbol{\widehatsym}{\mathord}{largesymbols}{"62}

\makeatletter
\newcommand*\rel@kern[1]{\kern#1\dimexpr\macc@kerna}
\newcommand*\widebar[1]{%
  \begingroup
  \def\mathaccent##1##2{%
    \rel@kern{0.8}%
    \overline{\rel@kern{-0.8}\macc@nucleus\rel@kern{0.2}}%
    \rel@kern{-0.2}%
  }%
  \macc@depth\@ne
  \let\math@bgroup\@empty \let\math@egroup\macc@set@skewchar
  \mathsurround\z@ \frozen@everymath{\mathgroup\macc@group\relax}%
  \macc@set@skewchar\relax
  \let\mathaccentV\macc@nested@a
  \macc@nested@a\relax111{#1}%
  \endgroup
}
\makeatother

\newcommand{\comment}[1]{}

\renewcommand{\leq}{\leqslant}
\renewcommand{\geq}{\geqslant}

\newcommand{\R}{\mathds{R}}

\newcommand{\itm}[1]{\textup{(}\textit{#1}\textup{)}}

\newcommand{\I}{\mathrm{I}}
\newcommand{\II}{\mathrm{II}}
\newcommand{\III}{\mathrm{III}}
\newcommand{\Ia}{\mathrm{I}^{\mathrm{aff}}}

\newcommand{\IIIa}{\mathrm{III}^{\mathrm{aff}}}


\title{Blaschke's asymptotic lines of surfaces in $\mathbb{R}^3$}

\author{Martín Barajas-Sichacá} 
\author{Ronaldo Garcia}
\author{Andr\'es Vargas}

\date{}
 


\begin{document}

\begin{abstract}
In this paper we consider the Blaschke's asymptotic lines (also called affine asymptotic lines) of regular surfaces in 3-space. We study the binary differential equations defining Blaschke's asymptotic lines in the elliptic and hyperbolic regions of the surface near  affine cusp  points and to flat affine umbilic points. We also describe the affine asymptotic lines near  the   Euclidean parabolic set including the Euclidean flat umbilic points.
\\

\noindent\textbf{Mathematics Subject Classification.} 53A15, 53A05, 53A60.\\

\noindent\textbf{Keywords.} Affine differential geometry, affine asymptotic lines, affine parabolic points,   affine cuspidal  points, flat affine umbilic points, parabolic points.
\end{abstract}

\maketitle

\section{Introduction}

The geometry of surfaces is a classical subject in mathematics that has a long tradition of interesting results and whose range of applicability includes a wide spectrum of disciplines including theoretical and applied physics, material engineering and design, computer vision, and mathematical biology, among others. This broad range of applications stem from the simple fact that surfaces are ubiquitous in everyday life because we live immersed in a three-dimensional space.

Many techniques have been developed to study different aspects of the geometry of surfaces, including algebraic, analytic and topological tools. For example, according to Felix Klein's ``\textit{Erlangen Program}" (1872), the use of geometric transformation groups provide a fundamental method for the classification of different geometries. The study of properties of geometric objects that are invariant under a given transformation group $\mathcal{G}$ is called \textsl{the geometry subordinated to $\mathcal{G}$}. In particular, Euclidean geometry can be understood in this way when the group $\mathcal{G}$ is the group of so-called \emph{Euclidean motions}, i.e., translations and rotations in $\mathbb{R}^3$. Similarly, affine geometry is associated to the group of affine transformations, where in addition to translations and rotations, all invertible linear transformations are also admitted.

In this work we consider and study Blaschke's asymptotic lines defined by the third affine fundamental form in the context of affine differential geometry of surfaces in $\mathbb{R}^3$. The main goal will be to  obtain the generic singularities and to describe the qualitative behavior of these lines near them.

Recall that in the classical case of a surface $S\subset \mathbb{R}^3$, an asymptotic direction at $T_pS$ is a direction $v$ such that $v$ is tangent to a regular curve defined by the intersection of $S$ with $T_pS$.  In the elliptic part the asymptotic directions are imaginary, and in the hyperbolic part for each point we have two asymptotic directions. 
At parabolic points,  in the generic case (the parabolic set is formed of regular curves) and we have one  asymptotic direction which can be tangent or transversal to the parabolic curve.  For basic properties and historical aspects of asymptotic lines  see    \cite{Hilbert1952}  and \cite{struik}.
A generalization of the notion  of asymptotic directions and lines involves the concept of second fundamental form and conjugated directions.
A natural way is to consider   a congruence of lines in $\mathbb R^3$  defined by
\[ z(x,y)+t\phi(x,y)  \]
The second fundamental form of the congruence is defined by $\II=\langle dz, d\phi\rangle.$
The directions where $\II=0$ are called asymptotic directions of the congruence. See \cite[Appendix B]{sasaki}.

In the context of affine differential geometry the second fundamental is classicaly called the third affine fundamental form and it is defined as $\III=\langle D\nu, D\xi\rangle$, where $\nu$ is the conormal and $\xi$ is the affine normal of Blaschke. Further details appear in the preliminaries.

This paper is organized as follows. In Section~\ref{sec:preliminares} we provide a review of the concepts of fundamental forms in Euclidean and affine differential geometry of surfaces, including the differential equation of special curves, in particular, asymptotic lines. In Section~\ref{sec:conormal} the co-normal surface is introduced and the relation between affine asymptotic lines of the original surface and asymptotic lines of the co-normal surface is established. In Section~\ref{sec:section4} we consider the affine asymptotic lines near the singularities which are not parabolic Euclidean points. In Section~\ref{sec:section5} we analize the affine asymptotic lines near the parabolic Euclidean points. Finally, in Section~\ref{sec:section6} an example of the torus of revolution showing the global behavior of affine asymptotic lines is presented.

\subsection*{Acknowledgements}

The first author was supported by Pontificia Universidad Javeriana, research project ID-PROY 20165 during a post-doctoral fellowship at Department of Mathematics, Pontificia Universidad Javeriana, Bogot\'a, Colombia.
The third author also acknowledges partial support from the same project. 
The second author is fellow of CNPq and coordinator of the project PRONEX/FAPEG/CNPq.

%
%
\section{Preliminaries}\label{sec:preliminares} 
To provide a rigorous context to the statement of our results, we start with a brief  review of the required notions of classical differential geometry.
%
%
\subsection{Classic facts in Euclidean differential geometry}
Consider a smooth germ of an immersion $\alpha\colon S\to\mathbb{R}^3$ of a surface $S$ into Euclidean 3-space. In Euclidean differential geometry the \textit{fundamental forms} of $\alpha$ at point $p\in S$ are defined as the symmetric bilinear forms on the tangent space $T_pS$ given as follows (see~\cite{Carmo1976}):
\begin{itemize}
\item The \textit{first fundamental form}
\begin{equation}\label{EIff}
\I_\alpha(p;w_1,w_2)=\left\langle D\alpha(p;w_1),D\alpha(p;w_2)\right\rangle.
\end{equation}
\item The \textit{second fundamental form}
\begin{equation}\label{EIIff}
\II_\alpha(p;w_1,w_2)=-\left\langle DN_\alpha(p;w_1),D\alpha(p;w_2)\right\rangle.
\end{equation}
\end{itemize}
Here $\left\langle\cdot\, , \cdot \right\rangle$ is the Euclidean inner product on $\mathbb{R}^3$, the vectors $w_{1},w_{2}\in T_{p}S$,  and $N_\alpha$ is the unit normal associated to the immersion:
\begin{equation*}
N_\alpha=\frac{\alpha_u\wedge\alpha_v}{\left|\alpha_u\wedge\alpha_v\right|},
\end{equation*}
where $(u,v)\colon U\subset S\to\mathbb{R}^2$ denotes a chart on $S$, ``$\wedge$'' stands for the vector (or wedge) product in $\mathbb{R}^3$, and $\alpha_u\coloneqq\mfrac{\partial\alpha}{\partial u}$, $\alpha_v\coloneqq\mfrac{\partial\alpha}{\partial v}$. A vector $w\in T_pS$ for which the normal curvature
\begin{equation}\label{CurNor}
k_n(p;w)=\frac{\II_\alpha(p;w,w)}{\I_\alpha(p;w,w)}
\end{equation}
vanishes, is called an \textit{asymptotic direction of $\alpha$ at $p$}. A regular curve ${c\colon [a,b]\rightarrow S}$, whose tangent line is an asymptotic direction is called an \textsl{asymptotic line of $\alpha$}. Through every point $p$ of the hyperbolic region $\mathbb{H}_\alpha$ of the immersion $\alpha$; characterized by the condition that the Gaussian curvature $K_\alpha=\det DN_\alpha$ is negative, pass two transverse asymptotic lines of $\alpha$, tangent to the two asymptotic directions through $p$. When it is non-empty, the region $\mathbb{H}_\alpha$ is bounded by the set (generically a regular curve) $\mathbb{P}_\alpha$ of parabolic points of $\alpha$, on which $K_\alpha$ vanishes. On $\mathbb{P}_\alpha$, the pair of asymptotic directions degenerate into a single one. The parabolic points will be regarded here as the singularities of the asymptotic net. Along $\mathbb{P}_\alpha$, the asymptotic directions are transversal to the parabolic set except at isolated points called the \textit{cusp of Gauss points}. The asymptotic lines near to the  cusp of Gauss points have been well studied, see for example~\cite{Garcia1999,Izumiya2016} where it is shown that, generically, the asymptotic lines near to the cusp of Gauss points behave as in Fig.\,\ref{fig1}:
\begin{figure}[htb!]
	\centering
	\includegraphics[width=.9\textwidth,clip]{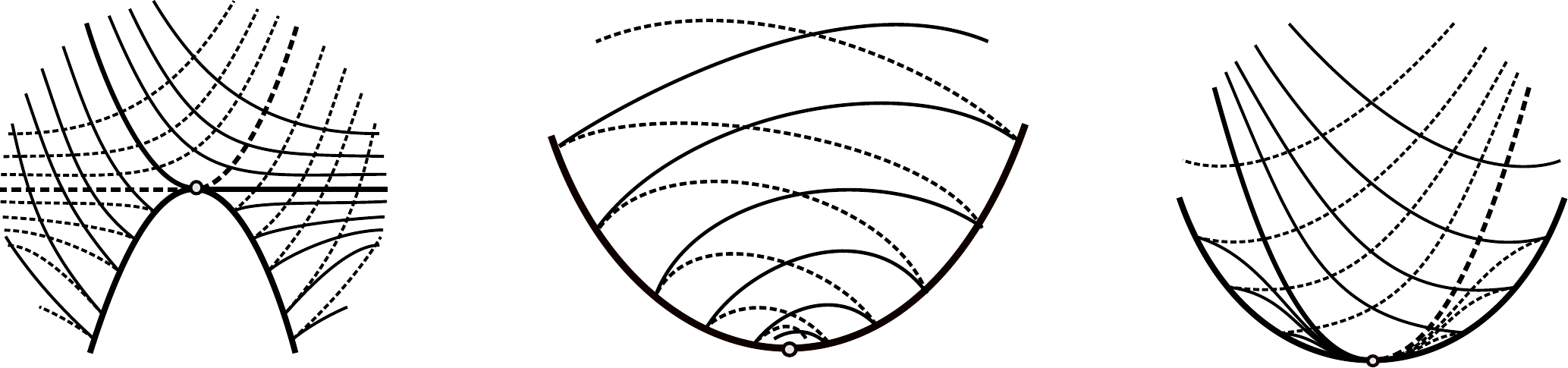}
	\caption{\small Asymptotic lines near Gauss cuspidal points: folded saddle (left), folded focus (center), and folded node (right).}
	\label{fig1}
\end{figure}

The closed  asymptotic lines and extended  closed   asymptotic lines were also studied in~\cite{Garcia1999}. Asymptotic lines, together with geodesics and principal curvature lines are studied in classical differential geometry by many authors. For principal curvature lines, which are smooth curves such that the tangent at each point is an eigenvector of $DN_\alpha$, there is a classification of  local topological models near the  umbilic points (see Fig.\,\ref{fig2}).
\begin{figure}[htb!]
	\centering
	\includegraphics[width=.9\textwidth,clip]{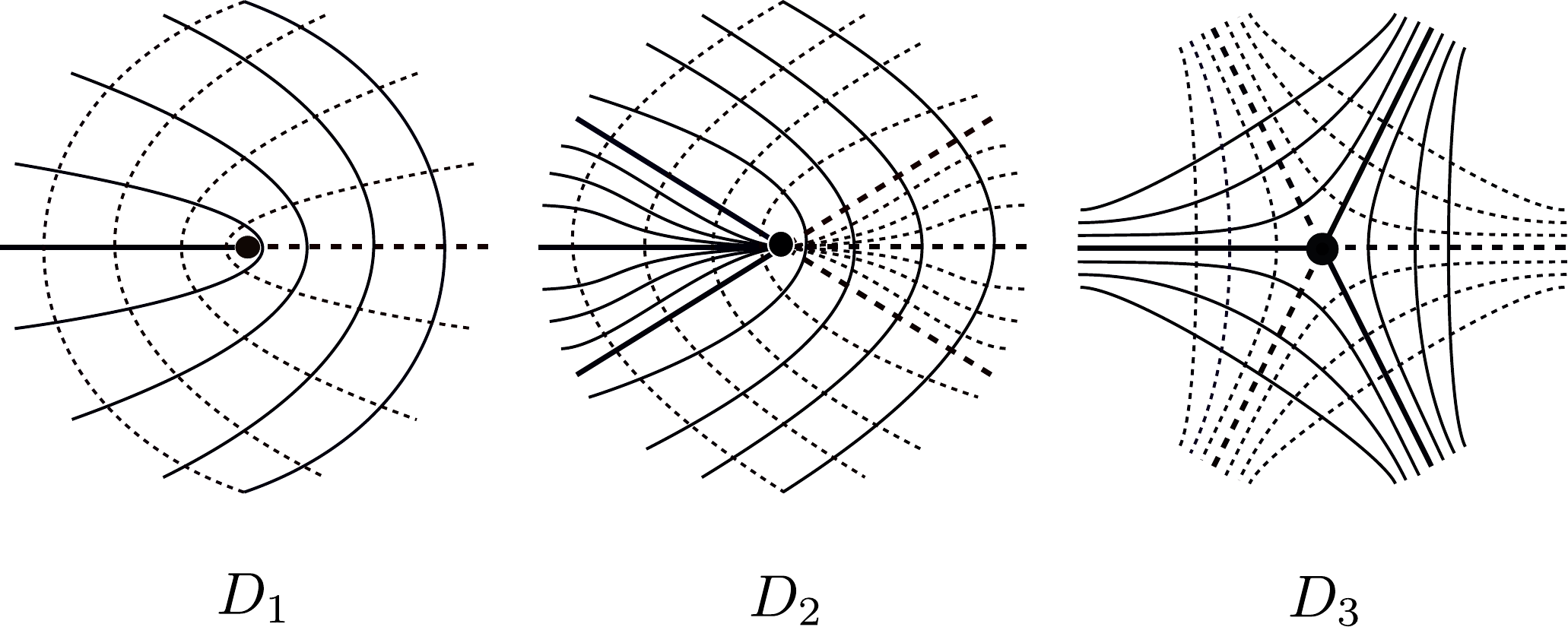}
	\caption{\small (Darbouxian umbilics). Generic and stable topological models of principal curvature lines near an umbilic point of index $1/2$ (left and center) and index $-1/2$ (right).}
	\label{fig2}
\end{figure}

Closed principal curvature lines were also studied, first by C.~Gutierrez and J.~Sotomayor in~\cite{GS-1982}. For more details on these results see~\cite{Garcia2009}.



\subsection{Affine differential geometry}
%
The affine differential geometry of surfaces is the study of properties of surfaces in three-dimensional space that are invariant under the group of unimodular affine transformations ASL($\mathbb{R}^3$). A survey about the origins of this subject of research can be fond in~\cite{Agnew2009}. Affine differential geometry has been studied by many authors and it is a subject of current research, see for example~\cite{BGC2020,Calabi1982,Davis2008,Davis2009,Simon,Nomizu1994,Buchin,Buchin1983}.

Classical invariants under rigid motions (Euclidean case) are heavily used in many applications of computer graphics and geometric modeling. The affine case, being much more general, allows to extend these tools to a broader set of situations and applications. For example, in~\cite{Andrade2011a,Andrade2012}  the authors have introduced affine differential geometry tools into the world of computer graphics.

Analogously to the Euclidean case, for a smooth germ of an immersion $\alpha\colon S\to\mathbb{R}^3$ of a surface $S$ in $3$-space, \textit{affine fundamental forms} of $\alpha$ at point $p\in S$ are also defined as symmetric bilinear forms on the tangent space $T_pS$ as we now explain (see~\cite{Buchin1983,Nomizu1994}).

The \textit{affine first fundamental form} or \textit{Berwald--Blaschke metric} is given by 
\begin{equation}\label{AIff}
\Ia_\alpha(p;w)=\left|K_\alpha\right|^{-\frac{1}{4}}\II_\alpha(p;w).
\end{equation}
If we write $w=a\alpha_u+b\alpha_v$ with $a,b\in\mathbb{R}$ then
\begin{equation*}
\Ia_\alpha(p;w) = a^2g_{11} +2ab\,g_{12} +b^2g_{22},
\end{equation*}
where
\begin{equation*}
g_{11}=\frac{L}{\left|LN-M^2\right|^{\frac{1}{4}}},\quad g_{12}=\frac{M}{\left|LN-M^2\right|^{\frac{1}{4}}},\quad g_{22}=\frac{N}{\left|LN-M^2\right|^{\frac{1}{4}}},
\end{equation*}
and
\begin{equation*}
L=\left|\alpha_u,\alpha_v,\alpha_{uu}\right|,\quad M=\left|\alpha_u,\alpha_v,\alpha_{uv}\right|,\quad N=\left|\alpha_u,\alpha_v,\alpha_{vv}\right|. 
\end{equation*}
Here $\left|w_1,w_2,w_3\right|$ denotes the determinant of the vectors $w_1,w_2,w_3$. Furthermore, the \textit{co-normal vector} $\nu$ to $S$ at $p$ is given by 
\begin{equation}\label{eq_conorm}
\nu(p)=\left|K_\alpha(p)\right|^{-\frac{1}{4}}N_\alpha(p)=\frac{1}{\left|LN-M^2\right|^{\frac{1}{4}}}\left(\alpha_u\wedge \alpha_v\right).
\end{equation}
There is a single transversal field $\xi$ defined on $S$, totally determined by the relations
\begin{equation}\label{eq_rel_noraff}
\left\langle \nu,\xi\right\rangle=1\quad\text{and}\quad\left\langle \xi,\nu_u\right\rangle=\left\langle \xi,\nu_v\right\rangle=0,
\end{equation}
 such that $\left\{\alpha_u,\alpha_v,\xi\right\}$ is a frame on the surface. The vector field $\xi$ is called \textit{affine normal field} and locally it is uniquely determined up to a direction sign. The affine normal vector satisfies $D\xi\subset TS$. Thus, for $p\in S$ and $w\in T_pS$, $D\xi(w)=B_{ij}w$, where the $B_{ij}$ is the \textsl{affine shape operator}. By the relations in~\eqref{eq_rel_noraff} and~\eqref{eq_conorm} we have
\begin{equation}\label{eq:na}
\xi=\frac{1}{\left|LN-M^2\right|^{\frac{1}{4}}}\left(\nu_u\wedge \nu_v\right).
\end{equation}
Denote by $(b_{ij})$ the coefficients of the matrix of affine shape operator in the basis $\left\{\alpha_u,\alpha_v\right\}$, thus
\begin{equation}\label{Der_norm_afin}
\left[
\begin{array}{c}
\xi_u \\
\xi_v 
\end{array}
\right]=
\left[
\begin{array}{cc}
b_{11} & b_{21} \\
b_{12} & b_{22}
\end{array}
\right]
\left[
\begin{array}{c}
X_u \\
X_v 
\end{array}
\right],
\end{equation}
where
\begin{align}\label{bij1}
b_{11} & = \left|LN-M^2\right|^{-\frac{1}{4}}\left|\xi_u,X_v,\xi\right|, \notag\\
b_{21} & = \left|LN-M^2\right|^{-\frac{1}{4}}\left|X_u,\xi_u,\xi\right|, \notag\\
b_{12} & = \left|LN-M^2\right|^{-\frac{1}{4}}\left|\xi_v,X_v,\xi\right|, \\
b_{22} & = \left|LN-M^2\right|^{-\frac{1}{4}}\left|X_u,\xi_v,\xi\right|.
\end{align} 
The affine third fundamental form is defined as 
\begin{equation}\label{AIIIff}
\IIIa_\alpha=\left\langle D\nu,D\xi\right\rangle,
\end{equation}
where $\langle\cdot\,,\cdot\rangle$ is the Euclidean inner product on $\mathbb{R}^3$. Again, if $w=a\alpha_u+b\alpha_v$ then
\begin{equation}\label{AIffcoord}
\IIIa_\alpha(w)=\left\langle D\nu(p;w),D\xi(p;w)\right\rangle = la^2+2mab+nb^2,
\end{equation}
where
\begin{equation}\label{AIffcoeff}
l=\left\langle \nu_u,\xi_u\right\rangle,\quad m=\left\langle \nu_u,\xi_v\right\rangle=\left\langle \nu_v,\xi_u\right\rangle,\quad n=\left\langle \nu_v,\xi_v\right\rangle.
\end{equation}
Using~\eqref{Der_norm_afin}, \eqref{bij1} and~\eqref{AIffcoeff} we obtain
\begin{align*}\label{eqbij1and3}
-l & = b_{11}g_{11}+b_{21}g_{12}, \\
-m & = b_{11}g_{12}+b_{21}g_{22}, \\
-m & = b_{12}g_{11}+b_{22}g_{22}, \\
-n & = b_{12}g_{12}+b_{22}g_{22}. 
\end{align*}
Thus are obtained the coefficients $b_{ij}$ in terms of the affine first and third fundamental forms as follows
\begin{equation}\label{eq_bijI_III}
\left[
\begin{array}{cc}
b_{11} & b_{21} \\
b_{12} & b_{22}
\end{array}
\right] = -\left|g_{11}g_{22}-g_{12}^2\right|^{-1}
\left[ 
\begin{array}{cc}
l & m \\
m & n
\end{array}
\right]
\left[
\begin{array}{cc}
g_{22}  & -g_{12} \\
-g_{12} & g_{11}
\end{array}
\right] 
\end{equation}
\begin{lemma}
Given $w\in T_pS$, 
\begin{equation*}
\IIIa_\alpha(w)=-\Ia_\alpha(w,D\xi(p;w)).
\end{equation*}
\end{lemma}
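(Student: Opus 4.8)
The plan is to prove the identity by a direct computation in the coordinate frame $\{\alpha_u,\alpha_v\}$, expanding both sides as quadratic forms in the components of $w$ and then matching coefficients by means of the linear system relating $(l,m,n)$, the shape-operator entries $(b_{ij})$, and the metric coefficients $(g_{ij})$ displayed just before \eqref{eq_bijI_III}. First I would write $w=a\alpha_u+b\alpha_v$, so that by \eqref{AIffcoord} the left-hand side reads $\IIIa_\alpha(w)=la^2+2mab+nb^2$. The point to keep in mind on the right-hand side is that $D\xi(w)$ takes values in $T_pS$, so it is a legitimate second argument for the symmetric bilinear form $\Ia_\alpha$. Using \eqref{Der_norm_afin},
\[
D\xi(w)=a\xi_u+b\xi_v=(ab_{11}+bb_{12})\alpha_u+(ab_{21}+bb_{22})\alpha_v,
\]
so that $D\xi(w)=A\alpha_u+B\alpha_v$ with $A=ab_{11}+bb_{12}$ and $B=ab_{21}+bb_{22}$.

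Next, recalling that $\Ia_\alpha$ is the symmetric bilinear form with $\Ia_\alpha(\alpha_u,\alpha_u)=g_{11}$, $\Ia_\alpha(\alpha_u,\alpha_v)=g_{12}$ and $\Ia_\alpha(\alpha_v,\alpha_v)=g_{22}$, I would expand
\[
\Ia_\alpha\bigl(w,D\xi(w)\bigr)=aA\,g_{11}+(aB+bA)\,g_{12}+bB\,g_{22}
\]
and substitute the expressions for $A$ and $B$. Collecting the coefficients of $a^2$, $ab$ and $b^2$ produces $b_{11}g_{11}+b_{21}g_{12}$, the sum $(b_{11}g_{12}+b_{21}g_{22})+(b_{12}g_{11}+b_{22}g_{12})$, and $b_{12}g_{12}+b_{22}g_{22}$, respectively. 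By the four relations preceding \eqref{eq_bijI_III} these equal exactly $-l$, $-2m$, and $-n$; here the symmetry $m=\langle\nu_u,\xi_v\rangle=\langle\nu_v,\xi_u\rangle$ from \eqref{AIffcoeff} is precisely what forces the two middle contributions (each equal to $-m$) to combine. Hence $\Ia_\alpha(w,D\xi(w))=-(la^2+2mab+nb^2)=-\IIIa_\alpha(w)$, which is the claim.

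The computation is routine once the bookkeeping is fixed; the only genuine subtlety, and the step I would watch most carefully, is the correct reading of the index conventions in \eqref{Der_norm_afin} (note the transpose, which places $b_{21}$ in the expansion of $\xi_u$ and $b_{12}$ in that of $\xi_v$) together with the consistent interpretation of $D\xi(w)$ as a tangent vector to be fed into $\Ia_\alpha$. As a more conceptual alternative, one can avoid the index manipulation altogether: differentiating the conormality relations $\langle\nu,\alpha_u\rangle=\langle\nu,\alpha_v\rangle=0$ and using \eqref{eq_conorm} together with the formulas for $g_{ij}$ gives $\langle\nu_i,\alpha_j\rangle=-g_{ij}$, hence the coordinate-free identity $\langle D\nu(w_1),D\alpha(w_2)\rangle=-\Ia_\alpha(w_1,w_2)$. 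Since $D\xi\subset TS$, the $\R^3$-vector $D\xi(w)$ is the $D\alpha$-image of the tangent vector $D\xi(w)$, and the statement then follows immediately from the definition $\IIIa_\alpha(w)=\langle D\nu(w),D\xi(w)\rangle$.
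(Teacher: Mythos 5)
Your proof is correct and takes essentially the same route as the paper's: the paper likewise expands $\Ia_\alpha(w,D\xi(p;w))$ in the frame $\{\alpha_u,\alpha_v\}$ (parametrizing $w$ by $u'(0),v'(0)$ along a curve instead of your $a,b$) and matches the quadratic-form coefficients against the four relations preceding \eqref{eq_bijI_III}; your bookkeeping is in fact cleaner, since the paper's displayed relations and proof contain small typos (e.g.\ $b_{12}g_{12}+b_{22}g_{12}$ where $b_{12}g_{12}+b_{22}g_{22}$ is meant) that your computation implicitly corrects. The coordinate-free alternative in your closing paragraph, via $\langle D\nu(w_1),D\alpha(w_2)\rangle=-\Ia_\alpha(w_1,w_2)$ and $D\xi\subset TS$, is also valid and does not appear in the paper, but it is not needed for the verdict.
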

\begin{proof}
Let $\gamma\colon I\rightarrow S$ a smooth curve on such that $\gamma(0)=p$ and $\gamma'(0)=w$. With these conditions $D\xi(p,w)=\mfrac{d}{dt}(\xi\circ\gamma(t))|_{t=0}$ and using~\eqref{eq_bijI_III} we have
\begin{align*}
  \Ia_\alpha(w,D\xi(p,w))
  &= \Ia_\alpha\bigl(\alpha_uu'(0)+\alpha_vv'(0),\xi_uu'(0)+\xi_vv'(0)\bigr), \\
  &= \Ia_\alpha(\xi_u,\alpha_u)(u'(0))^2 
    + \Ia_\alpha(\xi_v,\alpha_v)(v'(0))^2  \\
  &\hspace{8em} + ( \Ia_\alpha(\xi_u,\alpha_v)+\Ia _\alpha(\xi_v,\alpha_u) )u'(0)v'(0) , \\
  &= (b_{11}g_{11}+b_{21}g_{12})(u'(0))^2 + (b_{12}g_{12}+b_{22}g_{12})(v'(0))^2 \\
  &\hspace{5em} + \bigl( b_{12}g_{11} +(b_{11}+b_{22})g_{12}+b_{21}g_{22} \bigr) u'(0)v'(0), \\
  &= -\left(l(u'(0))^2+2mu'(0)v'(0)+n(v'(0))^2\right), \\
  &= -\IIIa_\alpha(w),
\end{align*}
which completes the proof.
\end{proof}
 Affine curvature lines and affine asymptotic lines can be defined analogously to the Euclidean case. In~\cite{Buchin}, the author studied the conditions for which the affine curvature lines and Euclidean curvature lines are the same. In~\cite{BGC2020}, the authors studied affine curvature lines near to affine umbilic points and near  other singularities of the principal fields of affine principal directions. In particular, it is shown in~\cite{BGC2020} that near singularities, the principal affine field of affine directions has 17 generic different topological models, which is very different from the Euclidean case which only has three (Figure~\ref{fig2}) generic models. In~\cite{BG2018} the authors consider closed affine curvature lines.

The \emph{Gauss--Kronecker curvature} and the \emph{affine mean curvature} are defined, respectively, as 
\begin{equation*}
\mathcal{K}_{\alpha}^{\mathrm{aff}}=\det D\xi=b_{11}b_{22}-b_{12}b_{21},\quad\text{and}\quad\mathcal{H}_{\alpha}^{\mathrm{aff}}=-\frac{1}{2}\left(b_{11}+b_{22}\right).
\end{equation*}
By direct calculations using~\eqref{eq_bijI_III} we have
\begin{equation}\label{eqGKCandH}
  \mathcal{K}_{\alpha}^{\mathrm{aff}}=\frac{l\,n-m^2}{g_{11}g_{22}-g_{12}^2},\quad\text{and}\quad
  \mathcal{H}_{\alpha}^{\mathrm{aff}}=\frac{l\,g_{22}-2m\,g_{12}+n\,g_{11}}{g_{11}g_{22}-g_{12}^2}.
\end{equation}
\begin{definition}[\cite{Davis2008,Davis2009}]\label{def_elip_parab_hyp_aff}
A surface point is called an \emph{affine elliptic}, \emph{affine parabolic} or \emph{affine hyperbolic point} if the Gauss--Kronecker curvature is positive, equal to zero or negative at that point respectively.
\end{definition}
A definition of the affine normal curvature at a point $p\in S$ was provided in~\cite{Davis2008} (see also~\cite{Davis2009}) as follows
\begin{definition}\label{def_cur_nor_af}
The \emph{affine normal curvature} of $S$ at $p$ is given by
\begin{equation*}
k_n^{\mathrm{aff}}(p;w)=\frac{\IIIa_\alpha(p;w)}{\Ia_\alpha(p;w)},
\end{equation*}
where $w\in T_pS$ is not an asymptotic direction.
\end{definition}

Now we introduce two central notions for the study undertaken in this work.
\begin{definition}[\cite{Davis2008}]\label{def_dir_lin_asymo_af}
 Let $\alpha\colon S\to\mathbb{R}^3$ be a smooth immersion of a surface $S$ in $\R^3$.
\begin{enumerate}
\item A vector $w\in T_pS$ is an  \emph{affine asymptotic direction} (or a \emph{Blaschke asymptotic direction}) of the immersion $\alpha$ at $p\in S$ if $k_n^{\mathrm{aff}}(p;w)$ vanishes.
\item A regular curve ${c\colon I\rightarrow S}$ whose tangent line is an affine asymptotic direction is called an  \emph{affine asymptotic line} (or a \emph{Blaschke asymptotic line}) of $S$.
\end{enumerate}
 \end{definition}

\begin{proposition}[\cite{Davis2008,Davis2009}]\label{prop_dirasym}
We have $k_n^{\mathrm{aff}}(p;w)=0$ if, and only if $D^2\xi(p;w)\in T_pS$.
\end{proposition}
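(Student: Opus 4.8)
The plan is to characterize tangency of $D^2\xi(p;w)$ to $T_pS$ by pairing against the co-normal $\nu$. By \eqref{eq_conorm} the vector $\nu$ is a nonzero scalar multiple of the Euclidean unit normal $N_\alpha$, so a vector $V\in\mathbb{R}^3$ lies in $T_pS$ if and only if $\langle\nu,V\rangle=0$. Hence it suffices to prove the single identity $\langle\nu,D^2\xi(p;w)\rangle=-\IIIa_\alpha(p;w)$. Granting this, the proposition follows immediately: $D^2\xi(p;w)\in T_pS$ iff $\IIIa_\alpha(p;w)=0$ iff $k_n^{\mathrm{aff}}(p;w)=\IIIa_\alpha(p;w)/\Ia_\alpha(p;w)=0$, where the last step uses $\Ia_\alpha(p;w)\neq0$ because $w$ is not an asymptotic direction of $\alpha$.

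To make $D^2\xi(p;w)$ concrete I would fix the chart $(u,v)$, write $w=a\alpha_u+b\alpha_v$, and use the straight-line representative $\gamma(t)=(u_0+at,\,v_0+bt)$, so that $D^2\xi(p;w)=\tfrac{d^2}{dt^2}(\xi\circ\gamma)\big|_{t=0}=a^2\xi_{uu}+2ab\,\xi_{uv}+b^2\xi_{vv}$. The computation then reduces to evaluating the three inner products $\langle\nu,\xi_{uu}\rangle$, $\langle\nu,\xi_{uv}\rangle$, and $\langle\nu,\xi_{vv}\rangle$, which I would extract directly from the normalization relations \eqref{eq_rel_noraff}. Differentiating $\langle\nu,\xi\rangle=1$ in $u$ and $v$ and using $\langle\xi,\nu_u\rangle=\langle\xi,\nu_v\rangle=0$ gives $\langle\nu,\xi_u\rangle=\langle\nu,\xi_v\rangle=0$. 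Differentiating these once more and recalling the definition \eqref{AIffcoeff} of $l,m,n$ yields
\begin{align*}
\langle\nu,\xi_{uu}\rangle &= -\langle\nu_u,\xi_u\rangle = -l,\\
\langle\nu,\xi_{uv}\rangle &= -\langle\nu_v,\xi_u\rangle = -m,\\
\langle\nu,\xi_{vv}\rangle &= -\langle\nu_v,\xi_v\rangle = -n.
\end{align*}
Substituting into the expansion of $D^2\xi(p;w)$ and comparing with \eqref{AIffcoord} gives $\langle\nu,D^2\xi(p;w)\rangle=-(la^2+2mab+nb^2)=-\IIIa_\alpha(p;w)$, which is exactly the identity required.

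The only genuine subtlety — and the step I expect to need the most care — is the well-definedness of $D^2\xi(p;w)$. The naive second derivative $\tfrac{d^2}{dt^2}(\xi\circ\gamma)|_{t=0}$ for an arbitrary curve $\gamma$ with $\gamma'(0)=w$ differs from the straight-line value by the term $D\xi(p;\gamma''(0))$, which by \eqref{Der_norm_afin} is tangent to $S$ and therefore annihilated by $\langle\nu,\cdot\rangle$. Consequently the $\nu$-component computed above is independent of the chosen representative curve, so the quantity $\langle\nu,D^2\xi(p;w)\rangle$ — and hence the tangency condition in the statement — is intrinsic and unambiguous. Once this point is addressed, the remainder is just the direct differentiation of the identities \eqref{eq_rel_noraff} carried out above.
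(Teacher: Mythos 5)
Your proof is correct and follows essentially the same route as the paper: both reduce the tangency condition to the single identity $\left\langle \nu, D^2\xi(p;w)\right\rangle=-\IIIa_\alpha(p;w)$ obtained by differentiating the normalization relations \eqref{eq_rel_noraff}, the only difference being that the paper does this in one coordinate-free step (differentiating $\left\langle D\xi(p;w),\nu\right\rangle=0$ along $w$ and recognizing $\left\langle D\xi(p;w),D\nu(p;w)\right\rangle=\IIIa_\alpha(p;w)$ from \eqref{AIIIff}), while you unwind the same computation in coordinates through $l,m,n$ via \eqref{AIffcoeff} and \eqref{AIffcoord}. Your additional verification that the tangency of $D^2\xi(p;w)$ does not depend on the representative curve, using $D\xi\subset TS$ to kill the $D\xi(p;\gamma''(0))$ term under $\left\langle\nu,\cdot\right\rangle$, is a legitimate point of care that the paper leaves implicit.
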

\begin{proof}
By~\eqref{eq_rel_noraff} we have $\left\langle D\xi(p;w),\nu\right\rangle=0$, thus 
\begin{equation}
 \left\langle D^2\xi(p;w),\nu\right\rangle = -\left\langle D\xi(p;w),D\nu(p;w)\right\rangle=-\IIIa_\alpha(p,w),
\end{equation}
and the result follows by Definition~\ref{def_dir_lin_asymo_af}~\itm{i}.
\end{proof}
\begin{corollary}\label{col_linasym}
  A regular curve ${c\colon I\rightarrow S}$ is an affine asymptotic line of $S$ if, and only if $\mfrac{d^2}{dt^2}\xi(t)\in T_{c(t)}S$, for all $t\in I$, where $\xi(t)=\xi|_{c(t)}$.
\end{corollary}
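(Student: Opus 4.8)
The plan is to apply Proposition~\ref{prop_dirasym} pointwise along $c$, together with part~\itm{ii} of Definition~\ref{def_dir_lin_asymo_af}, so that the entire content of the statement reduces to reconciling the ordinary second derivative $\mfrac{d^2}{dt^2}\xi(t)$ of the composition $\xi\circ c$ with the second directional derivative $D^2\xi(c(t);c'(t))$ appearing in the proposition. Fix $t\in I$ and write $p=c(t)$ and $w=c'(t)$. First I would differentiate once: by the chain rule $\mfrac{d}{dt}\xi(t)=D\xi(p;w)$, and since the affine normal field satisfies $D\xi\subset TS$, this first derivative already lies in $T_pS$ for every $t$. Thus the statement is genuinely about the second derivative only.

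Next I would write $c(t)=(u(t),v(t))$ in a local chart and differentiate $\xi(t)=\xi(u(t),v(t))$ twice, obtaining
\begin{equation*}
\mfrac{d^2}{dt^2}\xi(t)=\bigl(\xi_{uu}(u')^2+2\xi_{uv}u'v'+\xi_{vv}(v')^2\bigr)+\bigl(\xi_u u''+\xi_v v''\bigr).
\end{equation*}
The first parenthesis is the second directional derivative $D^2\xi(p;w)$, while the second parenthesis equals $D\xi(p;c''(t))\in T_pS$, once more because $D\xi\subset TS$. Consequently $\mfrac{d^2}{dt^2}\xi(t)$ and $D^2\xi(p;w)$ differ by a tangent vector, so one belongs to $T_pS$ if and only if the other does.

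Finally I would invoke Proposition~\ref{prop_dirasym}: the membership $D^2\xi(p;w)\in T_pS$ is equivalent to $k_n^{\mathrm{aff}}(p;w)=0$, that is, to $w=c'(t)$ being an affine asymptotic direction. By Definition~\ref{def_dir_lin_asymo_af}~\itm{ii}, the curve $c$ is an affine asymptotic line exactly when this holds at every $t\in I$; quantifying the pointwise equivalence over $t$ then yields the claim.

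The only genuinely delicate point is the middle step: identifying the intrinsic second directional derivative $D^2\xi(p;w)$ inside the literal second derivative and verifying that the leftover term $D\xi(p;c''(t))$ is tangential, so that the reparametrization-dependent piece does not affect membership in $T_pS$. An equivalent and perhaps cleaner route, which sidesteps any ambiguity in the meaning of $D^2\xi(p;w)$, is to pair directly with the conormal: from $\langle\mfrac{d}{dt}\xi(t),\nu\rangle=0$ one differentiates once more to get $\langle\mfrac{d^2}{dt^2}\xi(t),\nu\rangle=-\langle D\xi(p;w),D\nu(p;w)\rangle=-\IIIa_\alpha(p;w)$, and since $\nu$ spans the normal line to $T_pS$, the vanishing of this quantity is equivalent both to $\mfrac{d^2}{dt^2}\xi(t)\in T_pS$ and to $c'(t)$ being an affine asymptotic direction.
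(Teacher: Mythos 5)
Your argument is correct and is essentially the paper's: the corollary is stated there without a separate proof, as an immediate pointwise application of Proposition~\ref{prop_dirasym} (with $w=c'(t)$) combined with Definition~\ref{def_dir_lin_asymo_af}~\itm{ii}, which is exactly your scheme. Your chain-rule decomposition showing that the reparametrization term $D\xi\bigl(c(t);c''(t)\bigr)$ is tangential --- so that $\mfrac{d^2}{dt^2}\xi(t)$ and $D^2\xi\bigl(c(t);c'(t)\bigr)$ agree modulo $T_{c(t)}S$ --- together with your alternative pairing $\bigl\langle \mfrac{d^2}{dt^2}\xi(t),\nu\bigr\rangle=-\IIIa_\alpha\bigl(c(t);c'(t)\bigr)$, merely makes explicit the bookkeeping the paper leaves to the reader (the latter being the same computation used in the paper's proof of the proposition itself).
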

\begin{remark}\label{remedb}
By Definition~\ref{def_cur_nor_af}, $w=\alpha_u(p) u'(0)+\alpha_v(p) v'(0)\in T_pS$ is an affine asymptotic line of $S$ if, and only if $l(p)(u')^2+2m(p)u'v'+n(p)(v')^2=0$, which corresponds to a binary differential equation.
\end{remark}

We consider now the equation of the affine asymptotic lines.
\begin{lemma}\label{lm_elaa}
Let $\gamma(t)=\alpha(u(t),v(t))$ a smooth curve on $S$. Then, $\gamma$ is an affine asymptotic line of $S$ if, and only if, $\gamma$ is a solution of the binary differential equation
\begin{equation}\label{eqlaa}
l(\gamma(t))\left(u'(t)\right)^2+2m(\gamma(t))u'(t)v'(t)+n(\gamma(t))\left(v'(t)\right)^2=0.
\end{equation}
\end{lemma}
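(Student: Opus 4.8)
The plan is to reduce the claim to the already-established pointwise characterization of affine asymptotic directions from Remark~\ref{remedb}, lifting it to the level of curves. The statement of Lemma~\ref{lm_elaa} asserts exactly that a parametrized curve $\gamma(t)=\alpha(u(t),v(t))$ is an affine asymptotic line if and only if its velocity satisfies the binary differential equation \eqref{eqlaa} at every parameter value $t$. By Definition~\ref{def_dir_lin_asymo_af}~\itm{ii}, $\gamma$ is an affine asymptotic line precisely when, for each $t$, its tangent direction $\gamma'(t)\in T_{\gamma(t)}S$ is an affine asymptotic direction at the point $\gamma(t)$.

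First I would fix an arbitrary $t\in I$ and set $p=\gamma(t)$, $w=\gamma'(t)$. The chain rule gives
\begin{equation*}
\gamma'(t)=\alpha_u\bigl(u(t),v(t)\bigr)\,u'(t)+\alpha_v\bigl(u(t),v(t)\bigr)\,v'(t),
\end{equation*}
so that $w=\alpha_u(p)\,u'(t)+\alpha_v(p)\,v'(t)$ is the decomposition of the tangent vector in the basis $\{\alpha_u(p),\alpha_v(p)\}$. The coordinates of $w$ in this basis are therefore $a=u'(t)$ and $b=v'(t)$. Substituting into the coordinate expression \eqref{AIffcoord} for the affine third fundamental form yields
\begin{equation*}
\IIIa_\alpha(p;w)=l(p)\,\bigl(u'(t)\bigr)^2+2m(p)\,u'(t)v'(t)+n(p)\,\bigl(v'(t)\bigr)^2,
\end{equation*}
where the coefficients $l,m,n$ are evaluated at $p=\gamma(t)$. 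This is exactly the left-hand side of \eqref{eqlaa}.

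Next I would invoke Definition~\ref{def_dir_lin_asymo_af}~\itm{i} together with Definition~\ref{def_cur_nor_af}: the vector $w$ is an affine asymptotic direction at $p$ if and only if $k_n^{\mathrm{aff}}(p;w)=\IIIa_\alpha(p;w)/\Ia_\alpha(p;w)$ vanishes, which (since $\gamma$ is a regular curve, so $w\neq 0$ and the denominator is nonzero away from asymptotic directions, the numerator controlling the vanishing) is equivalent to $\IIIa_\alpha(p;w)=0$. This matches precisely the formulation already recorded in Remark~\ref{remedb}. Combining the two displays, the condition that $\gamma'(t)$ be an affine asymptotic direction at $\gamma(t)$ is equivalent to
\begin{equation*}
l(\gamma(t))\,\bigl(u'(t)\bigr)^2+2m(\gamma(t))\,u'(t)v'(t)+n(\gamma(t))\,\bigl(v'(t)\bigr)^2=0,
\end{equation*}
for this particular $t$.

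Finally, since $t\in I$ was arbitrary, quantifying over all $t$ gives the equivalence between $\gamma$ being an affine asymptotic line (the tangent being an affine asymptotic direction at every point) and $\gamma$ satisfying the binary differential equation \eqref{eqlaa} identically on $I$, which completes the argument. The proof is essentially a direct translation of the pointwise algebraic condition into a differential equation via the chain rule, so there is no serious obstacle; the only point requiring mild care is the regularity hypothesis on $\gamma$, which guarantees $w\neq 0$ so that vanishing of the normal curvature is genuinely equivalent to vanishing of $\IIIa_\alpha$ rather than an artifact of a degenerate tangent vector.
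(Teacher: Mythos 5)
Your proof is correct and takes essentially the same route as the paper: the paper's own proof is a one-line citation of Remark~\ref{remedb} (the pointwise BDE characterization of affine asymptotic directions) together with Corollary~\ref{col_linasym}, and your argument is precisely the chain-rule unwinding of that remark, applied at each parameter value and combined with Definition~\ref{def_dir_lin_asymo_af}~\itm{ii}. The only (harmless) difference is that you bypass Corollary~\ref{col_linasym}, i.e.\ the characterization via $\frac{d^2}{dt^2}\xi(t)\in T_{c(t)}S$, and argue directly from Definitions~\ref{def_cur_nor_af} and~\ref{def_dir_lin_asymo_af} via the coordinate expression~\eqref{AIffcoord}, which is if anything more direct.
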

\begin{proof}
The result follows from Corollary~\ref{col_linasym} and Remark~\ref{remedb}.
\end{proof}
%
%
\subsection{Implicit differential equations}\label{sside}
In this section we consider the basic notions of the classical theory of differential equations not solved for derivatives. In this context, we will use tools of the theory of singularities and the geometry of space of jets.
\begin{definition}\label{defIDE}
An \textit{Implicit Differential Equation (IDE)} is an equation of the form
\begin{equation}\label{eqedi}
F(x,y,p)=0,\quad\text{where}\quad p=\frac{dy}{dx}.
\end{equation}
\end{definition}
The direction of the $p$-axis in the space of jets is called the \textit{vertical direction}. In the space of jets,  condition~\eqref{eqedi} defines a surface which we denote by $M$. The derivatives of the smooth function $F$ determine a multivalued direction field. Implicit differential equations have been studied by many authors, see for example~\cite{Arnold1983,Davydov}.
%
%
\subsubsection{Binary Differential Equations}\label{bde}
A special case of the family given in~\eqref{eqedi} are the so-called \textit{Binary Differential Equations (BDEs)}, which are equations of the form
\begin{equation}\label{eqedb}
F(x,y,p)=a(x,y)+2b(x,y)p+c(x,y)p^2=0,
\end{equation}
where $a,b,c$ are smooth functions. BDEs arise naturally in differential geometry, for example, lines of principal curvature, asymptotic lines, and characteristic lines are defined as the integral curves of their respective binary differential equations. BDEs have been well studied due to their interest for the qualitative theory of ordinary differential equations, and because of their role in singularity theory, among other applications. For more details see~\cite{Arnold1983,Arnold1985,BGC2020,BG2018,Bruce1989,Bruce1995,Dav1985,Davydov,Garcia1999,Garcia2009,GS-1982,Izumiya2016,Buchin}.\\

\begin{definition}[\cite{Arnold1983}]
  Let $M$ be the surface defined by~\eqref{eqedb} and consider the projection 
\begin{equation}
\pi\colon M\rightarrow\mathbb{R}^2,\qquad\pi(x,y,p)=(x,y).
\end{equation}
in the vertical direction. Associated to $\pi$ we have the following terminology:
\begin{enumerate}
\item A point of the surface $M$ is said to be \textit{regular} if it is not a critical point of the mapping $\pi$.
\item The set of critical values of the mapping $\pi$ is called the \textit{discriminant} curve.
\end{enumerate}
\end{definition}

Note that 
\begin{equation}
F(x,y,p)=0,\quad\text{implies}\quad DF=\frac{\partial F}{\partial x}+\frac{\partial F}{\partial y}\frac{dy}{dx}+\frac{\partial F}{\partial p}\frac{dp}{dx}=0.
\end{equation}
Thus the vector field given by
\begin{equation}\label{eqLieCartanField}
X(x,y,p)=\left(\dot{x},\dot{y},\dot{p}\right)=\left(\frac{\partial F}{\partial p},p\frac{\partial F}{\partial p},-\left(\frac{\partial F}{\partial x}+p\frac{\partial F}{\partial p}\right)\right)
\end{equation}
is tangent to $M$ at point $(x,y,p)$. It is well known that the projection onto the $xy$-plane turns the integral curves of~\eqref{eqedb} on $M$ restricted to a neighborhood of a regular point, into precisely the integral curves of the equation $\mfrac{dy}{dx}=v(x,y)$ restricted to a neighborhood of the projection of this point. 
\begin{definition}[\cite{Arnold1983}]\label{defsingset}
A point of the surface $M$ is said to be \textit{singular} for equation~\eqref{eqedb} if the projection $\pi$ of the surface to the plane is not a local diffeomorphism in the neighborhood of this point. The set of singular points of $F=0$ is called the \textit{criminant} of the equation.
\end{definition}
By Definition~\ref{defsingset} and the Implicit Function Theorem, the singular points on the surface $F=0$ are characterized by $\mfrac{\partial F}{\partial p}=0$.  
\begin{definition}[\cite{Arnold1983}]\label{defdiscrim}
The projection of the criminant onto the $xy$-plane parallel to the $p$-direction is called the \textit{discriminant curve}.
\end{definition}
Figure~\ref{fig3} shows the projection $\pi$ of $M$ onto the $xy$-plane, the criminant and discriminant sets, and the local behavior of the vector field $X$ in a neighborhood of a singular point.
\begin{figure}[htb!]
	\centering
	\includegraphics[width=.9\textwidth,clip]{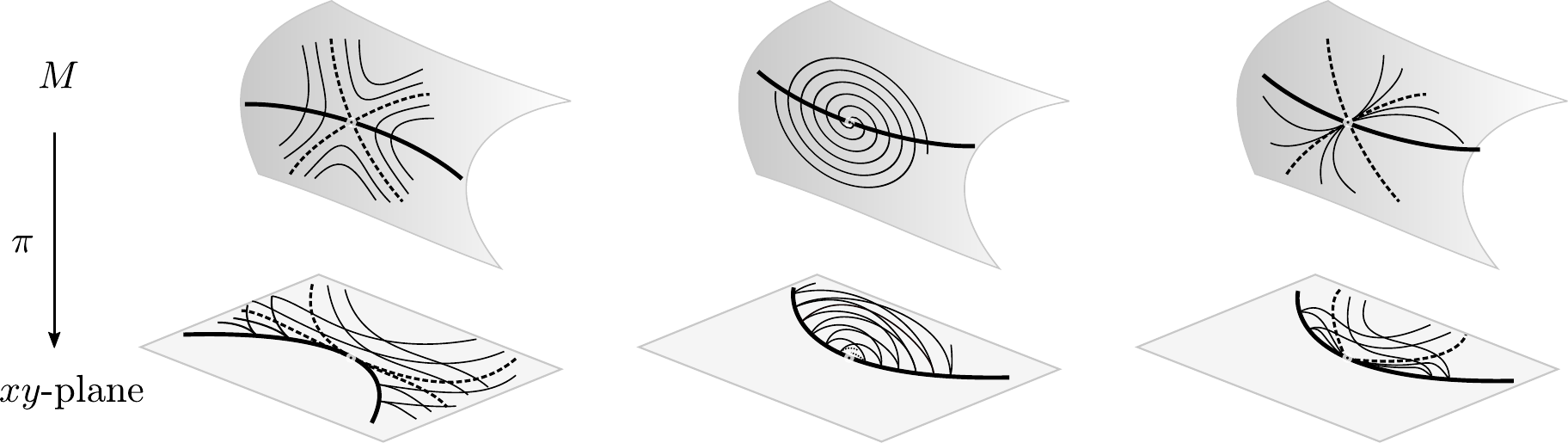}
	\caption{\small Integral curves of the vector field $X$ on $M$ with their respective projections to the $xy$-plane in a neighborhood of a singular point (see \cite{Arnold1983,Dav1985,Davydov}).}
	\label{fig3}
\end{figure}

%
%
\section{The co-normal surface}\label{sec:conormal}
In this section we consider the relation between the classical geometry of the co-normal surface, which we will define below and the affine differential geometry of the inital surface. 
\begin{definition}[{\cite[Chapter 2]{Nomizu1994}}]\label{defconsup}
Let $S$ a smooth surface and $\nu$ the co-normal vector on $S$ as defined in \eqref{eq_conorm}. We define the \textsl{Conormal Map} $\nu\colon S\rightarrow\mathbb{R}^3$ as the application defined by $p\mapsto\nu(p)$. We called the \textsl{Co-normal Surface} the set $\nu\left(S\,\backslash\,\mathbb{P}_{\alpha}\right)$ and denote it by $S^{\nu}$.
\end{definition}
\begin{proposition}[\cite{Nomizu1994}]\label{prop_con_sup}
The co-normal map $\nu$ is an immersion.
\end{proposition}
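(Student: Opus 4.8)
The plan is to show that the differential $D\nu(p)$ has rank two at every point $p$ in the domain of $\nu$, which is $S\setminus\mathbb{P}_\alpha$ (on the parabolic set $K_\alpha$ vanishes, so the factor $|K_\alpha|^{-1/4}$ appearing in \eqref{eq_conorm} is not defined there). Since $D\nu$ carries the coordinate basis $\{\partial_u,\partial_v\}$ of $T_pS$ to the pair $\{\nu_u,\nu_v\}$, the map $\nu$ is an immersion at $p$ precisely when $\nu_u$ and $\nu_v$ are linearly independent, that is, when $\nu_u\wedge\nu_v\neq 0$.

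The quickest route is to use that this wedge has already been computed in \eqref{eq:na}, which I would rewrite as
\[
\nu_u\wedge\nu_v=\left|LN-M^2\right|^{\frac{1}{4}}\,\xi.
\]
I would first record that on $S\setminus\mathbb{P}_\alpha$ one has $K_\alpha\neq 0$, hence $LN-M^2\neq 0$, so the scalar $\left|LN-M^2\right|^{1/4}$ is a strictly positive real number. Next I would note that $\xi\neq 0$, which is immediate from the normalization $\langle\nu,\xi\rangle=1$ in \eqref{eq_rel_noraff} (equivalently, from the fact that $\{\alpha_u,\alpha_v,\xi\}$ is a frame, so $\xi$ is in particular nonzero and transversal to $T_pS$). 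Combining these, the right-hand side is a nonzero multiple of $\xi$, whence $\nu_u\wedge\nu_v\neq 0$ and the immersion property follows.

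I expect the only subtlety here to be a matter of bookkeeping rather than a genuine obstacle: one must keep track of the domain $S\setminus\mathbb{P}_\alpha$, and one should make sure the appeal to \eqref{eq:na} is not logically circular with its own derivation. To remove any such concern, I would, if needed, argue directly from $\nu=|K_\alpha|^{-1/4}N_\alpha$. Writing $f=|K_\alpha|^{-1/4}$ and expanding,
\[
\nu_u\wedge\nu_v=f^2\,(N_u\wedge N_v)+f f_u\,(N_\alpha\wedge N_v)+f f_v\,(N_u\wedge N_\alpha),
\]
the last two terms are tangent to $S$ (cross products of the normal with tangent vectors), while $N_u\wedge N_v=K_\alpha\,(\alpha_u\wedge\alpha_v)$ is a nonzero multiple of $N_\alpha$ precisely because $K_\alpha\neq 0$ and $\alpha$ is an immersion. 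Hence $\nu_u\wedge\nu_v$ has a nonvanishing normal component, so it is nonzero, and again $\nu$ is an immersion.
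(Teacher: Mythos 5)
Your proof is correct, but it is considerably more than what the paper records: the paper's entire proof is the single sentence that the claim ``is directly obtained by the relation in \eqref{eq_rel_noraff}'', leaving the reader to reconstruct why the frame relations force $\nu_u$ and $\nu_v$ to be independent. Your first argument is essentially a careful completion of that hint: reading \eqref{eq:na} backwards as $\nu_u\wedge\nu_v=\left|LN-M^2\right|^{1/4}\xi$ and using $\langle\nu,\xi\rangle=1$ to see $\xi\neq 0$ does the job, and you are right to flag the circularity risk --- the derivation of \eqref{eq:na} (and indeed the uniqueness of $\xi$ in \eqref{eq_rel_noraff}) tacitly presupposes that $\nu_u,\nu_v$ span a plane, so by itself this route leans on statements whose proofs are upstream of, but logically entangled with, the proposition. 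Your second argument is the genuinely different and self-contained route: working directly from $\nu=|K_\alpha|^{-1/4}N_\alpha$, splitting $\nu_u\wedge\nu_v$ into tangential terms plus $f^2 N_u\wedge N_v=f^2K_\alpha\,(\alpha_u\wedge\alpha_v)$, and concluding from $K_\alpha\neq 0$ that the normal component survives. This Euclidean computation uses nothing about the affine frame at all, so it breaks any circularity and makes transparent exactly where the hypothesis $p\notin\mathbb{P}_\alpha$ enters; what it gives up is the intrinsic affine-geometric flavor of the paper's (and Nomizu--Sasaki's) argument, where nondegeneracy of the Berwald--Blaschke metric is the conceptual reason behind the immersion property. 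Either of your two arguments would serve as a complete replacement for the paper's one-line proof.
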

\begin{proof}
It is directly obtained by the relation in \eqref{eq_rel_noraff}.
\end{proof}
As a consequence of the Proposition~\ref{prop_con_sup}, the co-normal surface $S^{\nu}$ in locally a regular surface. The Theorems~\ref{th_rel_geo} and \ref{th_rel_geo_cupGauss} give a relationship between the $S^{\nu}$ differential geometry and the related affine differential geometry of $S$.
\begin{theorem}\label{th_rel_geo}
Let $\alpha\colon U\subset\mathbb{R}^2\rightarrow S$, $(u,v)\mapsto\alpha(u,v)$, a local parametrization of a smooth surface $S$ and  $S^{\nu}$ their corresponding co-normal surface. Consider the regular plane curve $\gamma\colon I\subset\rightarrow U$ in the domain of $\alpha$, $\gamma(t)=\left(u(t),v(t)\right)$. Then:
\begin{enumerate}
\item The curve $\beta=\alpha\circ\gamma$ is an affine asymptotic line of $S$ if and only if the curve $\eta=\nu\circ\gamma$ is an asymptotic line of $S^{\nu}$.
\item The point $p\in S$ is an affine parabolic point if and only if the point $\nu(p)\in S^{\nu}$ is a parabolic point.
\end{enumerate}
\end{theorem}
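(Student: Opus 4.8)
The plan is to exploit the two fundamental identities already recorded in the preliminaries: the formula \eqref{eq:na} expressing the affine normal $\xi$ as $\xi = |LN-M^2|^{-1/4}(\nu_u\wedge\nu_v)$, which says precisely that $\xi$ is (a scalar multiple of) the Euclidean normal to the co-normal surface $S^\nu$, together with Proposition~\ref{prop_dirasym}, which characterizes affine asymptotic directions of $S$ by the condition $D^2\xi(p;w)\in T_pS$. The central observation is that $\xi$ plays a double role: it is the affine normal of $S$, and simultaneously the (unnormalized) Euclidean normal of the co-normal surface, since $\langle\xi,\nu_u\rangle=\langle\xi,\nu_v\rangle=0$ by \eqref{eq_rel_noraff}. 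This is exactly the bridge that converts affine data on $S$ into Euclidean data on $S^\nu$.

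For part \itm{i}, first I would set up the two relevant bilinear forms. On the $S$ side, by Lemma~\ref{lm_elaa} the curve $\beta=\alpha\circ\gamma$ is an affine asymptotic line iff $\gamma$ solves the BDE \eqref{eqlaa} with coefficients $l=\langle\nu_u,\xi_u\rangle$, $m=\langle\nu_u,\xi_v\rangle=\langle\nu_v,\xi_u\rangle$, $n=\langle\nu_v,\xi_v\rangle$. On the $S^\nu$ side, $\eta=\nu\circ\gamma$ is an asymptotic line iff the Euclidean second fundamental form of the immersion $\nu$ vanishes along $\gamma$, i.e. $-\langle DN_\nu(p;w),D\nu(p;w)\rangle=0$ where $N_\nu$ is the unit normal of $S^\nu$. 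The key step is to compute this second fundamental form directly. Since $N_\nu$ is parallel to $\xi$ by \eqref{eq:na}, write $N_\nu=|LN-M^2|^{-1/4}\xi$ up to normalization, and use \eqref{EIIff} to get the coefficients of $\II_\nu$ in the chart $(u,v)$:
\begin{equation*}
L^\nu=-\langle (N_\nu)_u,\nu_u\rangle,\quad M^\nu=-\langle (N_\nu)_u,\nu_v\rangle,\quad N^\nu=-\langle (N_\nu)_v,\nu_v\rangle.
\end{equation*}
Because $\langle\xi,\nu_u\rangle=\langle\xi,\nu_v\rangle=0$ identically, differentiating these relations (the Leibniz rule) lets one trade the derivatives of $N_\nu$ for derivatives of $\nu$: for instance $\langle\xi_u,\nu_u\rangle=-\langle\xi,\nu_{uu}\rangle$, and after accounting for the scalar factor $|LN-M^2|^{-1/4}$ (which is nonzero off the parabolic set and does not affect the zero set of the BDE) the coefficients $L^\nu,M^\nu,N^\nu$ are seen to be nonzero multiples of $l,m,n$. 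Hence the two BDEs have the same solution curves, giving \itm{i}.

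For part \itm{ii}, I would compare the two Gaussian-type curvatures. By \eqref{eqGKCandH} the affine parabolic condition on $S$ is $ln-m^2=0$, while $\nu(p)$ is a Euclidean parabolic point of $S^\nu$ iff its Gaussian curvature $K_\nu=\det DN_\nu$ vanishes, equivalently $L^\nu N^\nu-(M^\nu)^2=0$. Using the proportionalities $L^\nu\propto l$, $M^\nu\propto m$, $N^\nu\propto n$ established in the computation above (with a common nonvanishing conformal factor), the discriminant $L^\nu N^\nu-(M^\nu)^2$ is a nonzero multiple of $ln-m^2$, so the two vanishing conditions coincide.

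The main obstacle I expect is purely computational bookkeeping in part \itm{i}: one must carefully track the normalization factor $|LN-M^2|^{-1/4}$ relating $N_\nu$ to $\xi$, and verify that all terms involving derivatives of this scalar factor drop out of the relevant inner products (they do, because such terms are multiples of $\langle\xi,\nu_u\rangle$ or $\langle\xi,\nu_v\rangle$, which vanish). Making precise the claim that the two binary differential equations share the same integral curves requires confirming that the proportionality between $(L^\nu,M^\nu,N^\nu)$ and $(l,m,n)$ is by a single common factor, not three independent ones; this follows from the uniform appearance of $\xi$ (equivalently $N_\nu$) in all three coefficients, so the apparent difficulty resolves once the defining orthogonality relations \eqref{eq_rel_noraff} are applied consistently.
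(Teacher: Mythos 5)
Your proposal is correct and follows essentially the same route as the paper: both identify the unit normal $N_\nu$ of $S^\nu$ as a nonzero scalar multiple $\lambda\xi$ of the affine normal via \eqref{eq:na} and \eqref{eq_rel_noraff}, and then show that the coefficients of $\II_\nu$ are a \emph{common} nonzero multiple of $l,m,n$ (the paper computes $\langle N_\nu,\nu_{uu}\rangle=\lambda\langle\xi,\nu_{uu}\rangle$ etc., which is the same Leibniz-rule trade you perform, and part \itm{ii} follows from the proportional discriminants exactly as you argue). The only cosmetic difference is that you phrase $\II_\nu$ as $-\langle DN_\nu,D\nu\rangle$ while the paper uses the equivalent $\langle N_\nu,\nu_{uu}\rangle$ form; this changes nothing of substance.
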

\begin{proof}
\begin{enumerate}
\item For this item we calculate the normal vector field and the second fundamental form of $S^{\nu}$ below. The normal field of $S^{\nu}$, denoted by $N_{\nu}$ is given by
\begin{equation*}
N_{\nu}=\frac{\nu_u\wedge\nu_v}{\left|\nu_u\wedge\nu_v\right|}=\lambda\,\xi,
\end{equation*}
where $\xi$ is the affine normal field of $S$ and $\lambda$ is a smooth function satisfying $\lambda\neq0$. The coefficients of the second fundamental form are given by
\begin{equation*}
e_{\nu}=\left\langle N_{\nu},\nu_{uu} \right\rangle=\lambda\left\langle \xi,\nu_{uu}\right\rangle=\lambda\, l,
\end{equation*}
\begin{equation*}
f_{\nu}=\left\langle N_{\nu},\nu_{uv} \right\rangle=\lambda\left\langle \xi,\nu_{uv}\right\rangle=\lambda\, m,
\end{equation*}
\begin{equation*}
g_{\nu}=\left\langle N_{\nu},\nu_{vv} \right\rangle=\lambda\left\langle \xi,\nu_{vv}\right\rangle=\lambda\, n.
\end{equation*}
It follows that along $\gamma$
\begin{align*}
  \II_{\nu}\left(\eta'\right) &= e_{\nu}\,du^2+2\,f_{\nu}\,du\,dv+f_{\nu}\,dv^2 \\
  &=\lambda\left(l\,du^2+2\,m\,du\,dv+n\,dv^2\right)=\lambda\,\IIIa_\alpha\left(\beta'\right),
\end{align*}
and as $\lambda\neq0$, the result follows.  
\item This item is a direct consequence of~\itm{i}.
\end{enumerate}
\end{proof}

\subsection{Height functions an affine height functions}

A geometrical characterization of an asymptotic line $\gamma$ on a smooth surface $S$, is that their osculating plane is tangent to $S$ along $\gamma$. It is well known that all the points of $\gamma$ have contact of order 2 with the tangent plane of $S$ at $\gamma(t)$. The order of contact of $S$ with its tangent plane at a cusp of Gauss point is equal to 3. The geometric information given via contact of a surface with planes and lines is called \textit{The flat geometry of $S$}. The contact of $S$ with planes is measured by the singularities of the height functions on $S$. The family of height functions $H\colon U\times\mathbb{S}^2\rightarrow\mathbb{R}$ on $S$ is given by
\begin{equation*}
H((u,v),\text{\bf{w}})=\langle \alpha(u,v),\text{\bf{w}}\rangle.
\end{equation*}
For $\bf{w}\in\mathbb{S}^2$ fixed, the height function $h_{\bf{w}}$ along the direction $\bf{w}$ is given by $h_{\bf{w}}(u,v)=H((u,v),\bf{w})$. We say that a map germ $\Gamma\colon \mathbb{R}^2\rightarrow\mathbb{R}$ has an $A^{\pm}_{k}$ singularity at $(0,0)$ if it is possible, via a change of coordinates and multiplying by constant terms to put $\Gamma$ as $x^2\pm y^{k+1}$. Table~\ref{table1} shows the geometrical characterization of the $A^{\pm}_{k}$ type singularities of $h_{\bf{w}}$ at $p=\alpha(0,0)$.
\begin{table}[ht!!!]
\caption{Geometric characterization of the local singularities of $h_{\bf{w}}$ (see~\cite{Izumiya2016}).}
\begin{center}
\begin{tabular}{@{}ccl@{}}
\toprule
Type & Normal form     &   Geometric characterization\\
\midrule
$A^{\pm}_{1}$ & $x^2\pm y^2$ & $\bf{w}$ is the normal direction to $S$ and $p$ is non a parabolic point. \\
$A_{2}$       & $x^2+y^3$    & $p$ is a parabolic point and their asymptotic direction is \\
              &              & transversal to the parabolic set.\\
$A^{\pm}_{3}$ & $x^2\pm y^4$ & $p$ is a parabolic point and their asymptotic direction is \\
              &              & tangent to the parabolic set.\\
\bottomrule
\end{tabular}
\end{center}
\label{table1}
\end{table}

In the affine differential geometry context, there is also the \textsl{affine height functions}, see for example \cite{Blaschke1923,Davis2008,Nomizu1994,Buchin1983}. The family of affine height functions $H^a\colon U\times\mathbb{S}^2\rightarrow\mathbb{R}$ on $S$ is given by
\begin{equation}\label{eq_aff_supp}
H^a((u,v),\text{\bf{w}})=\langle \nu(u,v),\text{\bf{w}}\rangle,\quad\text{where $\nu$ is the co-normal vector of $S$}.
\end{equation}
In \cite{Davis2008} the author studied the singularities of the members $h^a_{\bf{w}}$ of the family of affine height functions, (defined analogously as the euclidean case for a fixed $\bf{w}\in\mathbb{S}^2$) and characterized them geometrically (see Table~\ref{table2}).
\begin{table}[ht!!!]
\caption{Geometric characterization of the local singularities of $h^a_{\bf{w}}$ (see~\cite[Chapter 7]{Davis2008}).}
\begin{center}
\begin{tabular}{@{}ccl@{}}
\toprule
Type & Normal form  & Geometric characterization\\
\midrule
$A^{\pm}_{1}$ & $x^2\pm y^2$ & $\bf{w}$ is in the direction of the affine normal direction to $S$ \\
						&              & and $p$ is non an affine parabolic point. \\
$A_{2}$       & $x^2+y^3$    & $p$ is an affine parabolic point and their affine asymptotic  \\
              &              & direction is transversal to the affine parabolic set.\\
$A^{\pm}_{3}$ & $x^2\pm y^4$ & $p$ is an affine  parabolic point and their affine asymptotic  \\
              &              & direction is tangent to the affine parabolic set.\\
\bottomrule
\end{tabular}
\end{center}
\label{table2}
\end{table}

A special fact here is that the members of the family of affine height functions capture the flat geometry of $S^{\nu}$. Thus using the geometric information in the Tables~\ref{table1} and \ref{table2} we can conclude that:
\begin{theorem}\label{th_rel_geo_cupGauss}
In the conditions of Theorem \ref{th_rel_geo}, a point $p\in S$ is an affine cusp of Gauss point if and only if the point $\nu(p)\in S^{\nu}$ is a cusp of Gauss point.
\end{theorem}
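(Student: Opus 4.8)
The plan is to exploit the observation highlighted just before the statement: the family of \emph{affine} height functions of $S$ coincides with the family of \emph{Euclidean} height functions of the co-normal surface $S^\nu$. Indeed, by \eqref{eq_aff_supp} the affine height function of $S$ in the direction $\text{\bf{w}}$ is $h^a_{\bf{w}}(u,v)=\langle\nu(u,v),\text{\bf{w}}\rangle$, which is literally the Euclidean height function of the parametrized surface $\nu\colon U\to\mathbb{R}^3$ whose image is $S^\nu$. Hence, viewed as a germ on the parameter domain $U$ at the point corresponding to $p$, the function $h^a_{\bf{w}}$ is the \emph{same} scalar function whether we read it as an affine height function of $S$ or as a height function of $S^\nu$; in particular its type in the $A^{\pm}_k$ classification is independent of which reading we adopt.

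The second step is to match the distinguished direction $\text{\bf{w}}$ appearing in the two tables. In Table~\ref{table2} the degenerate ($A_{\geq 2}$) singularities of $h^a_{\bf{w}}$ occur when $\text{\bf{w}}$ points in the affine normal direction $\xi$ of $S$; in Table~\ref{table1}, applied to the surface $S^\nu$, the corresponding direction is the Euclidean normal $N_\nu$ of $S^\nu$. By the computation in the proof of Theorem~\ref{th_rel_geo} we have $N_\nu=\lambda\,\xi$ with $\lambda\neq 0$, so after normalization these two distinguished unit directions agree up to sign; since the $A_3$ condition is insensitive to the sign of $\text{\bf{w}}$, the very same member $h^a_{\bf{w}}$ of the family is the one to be tested on both sides.

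With these identifications the equivalence is immediate. By Table~\ref{table2}, $p$ is an affine cusp of Gauss point of $S$ exactly when $h^a_{\bf{w}}$ has an $A_3$ singularity at the point corresponding to $p$ (equivalently, the affine asymptotic direction is tangent to the affine parabolic set). By Table~\ref{table1} applied to $S^\nu$, the point $\nu(p)$ is a cusp of Gauss point of $S^\nu$ exactly when the height function of $S^\nu$ has an $A_3$ singularity at $\nu(p)$ (equivalently, the asymptotic direction of $S^\nu$ is tangent to its parabolic set). Since the two height functions are the same germ and the distinguished directions coincide, the two $A_3$ conditions are identical, and therefore $p$ is an affine cusp of Gauss point of $S$ if and only if $\nu(p)$ is a cusp of Gauss point of $S^\nu$.

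The step I expect to require the most care is the claim that the $A^{\pm}_k$ type of $h^a_{\bf{w}}$ is genuinely a property of the germ on $U$ and does not depend on the ambient (Euclidean versus affine) interpretation — which is exactly what makes both tables applicable to the one function $\langle\nu(u,v),\text{\bf{w}}\rangle$ — together with the direction matching via $N_\nu=\lambda\xi$. Once these are in place the rest is a direct comparison of the two tables. Alternatively, one may bypass the tables and argue synthetically: by Proposition~\ref{prop_con_sup} the map $\nu$ is an immersion, hence a local diffeomorphism onto $S^\nu$; by Theorem~\ref{th_rel_geo}\,\itm{i} it carries affine asymptotic lines to asymptotic lines, and by Theorem~\ref{th_rel_geo}\,\itm{ii} it carries the affine parabolic set to the parabolic set. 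As a diffeomorphism preserves tangency of a curve to a set, the affine asymptotic direction is tangent to the affine parabolic set at $p$ if and only if the asymptotic direction of $S^\nu$ is tangent to the parabolic set at $\nu(p)$, which is precisely the defining condition for cusp of Gauss points on each side.
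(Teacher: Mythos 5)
Your proposal is correct and matches the paper's approach: the paper's entire proof consists of the observation stated just before the theorem, namely that the affine height functions of $S$ are literally the Euclidean height functions of $S^{\nu}$ (so they ``capture the flat geometry of $S^{\nu}$''), after which the $A_3$ characterizations of cusps of Gauss in Tables~\ref{table1} and~\ref{table2} are compared directly --- exactly your first argument, just spelled out in more detail, including the direction matching $N_{\nu}=\lambda\,\xi$ which the paper leaves implicit. Your alternative synthetic argument at the end (using that $\nu$ identifies the two asymptotic nets and the two parabolic sets over the same parameter domain via Theorem~\ref{th_rel_geo}, so the tangency conditions defining the two kinds of cusp of Gauss coincide) is also valid and is a route the paper does not take.
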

For more details on geometric contact from a singularity theory viewpoint see~\cite{Arnold1985,Izumiya2016}.

%
%
\section{Blaschke's asymptotic lines away of Euclidean parabolic points}
\label{sec:section4}

It is well known that the Euclidean Gaussian curvature, denoted here by $K^{\mathrm{e}}$ for emphasis, induces a natural stratification on a regular surface $S$. Let $p\in S$, then it satisfies one of the conditions listed below:
\begin{itemize}
	\item if $K^{\mathrm{e}}(p)<0$, then $p$ is a \emph{hyperbolic point},
	\item if $K^{\mathrm{e}}(p)=0$, then $p$ is a \emph{parabolic point},
	\item if $K^{\mathrm{e}}(p)>0$, then $p$ is an \emph{elliptic point}.
\end{itemize}
In this paper we concentrate on two cases: the first is when $p$ is not a parabolic point and the other is when $p$ is exactly a parabolic point. In this section we consider the first case, i.e., $p$ is a point on $S$ away of the Euclidean parabolic set. With these conditions we have suitable charts for our study.
\begin{proposition}[Pick normal forms,~\cite{Davis2008,Buchin1983}]\label{prop_pick}
Let $S$ be a smooth surface locally parametrized by $\alpha\left(u,v\right)=\left(u,v,h\left(u,v\right)\right)$ and let $p=X(0,0)$. Assume that $p$ is not a parabolic point, then:
\begin{enumerate}
\item If $p$ is an elliptic point, $h\left(u,v\right)$ can be written as
\begin{multline*}
\quad h(u,v) = \frac{1}{2}(u^2+v^2)+\frac{1}{6}\sigma (u^3-3uv^2) + \frac{1}{24}(q_{40}u^4+4q_{31}u^3v \\
 + 6q_{22}u^2v^2+4q_{13}uv^3+q_{04}v^4)+\frac{1}{120}(q_{50}u^5+5q_{41}u^4v \\
 + 10q_{32}u^3v^2+10q_{23}u^2v^3+5q_{14}uv^4+q_{05}v^5) + O(6).
\end{multline*}
\item If $p$ is a hyperbolic point, $h\left(u,v\right)$ can be written as
\begin{multline*}
\quad h(u,v)  = \frac{1}{2}(u^2-v^2) + \frac{1}{6}\sigma (u^3+3uv^2) + \frac{1}{24}(q_{40}u^4+4q_{31}u^3v \\
+ 6q_{22}u^2v^2 + 4q_{13}uv^3 + q_{04}v^4) + \frac{1}{120}(q_{50}u^5 + 5q_{41}u^4v \\
+ 10q_{32}u^3v^2 + 10q_{23}u^2v^3 + 5q_{14}uv^4+q_{05}v^5)+O(6).
\end{multline*}
\end{enumerate}
\end{proposition}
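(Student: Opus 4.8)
The plan is to obtain both normal forms by letting the unimodular affine group $\mathrm{ASL}(\R^3)$ act on the Taylor jet of $h$ at the origin and normalizing the jet order by order, keeping track at each stage of the subgroup that fixes whatever has already been arranged. First I would use a translation to place $p$ at the origin, and then a unimodular vertical shear $(x,y,z)\mapsto(x,y,z-ax-by)$ with $a=h_u(0,0)$, $b=h_v(0,0)$, to make the tangent plane horizontal; this map has determinant $1$ and replaces $h$ by $h-au-bv$, so afterwards $h(0,0)=0$ and $dh(0,0)=0$ and the expansion of $h$ begins at second order.

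Next I would normalize the quadratic (Hessian) part. Since $p$ is not a Euclidean parabolic point, at the origin the Gaussian curvature $K^{\mathrm e}=\det\mathcal H$ is nonzero, where $\mathcal H$ is the Hessian of $h$; thus $\mathcal H$ is nondegenerate, definite in the elliptic case and indefinite in the hyperbolic case. Applying the unimodular linear map that acts as $P\in GL(2,\R)$ on the $(u,v)$-plane and by $\mu$ on the $z$-axis, subject to $\mu\det P=1$, and re-expressing the image as a graph, one checks that $\mu$ is forced to equal $|\det\mathcal H|^{-1/4}$ (the same factor $|K^{\mathrm e}|^{-1/4}$ that appears in~\eqref{AIff}) and that $P$ can then be chosen to reduce the quadratic part to $\tfrac12(u^2+v^2)$ when $\mathcal H$ is definite and to $\tfrac12(u^2-v^2)$ when it is indefinite. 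This pins down $P$ only up to the isotropy group of the quadratic form, namely $SO(2)$ in the elliptic case and $SO(1,1)$ in the hyperbolic case, and I would preserve this residual freedom for the next step.

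The key step is the cubic term. I would first apply the unimodular vertical shears $(x,y,z)\mapsto(x-\epsilon_1 z,\,y-\epsilon_2 z,\,z)$ and compute their effect on the $3$-jet by inverting to express the image again as a graph. A direct substitution shows that the $2$-jet is unchanged while the cubic part acquires, in the elliptic case, the extra term
\[
\tfrac{\epsilon_1}{2}\,u(u^2+v^2)+\tfrac{\epsilon_2}{2}\,v(u^2+v^2),
\]
and analogously with $u^2-v^2$ in the hyperbolic case. Since the space of binary cubics splits, under the isotropy group, as the harmonic (apolar) part $\oplus$ the ``trace'' part spanned by $u(u^2\pm v^2)$ and $v(u^2\pm v^2)$, and since the two parameters $\epsilon_1,\epsilon_2$ surject onto this trace part, I can remove the non-apolar component of the cubic entirely. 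What remains is a harmonic cubic, a two-dimensional representation of the residual isotropy group: a rotation (acting as rotation by $3\theta$, since harmonic cubics are $\cong$ the real and imaginary parts of $(u+iv)^3$) brings it to $\tfrac16\sigma(u^3-3uv^2)$ in the elliptic case, while a boost, conveniently in null coordinates $\xi=u+v$, $\eta=u-v$ where the apolar cubics are spanned by $\xi^3$ and $\eta^3$, brings it to $\tfrac16\sigma(u^3+3uv^2)$ in the hyperbolic case. The quartic and quintic terms are then simply recorded with arbitrary coefficients $q_{ij}$, as the statement does not normalize them, and the remaining $O(6)$ is the Taylor remainder.

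The hard part will be the bookkeeping of the group action on jets: one must verify carefully that the maps used to normalize the cubic do not disturb the $2$-jet already arranged (they do not, to leading order) and handle the two signatures uniformly. The delicate point is the hyperbolic case, where the residual isotropy is the noncompact $SO(1,1)$: a boost rescales $\xi^3$ and $\eta^3$ oppositely, so one must check that the apolar cubic can indeed be reduced to the stated one-parameter family $\sigma(u^3+3uv^2)$. Note finally that $\sigma$ (the Pick invariant) is a genuine affine invariant and cannot be scaled away, since no nontrivial unimodular symmetry of the already normalized $2$-jet rescales it.
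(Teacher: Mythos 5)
The paper itself offers no proof of this proposition (it is quoted from Davis's thesis and Su's book), so your jet-by-jet normalization under $\mathrm{ASL}(\mathbb{R}^3)$ is the natural way to supply one, and most of it is sound: the vertical shear kills the linear part, the condition $\mu\det P=1$ together with the normalization of the Hessian forces $|\mu|=|\det\mathcal H|^{-1/4}$, the shears $(x,y,z)\mapsto(x-\epsilon_1z,\,y-\epsilon_2z,\,z)$ alter only the cubic and do so exactly by adding the trace part $\tfrac{\epsilon_1}{2}u\,q+\tfrac{\epsilon_2}{2}v\,q$ (with $q$ the normalized quadratic form), so the apolarity of the cubic can always be arranged; and in the elliptic case the residual $SO(2)$, acting on harmonic cubics as rotation by $3\theta$, finishes the reduction to $\tfrac16\sigma(u^3-3uv^2)$. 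The elliptic half of the proposition is therefore proved.

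The genuine gap is precisely at the step you flag as delicate and then leave unchecked, and the check in fact fails. In the hyperbolic case write the apolar cubic in null coordinates $\xi=u+v$, $\eta=u-v$ (so $q=\xi\eta$ and the trace part is spanned by $\xi^2\eta,\xi\eta^2$) as $a\xi^3+b\eta^3$. The full isotropy of the normalized $2$-jet inside $\mathrm{ASL}(\mathbb{R}^3)$ acts on $(a,b)$ only by $(a,b)\mapsto(\pm\lambda^3a,\pm\lambda^{-3}b)$: boosts, $-\mathrm{id}$ on the plane, and the extra element $(u,v,z)\mapsto(v,u,-z)$, which gives $(a,b)\mapsto(-a,b)$. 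The target normal form $\tfrac16\sigma(u^3+3uv^2)$ is proportional to $\xi^3+\eta^3$, i.e.\ lies on the line $a=b$. If $ab\neq0$ a boost plus sign changes reaches that line, as you intend; but if exactly one of $a,b$ vanishes (Pick cubic equal to a nonzero multiple of a single null cube, equivalently vanishing Pick invariant with nonvanishing cubic form), the orbit is $\{(\pm\lambda^3a,0)\}$, which never meets $\{a=b\}$ except at the origin. A concrete witness is $h=\tfrac12(u^2-v^2)+(u+v)^3$: its cubic is already apolar, and no unimodular affine change of coordinates brings it to the stated form, since any equivalence between normalized germs must preserve the tangent plane and the $2$-jet, hence its linear part lies in the isotropy just described. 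So to complete the proof you must either impose the (implicit in the cited sources) nondegeneracy hypothesis $ab\neq0$, or add $(u+v)^3$ as a separate exceptional normal form; as written, the hyperbolic case of your argument, and of the literal statement, does not go through.
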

\begin{remark}\label{rem3.2}
The Pick normal forms are obtained using a change of coordinates in the source and affine transformations in the target. Since an affine transformation is not necessarily an Euclidean isometry, the Pick normal forms are not Euclidean normal forms. More precisely, the expression appearing in Proposition~\ref{prop_pick}\,\itm{i} is not a local parametrization of an Euclidean umbilic point ($p=X(0,0)$).
\end{remark}
We will adopt the normal form in~\eqref{n_f_lip-hyp} below for the analysis of the non-parabolic case, 
\begin{multline}\label{n_f_lip-hyp}
 h(u,v) = \frac{1}{2}(u^2+\varepsilon v^2) + \frac{1}{6}\sigma(u^3-3\varepsilon uv^2 ) +  \frac{1}{24}(q_{40}u^4 + 4q_{31}u^3v \\
 + 6q_{22}u^2v^2 + 4q_{13}uv^3 + q_{04}v^4) + \frac{1}{120}(q_{50}u^5 + 5q_{41}u^4v \\
 + 10q_{32}u^3v^2 + 10q_{23}u^2v^3 + 5q_{14}uv^4 + q_{05}v^5) + \frac{1}{720}(q_{60}u^6 \\
 + 6q_{51}u^5v + 15q_{42}u^4v^2 + 20q_{33}u^3v^3 + 15q_{24}u^2v^4 + 6q_{15}uv^5 \\
 + q_{06}v^6) + \frac{1}{5040}(q_{70}u^7 + 7q_{61}u^6v + 21q_{52}u^5v^2 + 35q_{43}u^4v^3 \\
 + 35q_{34}u^3v^4 + 21q_{25}u^2v^5 + 7q_{16}uv^6 + q_{07}v^7) + O(8), \ 
\end{multline}
where, for the elliptic case $\varepsilon=1$, and for the hyperbolic case $\varepsilon=-1$, as in Proposition~\ref{prop_pick}.

Let $\alpha$ be a local parametrization as in Proposition~\ref{prop_pick}, then the coefficients of Blaschke's metric (affine first fundamental form) are given by

\begin{align*}
L &= 1 + \sigma u + \frac{1}{2}q_{40}u^2 + q_{31}uv + \frac{1}{2}q_{22}v^2 + \frac{1}{6}q_{50}u^3 \\
&\qquad + \frac{1}{2}q_{41}u^2v + \frac{1}{2}q_{32}uv^2 + \frac{1}{6}q_{23}v^3 + O(4), \\
M &= -\sigma\varepsilon v + \frac{1}{2}q_{31}u^2+q_{22}uv + \frac{1}{2}q_{13}v^2 + \frac{1}{6}q_{41}u^3 + \frac{1}{2}q_{32}u^2v \\
&\qquad  + \frac{1}{2}q_{23}uv^2 + \frac{1}{6}q_{14}v^3+O(4), \\
N &= \varepsilon-\sigma\varepsilon u+\frac{1}{2}q_{22}u^2+q_{13}uv+\frac{1}{2}q_{04}v^2+\frac{1}{6}q_{32}u^3+\frac{1}{2}q_{23}u^2v \\
&\qquad + \frac{1}{2}q_{14}uv^2+\frac{1}{6}q_{05}v^3+O(4),
\end{align*}
Now, in this chart the coefficients of the affine third fundamental form are given by
\begin{align}\label{eql1}
 l &= -\frac{1}{2}\sigma^2 + \frac{1}{4}q_{40} + \frac{1}{4}\varepsilon q_{22} + \biggl(\frac{1}{2}\sigma^3  - \sigma q_{40} + \frac{1}{4}q_{50} + \frac{1}{2}\sigma\varepsilon q_{22} \notag \\
 &\qquad + \frac{1}{4}\varepsilon q_{32}\biggr)u + \biggl(-\frac{1}{4}q_{31}\sigma + \frac{1}{4}q_{41} \frac{1}{4}\sigma\varepsilon q_{13} + \frac{1}{4}q_{23}\varepsilon\biggr) v + l_{20}u^2 \qquad \\
 &\qquad + l_{11}uv + l_{02}v^2+O(3), \notag
\end{align}
\begin{align}\label{eqm1}
 m &= \frac{1}{4}q_{31} + \frac{1}{4}\varepsilon q_{13} + \biggl(\frac{1}{4}q_{31}\sigma  + \frac{1}{4}q_{41} + \frac{3}{4}\sigma\varepsilon q_{13} + \frac{1}{4}q_{23}\varepsilon\biggr)u \notag \\
&\qquad + \left(-\frac{1}{2}\varepsilon\sigma^3+\sigma q_{22}+\frac{1}{4}q_{32}+\frac{1}{2}\varepsilon q_{04}\sigma+\frac{1}{4}\varepsilon q_{14}\right)v+m_{20}u^2+ r \\
&\qquad + m_{11}uv+m_{02}v^2+O(3), \notag
\end{align}
\begin{align}\label{eqn1}
 n &= -\frac{1}{2}\varepsilon\sigma^2 + \frac{1}{4}q_{22} + \frac{1}{4}\varepsilon q_{04} + \biggl(-\frac{1}{2}\varepsilon\sigma^3 + \frac{1}{4}\varepsilon q_{40}\sigma + \sigma q_{22} + \frac{1}{4}q_{32} \notag \\
&\qquad + \frac{1}{4}\varepsilon q_{04}\sigma + \frac{1}{4}\varepsilon q_{14}\biggr)u + \biggl(\frac{1}{4}\varepsilon q_{31}\sigma + \frac{7}{4}\sigma q_{13} + \frac{1}{4}q_{23} + \frac{1}{4}\varepsilon q_{05}\biggr)v  \\
&\qquad + n_{20}u^2+n_{11}uv + n_{02}v^2 + O(3), \notag 
\end{align}
where $l_{20},l_{11},l_{02},m_{20},m_{11},m_{02},n_{20},n_{11},n_{02}$ depend only on the coefficients of terms up to order five in $h(u,v)$.
\begin{lemma}\label{lem_discrim_parab}
The critical set of the BDE~\eqref{eqlaa} coincides with the affine parabolic set.
\end{lemma}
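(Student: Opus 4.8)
The plan is to compute the discriminant of the binary differential equation~\eqref{eqlaa} and to identify its projection to the $(u,v)$-domain with the zero locus of the affine Gauss--Kronecker curvature by means of formula~\eqref{eqGKCandH}.

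First I would put~\eqref{eqlaa} in the standard form~\eqref{eqedb}. Setting $p=v'/u'$ and dividing by $(u')^2$ gives $F(u,v,p)=n\,p^2+2m\,p+l$, so the coefficients are $a=l$, $b=m$, $c=n$. Following Definition~\ref{defsingset}, the criminant is cut out on $M=\{F=0\}$ by $\partial F/\partial p=2(n\,p+m)=0$, and a base point lies on the discriminant (the projection of the criminant) exactly when $F=0$ and $\partial F/\partial p=0$ share a root $p$. Eliminating $p$ between these two equations yields the classical discriminant relation $m^2-l\,n=0$. Hence the critical set of~\eqref{eqlaa}, read in the $(u,v)$-plane, is $\{\,l\,n-m^2=0\,\}$.

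It then remains to recognize this locus as the affine parabolic set. By Definition~\ref{def_elip_parab_hyp_aff} the latter is $\{\mathcal{K}_{\alpha}^{\mathrm{aff}}=0\}$, and~\eqref{eqGKCandH} gives $\mathcal{K}_{\alpha}^{\mathrm{aff}}=(l\,n-m^2)/(g_{11}g_{22}-g_{12}^2)$. The decisive point is that the denominator never vanishes on the region under study: from the definitions of $g_{11},g_{12},g_{22}$ one computes $g_{11}g_{22}-g_{12}^2=(LN-M^2)/|LN-M^2|^{1/2}$, which is nonzero precisely because $p$ is taken away from the Euclidean parabolic set, where $LN-M^2\neq 0$. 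Therefore $\mathcal{K}_{\alpha}^{\mathrm{aff}}=0$ if and only if $l\,n-m^2=0$, and the two sets coincide.

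The computations are routine, so the point requiring the most care is a bookkeeping one: at points where the leading coefficient $n$ vanishes the affine chart $p=v'/u'$ degenerates. There one works instead with the reciprocal chart $q=u'/v'$, which produces the same symmetric discriminant $m^2-l\,n$, or equivalently one uses the Lie--Cartan field~\eqref{eqLieCartanField} so that the criminant is described intrinsically on $M$ by $\partial F/\partial p=0$, independently of the affine coordinate chosen on the fibre. With this caveat the identification $\{\,l\,n-m^2=0\,\}=\{\mathcal{K}_{\alpha}^{\mathrm{aff}}=0\}$ holds throughout the chart, which is the assertion of the lemma.
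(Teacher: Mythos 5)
Your proof is correct and takes essentially the same route as the paper: characterize the criminant by $F=F_p=0$, eliminate $p$ to obtain $l\,n-m^2=0$, and identify this locus with the affine parabolic set. The only differences are bookkeeping: you spell out the identification step via $\mathcal{K}_{\alpha}^{\mathrm{aff}}=(l\,n-m^2)/(g_{11}g_{22}-g_{12}^2)$ with nonvanishing denominator away from the Euclidean parabolic set (which the paper leaves implicit), and you treat the degenerate case $n=0$ by passing to the reciprocal chart $q=u'/v'$, whereas the paper argues directly that $F_p=0$ forces $m=0$ and then $F=0$ forces $l=0$.
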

\begin{proof}
In Subsection~\ref{sside} we saw that the singular set of the BDE~\eqref{eqlaa} is characterized by 
\begin{equation*}
F(x,y,p)=F_p(x,y,p)=0,
\end{equation*}
where $F_p=\mfrac{\partial F}{\partial p}$. It follows that
\begin{equation*}
F(x,y,p)=l+2mp+np^2=0\quad\text{and}\quad F_p(x,y,p)=2\left(m+np\right)=0.
\end{equation*}
If $n\neq0$, we take $p=-\mfrac{m}{n}$ and replace into $F$ to obtain the result. When $n=0$, we necessarily have $m=l=0$, and in particular, $l\,n-m^2=0$.
\end{proof}
\begin{remark}\label{rem_regions}
If we consider the BDE~\eqref{eqlaa} as a quadratic equation in $p$ with coefficients $l$, $m$, and $n$ we have
\begin{equation*}
p=\frac{dv}{du}=\frac{-m\pm\sqrt{m^2-ln}}{n}.
\end{equation*}
Thus, for $q\in S$ satisfying $(m^2-ln)(q)>0$ there are two transversal affine asymptotic directions at $q$. If $q$ is a parabolic point there is an unique affine asymptotic direction with multiplicity two. If $(m^2-ln)(q)<0$ do not exist affine asymptotic directions. 
\end{remark}
We denote the affine elliptic and hyperbolic regions respectively by $\mathbb{E}^{a}_{\alpha}$ and $\mathbb{H}^{a}_{\alpha}$. The affine parabolic set will be denoted by $\mathbb{P}^{a}_{\alpha}$.
\begin{proposition}\label{prop_regions}
Let $\gamma\colon I\rightarrow S$ an affine asymptotic line of $S$. Then
\begin{enumerate}
\item If $\gamma$ is totally contained in the elliptic region, then $\gamma$ is totally contained in $\mathbb{H}^{a}_{\alpha}$.
\item If $\gamma$ is totally contained in $\mathbb{H}_{\alpha}$, then $\gamma$ is totally contained in $\mathbb{E}^{a}_{\alpha}$.
\end{enumerate}
\end{proposition}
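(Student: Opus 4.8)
The plan is to relate the sign of the Euclidean Gaussian curvature $K^{\mathrm e}$ to the sign of the discriminant $m^2-ln$ of the BDE~\eqref{eqlaa}, since by Remark~\ref{rem_regions} the existence of two transversal affine asymptotic directions at a point $q$ (equivalently, $q\in\mathbb H^a_\alpha$) is governed by $(m^2-ln)(q)>0$, while $q\in\mathbb E^a_\alpha$ corresponds to $(m^2-ln)(q)<0$. Recall from~\eqref{eqGKCandH} that $\mathcal K^{\mathrm{aff}}_\alpha=(ln-m^2)/(g_{11}g_{22}-g_{12}^2)$, and that the affine elliptic/hyperbolic dichotomy of Definition~\ref{def_elip_parab_hyp_aff} is exactly the sign of $\mathcal K^{\mathrm{aff}}_\alpha$. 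So the statement reduces to comparing the sign of $m^2-ln$ against the sign of $K^{\mathrm e}$.

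First I would fix a point of $S$ and choose the normal form~\eqref{n_f_lip-hyp} with $\varepsilon=+1$ for item~\itm{i} (Euclidean elliptic, where $K^{\mathrm e}>0$) and $\varepsilon=-1$ for item~\itm{ii} (Euclidean hyperbolic, where $K^{\mathrm e}<0$). Since $\gamma$ is assumed \emph{totally contained} in the elliptic (resp.\ hyperbolic) region, it suffices to show that every point of the relevant Euclidean region is affine hyperbolic (resp.\ affine elliptic); the statement about $\gamma$ then follows pointwise along the curve. Thus the core claim is purely local: in the elliptic case $(m^2-ln)>0$ everywhere on $\mathbb E_\alpha$, and in the hyperbolic case $(m^2-ln)<0$ everywhere on $\mathbb H_\alpha$.

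The key computational step is to evaluate $ln-m^2$ in terms of the Euclidean data. Using the expansions of $l,m,n$ given in~\eqref{eql1}--\eqref{eqn1}, at the base point one computes the leading value of $ln-m^2$; the denominators $g_{11}g_{22}-g_{12}^2$ in~\eqref{eqGKCandH} are controlled by $K^{\mathrm e}=LN-M^2$ through the relation $g_{11}g_{22}-g_{12}^2=\operatorname{sgn}(LN-M^2)\,|LN-M^2|^{1/2}$, so the sign of $g_{11}g_{22}-g_{12}^2$ matches the sign of $K^{\mathrm e}$. I would then argue that on the Euclidean elliptic region the quadratic form $\IIIa_\alpha$ is indefinite (so $m^2-ln>0$), whereas on the Euclidean hyperbolic region it is definite (so $m^2-ln<0$); equivalently, that the sign of $\mathcal K^{\mathrm{aff}}_\alpha$ is the opposite of what one might naively expect, because passing from the second to the third affine fundamental form inverts the definiteness relative to $K^{\mathrm e}$. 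This inversion is precisely the content of the two items: elliptic Euclidean points are affine hyperbolic, and hyperbolic Euclidean points are affine elliptic.

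The main obstacle will be confirming the \emph{sign} of $ln-m^2$ cleanly rather than just its value at the origin, since a priori the claim is required on an entire open region, not merely at the base point. The cleanest route is to avoid the brute-force fifth-order expansion and instead use Theorem~\ref{th_rel_geo}\,\itm{ii} together with the conormal surface: the affine parabolic set of $S$ corresponds to the Euclidean parabolic set of $S^\nu$, and one checks that $S^\nu$ has no parabolic points over the Euclidean elliptic (resp.\ hyperbolic) region. Then connectedness of each region, combined with continuity of $\mathcal K^{\mathrm{aff}}_\alpha$ and the nonvanishing of $ln-m^2$ there, forces a constant sign, which I determine by evaluating at the single model point using~\eqref{eql1}--\eqref{eqn1}. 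This reduces the problem to one sign computation at $(u,v)=(0,0)$ for each value of $\varepsilon$, which is the efficient way to close the argument.
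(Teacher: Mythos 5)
There is a genuine gap, and it is fatal to the argument as planned. Your central reduction --- ``it suffices to show that every point of $\mathbb{E}_{\alpha}$ is affine hyperbolic and every point of $\mathbb{H}_{\alpha}$ is affine elliptic'' --- is a false statement, not a stronger version of the proposition. The affine parabolic set $\mathbb{P}^{a}_{\alpha}$ is generically a regular curve lying \emph{inside} the Euclidean elliptic and hyperbolic regions: this is exactly the setting of Section~\ref{sec:section4}, where affine cusps of Gauss and flat affine umbilic points are studied at Euclidean \emph{non}-parabolic points, and Remark~\ref{rem_symp_elliptic} (Figure~\ref{fig4}) describes the stratification of $\mathbb{E}_{\alpha}$ into the three strata $\mathbb{E}^{a}_{\alpha}$, $\mathbb{H}^{a}_{\alpha}$, $\mathbb{P}^{a}_{\alpha}$. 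For the same reason your fallback route fails: the co-normal surface $S^{\nu}$ \emph{does} have parabolic points over $\mathbb{E}_{\alpha}$ (by Theorem~\ref{th_rel_geo}~\itm{ii} they are the images of the affine parabolic points of $S$), so the nonvanishing-plus-connectedness argument collapses (and the Euclidean regions need not be connected anyway). The hypothesis you discard --- that the point lies on an affine asymptotic line $\gamma$ --- is precisely what must be used, and the proposition is then purely pointwise, which is how the paper proves it: at $p=\gamma(t)$ the tangent $\gamma'(t)\neq 0$ is a real zero of $\IIIa_{\alpha}$, so $(m^{2}-ln)(p)\geq 0$, i.e.\ the numerator $ln-m^{2}$ of $\mathcal{K}_{\alpha}^{\mathrm{aff}}$ in \eqref{eqGKCandH} is nonpositive along $\gamma$; the Euclidean hypothesis controls only the \emph{denominator}: at a Euclidean elliptic point $\Ia_{\alpha}$ is definite, so $g_{11}g_{22}-g_{12}^{2}>0$ and $\mathcal{K}_{\alpha}^{\mathrm{aff}}(p)<0$, while at a Euclidean hyperbolic point $\Ia_{\alpha}$ is indefinite, the denominator is negative, and $\mathcal{K}_{\alpha}^{\mathrm{aff}}(p)>0$.

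A second, related error is your opening identification ``$q\in\mathbb{H}^{a}_{\alpha}\Leftrightarrow(m^{2}-ln)(q)>0$ and $q\in\mathbb{E}^{a}_{\alpha}\Leftrightarrow(m^{2}-ln)(q)<0$.'' Remark~\ref{rem_regions} relates the sign of $m^{2}-ln$ to the \emph{number of real asymptotic directions}, not to the affine elliptic/hyperbolic dichotomy; since $\mathcal{K}_{\alpha}^{\mathrm{aff}}=(ln-m^{2})/(g_{11}g_{22}-g_{12}^{2})$, the discriminant sign determines the affine type only after fixing the sign of the denominator, which flips between $\mathbb{E}_{\alpha}$ and $\mathbb{H}_{\alpha}$. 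Inside $\mathbb{H}_{\alpha}$ the affine \emph{elliptic} points are exactly those admitting two real affine asymptotic directions. Indeed, with your identification item~\itm{ii} becomes self-contradictory: an affine asymptotic line forces $m^{2}-ln\geq 0$ along it, while your version of $\mathbb{E}^{a}_{\alpha}$ would require $m^{2}-ln<0$, so no asymptotic line could ever lie in $\mathbb{E}^{a}_{\alpha}$. That internal contradiction with the statement you are trying to prove is the clearest signal that the identification, and hence the whole strategy built on it, is wrong.
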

\begin{proof}
It is sufficient to see that if $0\in I$ and $p=\gamma(0)$ is an elliptic point, $\Ia_{\alpha}(p)$ is positive definite, thus $\left(g_{11}g_{22}-g_{12}^2\right)(p)>0$. With these conditions $\mathcal{K}_{\alpha}^{\mathrm{aff}}(p)<0$ and it follows that $p\in\mathbb{H}^{a}_{\alpha}$. The second case is analogous.
\end{proof}
\begin{remark}\label{rem_symp_elliptic}
The Proposition~\ref{prop_regions} establish that the asymptotic net lives in the intersection of 
$$\mathbb{E}_{\alpha}\cap\left(\mathbb{H}^{a}_{\alpha}\cup\mathbb{P}^{a}_{\alpha}\right), \quad\text{or}\quad\mathbb{H}_{\alpha}\cap\left(\mathbb{E}^{a}_{\alpha}\cup\mathbb{P}^{a}_{\alpha}\right)$$
for the elliptic or hyperbolic cases, respectively, see Figure~\ref{fig4}. This behavior is very different to the Euclidean case for which the asymptotic net is defined in the topological closure of the hyperbolic region.
\begin{figure}[htb!]
	\centering
	\includegraphics[width=.6\textwidth,clip]{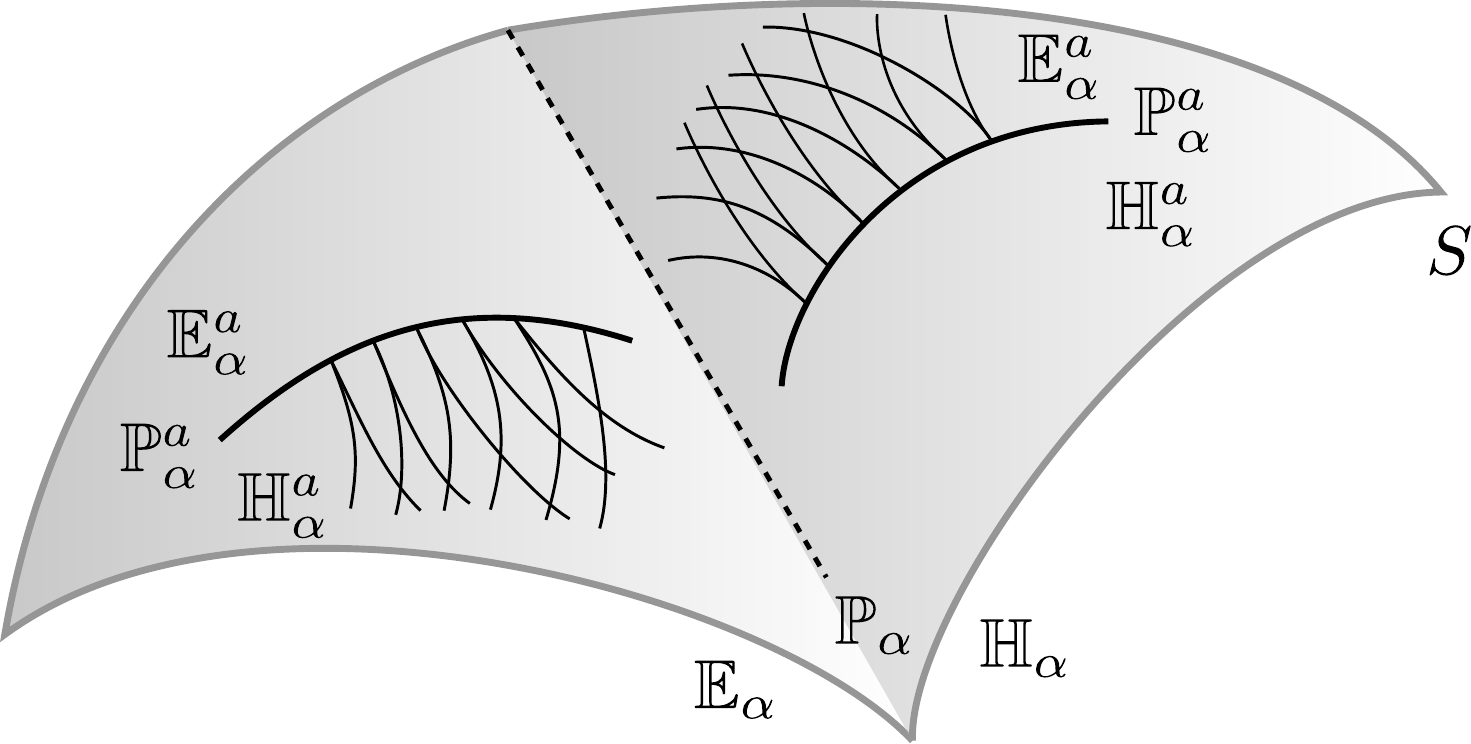}
	\caption{\small Natural stratification of $S$ in terms of the Gaussian curvature. In each one of the regions ($\mathbb{E}_{\alpha}$ and $\mathbb{H}_{\alpha}$) there exist other stratification induced by the Gauss-Kronecker curvature, there are $\mathbb{E}^{a}_{\alpha}$ and $\mathbb{H}^{a}_{\alpha}$ separated by the affine parabolic set $\mathbb{P}^{a}_{\alpha}$.
	}
	\label{fig4}
\end{figure}
\end{remark}
Analogous to Euclidean case, the affine parabolic set is generically a smooth curve. On the parabolic set there exist isolated special points which we will characterize below.
\begin{lemma}\label{lem_transv_aad}
In an ordinary affine parabolic point, the unique affine asymptotic direction is transversal to the affine parabolic set.
\end{lemma}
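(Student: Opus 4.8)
The plan is to extract the unique affine asymptotic direction at the parabolic point directly from the BDE~\eqref{eqlaa} and to test its transversality to $\mathbb{P}^{a}_{\alpha}$ by a single directional-derivative computation. Write $\delta\coloneqq m^2-ln$ for the discriminant of~\eqref{eqlaa}; by Lemma~\ref{lem_discrim_parab} the affine parabolic set is exactly $\{\delta=0\}$, and at an ordinary parabolic point $p$ this is a regular curve, so $\nabla\delta(p)\neq0$. After a linear change of coordinates in the domain---which changes neither the curve $\mathbb{P}^{a}_{\alpha}$ nor the transversality we are after, both being coordinate-free---I may assume $n(p)\neq0$. At $p$ the quadratic $l+2mp+np^2$ has, by Remark~\ref{rem_regions}, the double root $p_0=-m/n$, so the unique affine asymptotic direction is spanned by the vector $(n,-m)$ in the $(u,v)$-plane.

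Next I would record the transversality criterion: the direction $(n,-m)$ fails to be tangent to the level curve $\{\delta=0\}$ at $p$ precisely when
\[
 n\,\delta_u-m\,\delta_v\neq0\quad\text{at }p,
\]
so the whole lemma reduces to showing that this directional derivative does not vanish at an ordinary point.

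The heart of the argument is to evaluate $n\delta_u-m\delta_v$ on the parabolic set. Expanding $\delta_u=2mm_u-l_un-ln_u$ and $\delta_v=2mm_v-l_vn-ln_v$ and substituting the parabolic relation $m^2=ln$ throughout, the many terms collapse to the single identity
\[
 n\,\delta_u-m\,\delta_v=-\,n^2\,(F_u+p\,F_v)\big|_{p=p_0},
\]
with $F=l+2mp+np^2$. The right-hand factor is exactly (up to the nonzero $n^2$) the vertical component $\dot p$ of the Lie--Cartan field~\eqref{eqLieCartanField} on the criminant. Thus transversality of the asymptotic direction is equivalent to $\dot p\neq0$ there, i.e.\ to the projection $\pi$ having a fold (an $A_2$ contact) rather than a cusp at the corresponding criminant point.

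It then remains to match this nondegeneracy with the meaning of \emph{ordinary}. The locus $\{\delta=0,\ n\delta_u-m\delta_v=0\}$ is precisely where the asymptotic direction becomes tangent to $\mathbb{P}^{a}_{\alpha}$; by Table~\ref{table2} together with Theorem~\ref{th_rel_geo_cupGauss} these are the $A_3$ points of the affine height function, that is, the affine cusp of Gauss points, which are exactly the isolated special points excluded from the ordinary ones. Hence at an ordinary parabolic point $n\delta_u-m\delta_v\neq0$ and transversality follows. I expect the main obstacle to be twofold: carrying out the substitution $m^2=ln$ cleanly so that the cancellations really do collapse to the factor $F_u+pF_v$, and making precise---either through the explicit expansions~\eqref{eql1}--\eqref{eqn1}, or through the co-normal correspondence of Theorem~\ref{th_rel_geo}, which turns this into the classical fold/cusp-of-Gauss dichotomy on $S^{\nu}$---that the degenerate locus is genuinely the isolated set of affine cusp of Gauss points, so that ``ordinary'' truly guarantees the fold condition.
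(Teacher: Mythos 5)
Your proposal is correct, and it takes a genuinely different route from the paper's. The paper works entirely in the Pick normal form of Proposition~\ref{prop_pick}: it evaluates $l$, $m$, $n$ at the origin, imposes the parabolic condition by solving for jet coefficients ($q_{22}=-\varepsilon(-2\sigma^2+q_{40})$, $q_{31}=-\varepsilon q_{13}$, which force $l(0,0)=m(0,0)=0$), reads off the unique asymptotic direction $v'(0)=0$ when $n(0,0)\neq0$, and then differentiates the discriminant to obtain the tangent line of the affine parabolic set, concluding transversality under explicit genericity conditions on the coefficients (the degenerate cases $n(0,0)=0$ and $q_{40}=q_{04}$ being deferred to Subsection~\ref{subaacfumb}). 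You instead argue invariantly: the double root gives the direction $(n,-m)$, transversality becomes $n\delta_u-m\delta_v\neq0$, and your key identity
\[
n\,\delta_u-m\,\delta_v=-\,n^2\,(F_u+p\,F_v)\big|_{p=-m/n}
\]
is correct --- the two sides differ by an explicit multiple of $ln-m^2$, hence agree on the parabolic set --- so the lemma reduces to non-vanishing of the $\dot p$-component of the Lie--Cartan field on the criminant, i.e.\ the fold/cusp dichotomy for the projection $\pi$ (your $-(F_u+pF_v)$ is the correct vertical component; note that \eqref{eqLieCartanField} as printed contains a typo, $F_x+pF_p$ where $F_x+pF_y$ is meant). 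As for what each approach buys: the paper's computation produces the explicit coefficient conditions that later arguments consume --- in particular, vanishing of the $u'$-coefficient $6\sigma^3+\varepsilon q_{32}-6q_{40}\sigma+q_{50}$ in its tangency expression is precisely the affine cusp of Gauss condition of Proposition~\ref{eqaaleh} --- and your invariant criterion singles out exactly this condition; indeed the paper's final line cites instead the $v'$-coefficient $\varepsilon q_{41}+2q_{13}\sigma+q_{23}\neq0$, which guarantees regularity of the parabolic curve rather than transversality, so your formulation is arguably the cleaner one on this point. Two caveats on your closing step: once Definition~\ref{def_acg} is in place, ``tangency point $=$ affine cusp of Gauss'' holds by definition, so the genuine content you must supply is the isolatedness/genericity you extract from Table~\ref{table2} and Theorem~\ref{th_rel_geo_cupGauss} (that the degenerate locus is the $A_3$ stratum, generically a set of isolated points); and your reduction presupposes $(l,m,n)(p)\neq(0,0,0)$, i.e.\ that $p$ is not a flat affine umbilic --- harmless here, since the lemma's hypothesis of a unique asymptotic direction already excludes that case, and the paper defers it in exactly the same way.
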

\begin{proof}
We consider the case when the origin is a regular affine parabolic point. Note that
\begin{align*}
 l(0,0) &= \frac{1}{4}\varepsilon(-2\varepsilon\sigma^2+\varepsilon q_{40}+q_{22}), \\
 m(0,0) &= \frac{1}{4}\varepsilon(\varepsilon q_{31}+q_{13}), \\
 n(0,0) &= -\frac{1}{4}\varepsilon(2\varepsilon^2\sigma^2-\varepsilon q_{22}-q_{04}).
\end{align*}
The point $p=\alpha(0,0)$ is an affine parabolic point if $(ln-m^2)(0,0)=0$. We can to choose
\begin{equation*}
q_{22}=-\varepsilon(-2\sigma^2+q_{40})\quad\text{and}\quad q_{31}=-\varepsilon q_{13},
\end{equation*}
thus $l(0,0)=m(0,0)=0$. In consequence,  
\begin{equation*}
l(0,0)(u'(0))^2+2m(0,0)u'(0)v'(0)+n(0,0)(v'(0))^2=n(0,0)(v'(0))^2=0,
\end{equation*}
then if $n(0,0)\neq0$ the affine asymptotic direction is determined by $v'(0)=0$. The case $n(0,0)=0$ will be considered in the Subsection~\ref{subaacfumb}. The tangent of affine parabolic set at $p$ is given by 
\begin{equation*}
\frac{1}{16}(q_{40}-q_{04})\bigl(\varepsilon(6\sigma^3+\varepsilon q_{32}-6q_{40}\sigma+q_{50})u'(0)+(\varepsilon q_{41}+2q_{13}\sigma+q_{23})v'(0)\bigr).
\end{equation*}
The case $q_{40}-q_{04}$ will be considered in Subsection~\ref{subaacfumb}. Generically, it holds that  $\varepsilon q_{41}+2q_{13}\sigma+q_{23}\neq0$ and the result follows.
\end{proof}
\begin{definition}\label{def_acg}
A point $p$ in $\mathbb{P}^{a}_{\alpha}$ is called an \textit{affine cusp of Gauss} if the affine asymptotic direction at this point is tangent to the affine parabolic set at $p$.
\end{definition}
\begin{remark}\label{rem_cusp}
It is proved in~\cite{Bruce1984} that the integral curves of BDEs along the discriminant set are diffeomorphic to ordinary cusps except at special points where the integral curve is tangent to the singular set. In our context, at ordinary affine parabolic points the affine asymptotic lines are diffeomorphic to ordinary cusps except at an affine cusp of Gauss, see Figure~\ref{fig5}.
\begin{figure}[htb!]
	\centering
	\includegraphics[width=.3\textwidth,clip]{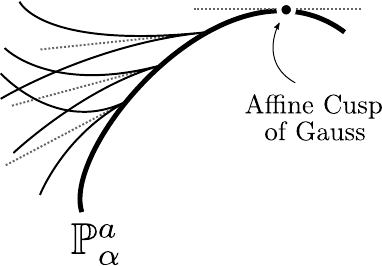}
	\caption{\small Affine asymptotic lines along the affine parabolic set are diffeomorphic to ordinary cusps except at affine cusp of Gauss points.}
	\label{fig5}
\end{figure}
\end{remark}
By Remark~\ref{rem_cusp} we will focus our attention on the dynamical behavior of affine asymptotic lines near to affine cusp of Gauss points.
\begin{proposition}\label{eqaaleh}
Consider the binary differential equation given by
\begin{equation}\label{eqedb1}
l\,du^2+2m\,dudv+n\,dv^2=0,
\end{equation}
where $l$, $m$, $n$ are as in~\eqref{eql1},~\eqref{eqm1},~\eqref{eqn1}. Suppose that $p=\alpha(0,0)$ is an affine cusp of Gauss. Then, by changes of coordinates~\eqref{eqedb1} can be written as
\begin{equation}\label{eq_edb_dav}
\left(-v+\lambda u^2+O(3)\right)du^2+\left(1+\Lambda u^2+O(3)\right)dv^2=0,
\end{equation}
where
\begin{multline*}
\lambda = -\frac{1}{\sigma(\varepsilon q_{41}+2q_{13}\sigma+q_{23})^2}\bigl((6\sigma^3+20q_{13}q_{41}\sigma^2 + (10q_{23}q_{41}-6q_{04})\sigma+q_{50})\varepsilon  \\
 + 20q_{13}^2\sigma^3+20q_{13}q_{23}\sigma^2+5(q_{23}^2+q_{41}^2)\sigma+q_{32}\bigr),
\end{multline*}
and $\Lambda$ has a big expression in terms of the coefficients of the parametrization $\alpha$.
\end{proposition}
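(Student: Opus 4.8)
The plan is to realize the normal form \eqref{eq_edb_dav} by a sequence of source diffeomorphisms and multiplications by nonvanishing factors, starting from the explicit expansions \eqref{eql1}, \eqref{eqm1}, \eqref{eqn1} and exploiting the algebraic constraints that define an affine cusp of Gauss. First I would record the defining relations. By the proof of Lemma~\ref{lem_transv_aad}, at a regular affine parabolic point we may take $q_{22}=-\varepsilon(-2\sigma^2+q_{40})$ and $q_{31}=-\varepsilon q_{13}$, so that $l(0,0)=m(0,0)=0$ and the affine asymptotic direction is the $u$-direction $v'(0)=0$, provided $n(0,0)\neq0$. A short substitution into \eqref{eqn1} gives $n(0,0)=\tfrac14\varepsilon(q_{04}-q_{40})$, so this last condition is exactly $q_{40}\neq q_{04}$, a genericity hypothesis already isolated in Lemma~\ref{lem_transv_aad}. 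The affine cusp of Gauss condition (Definition~\ref{def_acg}) is that the asymptotic direction $(1,0)$ be tangent to the affine parabolic set; reading this off from the tangent expression in Lemma~\ref{lem_transv_aad} amounts to the vanishing of its $u'(0)$-coefficient, i.e. $6\sigma^3-6\sigma q_{40}+q_{50}+\varepsilon q_{32}=0$. Comparing with \eqref{eql1} one sees that this is precisely the statement $\partial_u l(0,0)=0$; hence at a cusp of Gauss the linear part of $l$ reduces to $\partial_v l(0,0)\,v$, with $\partial_v l(0,0)=\tfrac{\varepsilon}{4}(\varepsilon q_{41}+2q_{13}\sigma+q_{23})$.

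Next I would normalize \eqref{eqedb1}. Since multiplying a binary differential equation by a nonvanishing function does not alter its solution curves, and $n(0,0)\neq0$, I divide by $n$ and complete the square, writing $\tfrac{l}{n}du^2+\tfrac{2m}{n}du\,dv+dv^2=\bigl(dv+\tfrac{m}{n}du\bigr)^2+\tfrac{ln-m^2}{n^2}du^2$. Because $m(0,0)=0$, the $1$-form $dv+\tfrac{m}{n}du$ is a near-identity perturbation of $dv$, so there is a source diffeomorphism $\bar v=\phi(u,v)$ whose differential is proportional to it up to the required jet order; carrying this change out removes the cross term and turns the coefficient of $du^2$ into a nonzero multiple of the discriminant $ln-m^2$ of \eqref{eqedb1} (the affine parabolic set, by Lemma~\ref{lem_discrim_parab}). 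Using $l(0,0)=m(0,0)=0$ together with $\partial_u l(0,0)=0$, the linear part of $ln-m^2$ is $\partial_v l(0,0)\,n(0,0)\,v$, with no $u$-term; this is the geometric content of the cusp-of-Gauss hypothesis and is what forces the leading $-v$ in \eqref{eq_edb_dav}.

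Finally I would fix the scalings. Rescaling $v$ and multiplying the equation by suitable nonzero constants turns the coefficient of $du^2$ into $-v+(\text{quadratic})+O(3)$ and the coefficient of $dv^2$ into $1+(\text{quadratic})+O(3)$; this step requires $\partial_v l(0,0)\neq0$, i.e. the remaining genericity condition $\varepsilon q_{41}+2q_{13}\sigma+q_{23}\neq0$ of Lemma~\ref{lem_transv_aad}, which is exactly why this factor appears in the denominator of $\lambda$. A further rescaling of $u$ together with near-identity source changes then clears the surviving low-order monomials (the constant and $u$-linear terms of the first coefficient, its $uv$ and $v^2$ terms, and all linear terms of the second), reducing the equation to the stated form. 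The coefficient $\lambda$ is the resulting $u^2$-coefficient of the first slot: substituting the parabolic and cusp-of-Gauss relations into the quadratic coefficients of \eqref{eql1}, \eqref{eqm1}, \eqref{eqn1} and applying these scalings yields the displayed closed formula, the square on $\varepsilon q_{41}+2q_{13}\sigma+q_{23}$ arising from the two rescalings and the factor $\sigma$ from the normalization of $u$.

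The hard part will be the bookkeeping in this last step: tracking how each coordinate change and each nonvanishing multiplier propagates into the $u^2$-coefficient, and verifying that the coefficient of $dv^2$ cannot be simultaneously reduced to a constant, the surviving obstruction being precisely the term $\Lambda u^2$. This incomplete diagonalization is the expected phenomenon at a well-folded singularity of a binary differential equation, so I would not attempt to remove $\Lambda$; instead I would only certify that the preliminary normal form \eqref{eq_edb_dav} is attainable and extract $\lambda$ explicitly, leaving $\Lambda$ as the (lengthy) residual quadratic coefficient whose exact value is immaterial for the subsequent analysis.
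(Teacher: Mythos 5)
Your proposal is correct and reaches the paper's normal form, but by a genuinely different reduction. The preliminaries coincide: you impose the same affine-parabolic relations $q_{22}=-\varepsilon(-2\sigma^2+q_{40})$, $q_{31}=-\varepsilon q_{13}$, and your cusp-of-Gauss condition $6\sigma^3-6\sigma q_{40}+q_{50}+\varepsilon q_{32}=0$ is exactly the paper's $q_{40}=(6\sigma^3+q_{50}+\varepsilon q_{32})/(6\sigma)$; your reformulation of it as $\partial_u l(0,0)=0$ is a conceptual sharpening the paper does not make explicit. The proofs then diverge. The paper normalizes only the constant term $c_{00}$ of $n$ (see \eqref{eqa00}) and applies one near-identity diffeomorphism with undetermined Taylor coefficients, chosen so that the BDE becomes \eqref{eq_edb_dav}, reading off $\lambda=-\tfrac{1}{2}(a_{01}b_{10}+2b_{10}^2-2a_{20})/a_{01}^2$ in the notation of \eqref{eqlmnfndav}. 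You instead divide by the full function $n$, complete the square so the cross term vanishes identically, straighten the $1$-form $dv+\tfrac{m}{n}\,du$ by an integrating factor, and identify the $du^2$-coefficient as a nonvanishing multiple of the discriminant $ln-m^2$, i.e.\ of the defining function of the affine parabolic set (Lemma~\ref{lem_discrim_parab}), whose linear part reduces to $\partial_v l(0,0)\,n(0,0)\,v$ precisely because of the cusp condition. Carried out, your route yields the same constant: the $u^2$-coefficient of $ln-m^2$ is $a_{20}-b_{10}^2$, the straightening $\bar v=v+\tfrac{1}{2}b_{10}u^2+\cdots$ shifts it by $-\tfrac{1}{2}a_{01}b_{10}$, and the final rescaling divides by $a_{01}^2$, giving $(a_{20}-b_{10}^2-\tfrac{1}{2}a_{01}b_{10})/a_{01}^2$, which equals the paper's $\lambda$. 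What you gain is transparency: the leading $-v$ is literally the affine parabolic set, and the cross term never has to be re-removed jet by jet; what the paper's single undetermined-coefficients substitution gains is that it produces $\lambda$ and $\Lambda$ simultaneously and explicitly in terms of the jet of the parametrization. Two harmless slips in your narrative: the factor $\sigma$ in the denominator of $\lambda$ enters through substituting the cusp-of-Gauss value of $q_{40}$ (and through $c_{00}$ in \eqref{eqa00}, which is proportional to $1/\sigma$), not through the normalization of $u$; and the constant and $u$-linear terms of the first coefficient that you propose to clear are already zero by the parabolic and cusp conditions, so no coordinate change is needed for them.
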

\begin{proof} 
In local coordinates and using Lemma~\ref{lem_transv_aad} we have that if $q_{22}=-\varepsilon(-2\sigma^2+q_{40})$ and $q_{31}=-\varepsilon q_{13}$, then $p=\alpha(0,0)$ is an affine parabolic point. The condition for $p$ to be an affine cusp of Gauss is 
\begin{equation*}
q_{40}=\frac{1}{6}\frac{6\sigma^3+q_{50}+\varepsilon q_{32}}{\sigma}.
\end{equation*}
Under the stated hypotheses, the coefficients of~\eqref{eqedb1} are given by
\begin{align}\label{eqlmnfndav}
l(u,v) &= a_{01}v+a_{20}u^2+a_{11}uv+a_{02}v^2+O(3), \\
m(u,v) &= b_{10}u+b_{01}v+b_{20}u^2+b_{11}uv+b_{02}v^2+O(3), \\
n(u,v) &= c_{00}+c_{10}u+c_{01}v+c_{20}u^2+c_{11}uv+c_{02}v^2+O(3),
\end{align}
where $a_{ij},b_{kl},c_{st}$ depend on the coefficients of the parametrization. In particular, 
\begin{equation}\label{eqa00}
c_{00}=-\frac{1}{24\sigma}\varepsilon(6\sigma^3-6q_{04}\sigma+q_{50}+\varepsilon q_{32}).
\end{equation}
Generically $c_{00}\neq0$, then without loss of generality we can divide~\eqref{eqedb1} by the expression in~\eqref{eqa00} to obtain a new equation where the constant term is equal to one. We will continue with the same notation for the coefficients given in~\eqref{eqlmnfndav} but now $c_{00}=1$. The 2-jet of the discriminant curve is given by $v=\mfrac{b_{10}^2a_{20}}{a_{01}}u^2$, thus at $(0,0)$ the vector $(1,0)$ is tangent to the affine parabolic set. Now consider the application $\psi\colon (x,y)\mapsto(u(x,y),v(x,y))$ where
\begin{align*}
u(x,y) &=s_{10}x+s_{01}y+s_{20}x^2+s_{11}xy+s_{02}y^2+s_{30}x^3+s_{21}x^2y+s_{12}xy^2+s_{03}y^3+\ldots , \\
v(x,y) &=t_{10}x+t_{01}y+t_{20}x^2+t_{11}xy+t_{02}y^2+t_{30}x^3+t_{21}x^2y+t_{12}xy^2+t_{03}y^3+\ldots,
\end{align*}
with $s_{10}t_{01}-s_{01}t_{10}\neq 0$. $\psi$ is a germ of a diffeomorphism at $(0,0)$ and we used it for a change of coordinates in~\eqref{eqedb1}. It follows that for a suitable choice of the coefficients of $\psi$ we obtain that the BDE takes the desired form as in~\eqref{eq_edb_dav}, where
\begin{equation*}
\lambda=-\frac{1}{2}\frac{a_{01}b_{10}+2b_{10}^2-2a_{20}}{a_{01}^2}
\end{equation*}
and
\begin{multline*}
\Lambda = \frac{1}{32a_{01}^3}\bigl((-4b_{01}^2-4b_{01}c_{10}-16b_{10}c_{01}+7c_{10}^2-16a_{02}+32b_{11}-32c_{20})a_{01}^2 \\
 - 2(2b_{01}-c_{10})(-6b_{01}b_{10}-b_{10}c_{10}+4a_{11})a_{01} \\
 + 4(2b_{01}-c_{10})^2(-b_{10}^2+a_{20}) + 4a_{01}^3c_{01}\bigr). \qedhere
\end{multline*}
\end{proof}
\begin{theorem}\label{theo_laa_nop}
With the conditions of Proposition~\ref{eqaaleh}, the configuration of affine asymptotic lines near to an affine cusp of Gauss point is topologically equivalent to one of the three models shown in Figures~\ref{fig1} and~\ref{fig3}.
\end{theorem}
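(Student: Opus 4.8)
The plan is to recognize that the normal form \eqref{eq_edb_dav} produced in Proposition~\ref{eqaaleh} is precisely Davydov's normal form for a binary differential equation at a \emph{folded} singularity of its discriminant, and then to read off the topological type from the linearization of the associated Lie--Cartan field. Writing $p=dv/du$ and using that the cross term is absent in \eqref{eq_edb_dav}, the BDE is the zero set of
\[
F(u,v,p)=\bigl(-v+\lambda u^2\bigr)+\bigl(1+\Lambda u^2\bigr)p^2+O(3).
\]
First I would check that the surface $M=\{F=0\}\subset\R^3_{u,v,p}$ is smooth at the origin, which holds because $F_v(0)=-1\neq0$; hence $v$ is a smooth function of $(u,p)$ and the pair $(u,p)$ serves as a coordinate chart on $M$ near the singular point.

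The next step is to lift the equation to $M$ via the Lie--Cartan field \eqref{eqLieCartanField}, which in these coordinates reads $X=\bigl(F_p,\,pF_p,\,-(F_u+pF_v)\bigr)$; its zeros on $M$ are the folded singularities. To leading order $\dot u=F_p=2p$ and $\dot p=-(F_u+pF_v)=-2\lambda u+p$, so $X$ has a unique zero over the affine cusp of Gauss, at the origin of $M$. The reduced linear part on $M$ in the coordinates $(u,p)$ is
\[
A=\begin{pmatrix}0 & 2\\ -2\lambda & 1\end{pmatrix},
\]
with characteristic polynomial $\mu^2-\mu+4\lambda$. Reading the determinant $4\lambda$ and the discriminant $1-16\lambda$ separates the three generic cases: $\lambda<0$ gives real eigenvalues of opposite sign (folded saddle), $0<\lambda<\tfrac1{16}$ gives real eigenvalues of the same sign (folded node), and $\lambda>\tfrac1{16}$ gives complex eigenvalues (folded focus), matching the models of Figures~\ref{fig1} and~\ref{fig3}.

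To conclude I would invoke two standard facts. In each of the three open cases the singular point of $X$ is hyperbolic (the eigenvalues are nonzero and have nonzero real part), so by the Hartman--Grobman theorem the phase portrait of $X$ on $M$ near the origin is topologically determined by $A$; in particular both the $O(3)$ tail of $F$ and the coefficient $\Lambda$ are irrelevant to the topological type. Moreover, the criminant $\{F_p=0\}$, which to leading order is $\{p=0\}$, is a smooth curve through the singular point along which $\pi\colon M\to\R^2_{u,v}$ is a generic fold, so the configuration in the $(u,v)$-plane is the fold-projection of the upstairs phase portrait. This is exactly Davydov's classification~\cite{Dav1985,Davydov}, giving topological equivalence to one of the three folded models.

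The main obstacle is the verification of the non-degeneracy hypotheses that make this scheme apply: one must confirm that the lifted singularity is genuinely elementary and that $\pi|_M$ is a fold along the criminant (the ``well-folded'' condition), and that the affine cusp of Gauss lies in the open stratum, i.e. $\lambda\neq0$ and $\lambda\neq\tfrac1{16}$, so that the three cases exhaust the generic possibilities. Tracing this genericity back through the coefficient expression for $\lambda$ --- together with the subsidiary inequalities $\sigma\neq0$ and $\varepsilon q_{41}+2q_{13}\sigma+q_{23}\neq0$ already used in Lemma~\ref{lem_transv_aad} and Proposition~\ref{eqaaleh} --- is the one place where the computation, rather than the structural argument, carries the weight.
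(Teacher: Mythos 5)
Your proposal is correct and takes essentially the same route as the paper's proof: both lift the BDE to the surface $M=\{F=0\}$ via the Lie--Cartan field \eqref{eqLieCartanField}, linearize at the origin, and obtain the characteristic polynomial $\mu^{2}-\mu+4\lambda$ (equivalently the eigenvalues $\tfrac12\bigl(1\pm\sqrt{1-16\lambda}\bigr)$), so the split $\lambda<0$, $0<\lambda<\tfrac1{16}$, $\lambda>\tfrac1{16}$ gives folded saddle, node, and focus exactly as in the paper. Your extra care --- smoothness of $M$, the well-folded condition, and the appeal to Hartman--Grobman plus Davydov's classification for the projected configurations --- merely makes explicit the non-degeneracy hypotheses that the paper delegates to its citation of \cite{Dav1985,Davydov}.
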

\begin{proof}
This is a classical result and it can be found in~\cite{Dav1985,Davydov}. A sketch of the proof is the following. Let $p=\mfrac{dv}{du}$ and 
\begin{equation*}
F(u,v,p)=\left(-v+\lambda u^2+O(3)\right)+\left(1+\Lambda u^2+O(3)\right)p^2=0.
\end{equation*}
Consider the Lie-Cartan vector field $X(u,v,p)$ given as in~\eqref{eqLieCartanField}. We have that $X(0,0,0)=(0,0,0)$, thus $(0,0,0)$ is a singular point of $X$. The eigenvalues $\lambda_i$, $i=1,2$, of the linearization $DX(0,0)$ are given by
\begin{equation*}
\lambda_1=\frac{1}{2}\left(1+\sqrt{1-16\lambda}\right)\quad\text{and}\quad \lambda_2=\frac{1}{2}\left(1-\sqrt{1-16\lambda}\right).
\end{equation*}
Thus if $\lambda>\mfrac{1}{16}$, the projection of the integral curves of $X$ has a folded hyperbolic focus at the origin (Figures~\ref{fig1} and~\ref{fig3}, center). When $\lambda<\mfrac{1}{16}$, the product $\lambda_1\lambda_2=4\lambda$. If $\lambda<0$ we have a folded hyperbolic saddle and a folded hyperbolic node if $0<\lambda<\mfrac{1}{16}$. See Figures~\ref{fig1} and~\ref{fig3}, left a right respectively.
\end{proof}
%
%
\subsection{Affine asymptotic lines at a flat affine umbilic point}\label{subaacfumb}
In this part we are interested in the behavior of the affine asymptotic net in a neighborhood of a point where the two affine principal curvatures are equal to zero. In~\cite{BGC2020} are given the conditions required for the point $\alpha(0,0)=p\in S$ to be an affine umbilic point for $\alpha$ as in the Proposition \ref{prop_pick}.
\begin{lemma}[\cite{BGC2020}]\label{lem_umb_aff}
Let $p=\alpha(0,0)$ a non parabolic point of $S$. Then $p$ is an affine umbilic point if, and only if,  $q_{31}=-\varepsilon q_{13}$, and $q_{40}=q_{04}$.
\end{lemma}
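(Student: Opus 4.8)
The plan is to translate the affine umbilic condition — equality of the two affine principal curvatures at $p$ — into a proportionality statement between two symmetric matrices, and then to evaluate that statement at the origin using the normal form \eqref{n_f_lip-hyp}. The affine principal curvatures are the eigenvalues of the affine shape operator $(b_{ij})$, so $p$ is affine umbilic precisely when $(b_{ij})$ is a scalar multiple of the identity at $p$.

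First I would rewrite \eqref{eq_bijI_III} as $(b_{ij}) = -\operatorname{sgn}(g)\,\mathrm{III}\cdot G^{-1}$, where $G=\left[\begin{smallmatrix} g_{11} & g_{12} \\ g_{12} & g_{22}\end{smallmatrix}\right]$, $\mathrm{III}=\left[\begin{smallmatrix} l & m \\ m & n\end{smallmatrix}\right]$ and $g=g_{11}g_{22}-g_{12}^2$; this is legitimate at $p$ because $p$ is non-parabolic, so $g\neq0$ and $G$ is invertible there. Indeed the matrix appearing on the right of \eqref{eq_bijI_III} is the adjugate of $G$, which equals $g\,G^{-1}$. Since a nonzero scalar multiple of $\mathrm{III}\,G^{-1}$ is a multiple of the identity if and only if $\mathrm{III}=\mu G$ for some scalar $\mu$, the umbilic condition is equivalent to the proportionality of the two symmetric matrices $\mathrm{III}$ and $G$ at the origin. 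This is the conceptual core of the argument; everything after it is evaluation.

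Next I would compute both matrices at $(0,0)$. From \eqref{n_f_lip-hyp} one gets $L(0,0)=1$, $M(0,0)=0$, $N(0,0)=\varepsilon$, whence $\lvert LN-M^2\rvert=1$ and $G(0,0)=\operatorname{diag}(1,\varepsilon)$. For $\mathrm{III}(0,0)$ I would use the constant terms already recorded in \eqref{eql1}, \eqref{eqm1}, \eqref{eqn1} (equivalently, the values of $l(0,0),m(0,0),n(0,0)$ displayed inside the proof of Lemma~\ref{lem_transv_aad}). Proportionality of $\mathrm{III}(0,0)$ to the diagonal matrix $\operatorname{diag}(1,\varepsilon)$ is then equivalent to the two scalar equations $m(0,0)=0$ and $\varepsilon\,l(0,0)=n(0,0)$.

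Finally I would substitute and simplify, using $\varepsilon^2=1$. The first equation reads $\tfrac14(q_{31}+\varepsilon q_{13})=0$, i.e.\ $q_{31}=-\varepsilon q_{13}$; in the second, after cancelling the common $\sigma^2$ and $q_{22}$ terms, only $\tfrac14\varepsilon q_{40}=\tfrac14\varepsilon q_{04}$ survives, i.e.\ $q_{40}=q_{04}$. Each step is reversible, so these two coefficient identities are equivalent to the proportionality, and hence to $p$ being affine umbilic. I do not anticipate a real obstacle: the only care needed is bookkeeping the signs $\varepsilon=\pm1$, and reading the umbilic condition as ``$(b_{ij})$ scalar'' rather than merely ``repeated eigenvalue'' (a distinction that could matter for the indefinite Blaschke metric when $\varepsilon=-1$); the proportionality reformulation handles both cases at once.
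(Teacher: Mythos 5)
Your proof is correct, and there is nothing in the paper to compare it against: the paper does not prove this lemma, it simply imports it from \cite{BGC2020}, so your argument fills in a proof that the text omits. Your verification is self-contained and uses only formulas already displayed in the paper: from \eqref{eq_bijI_III} the right-hand factor is the adjugate $g\,G^{-1}$ of $G$, so the affine shape operator equals $-\operatorname{sgn}(g)\,\mathrm{III}\,G^{-1}$ and is a scalar multiple of the identity precisely when $\mathrm{III}=\mu G$; evaluating with $G(0,0)=\operatorname{diag}(1,\varepsilon)$ and the constant terms of \eqref{eql1}--\eqref{eqn1} (the same values recorded inside the proof of Lemma~\ref{lem_transv_aad}), the proportionality becomes $m(0,0)=0$ and $\varepsilon\,l(0,0)=n(0,0)$, which simplify exactly to $q_{31}=-\varepsilon q_{13}$ and $q_{40}=q_{04}$. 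I checked the arithmetic and it is right, and each step is reversible as you claim. One point you flag in passing deserves emphasis as genuinely necessary rather than merely cautious: in the hyperbolic case $\varepsilon=-1$ the Blaschke metric is indefinite, and the ``repeated eigenvalue'' reading of umbilicity would give at the origin the single equation $(l+n)^2=4m^2$, a codimension-one condition that cannot match the two independent equations of the statement; only the ``shape operator is scalar'' reading (equivalently, degeneracy of the affine-curvature-line BDE, which is the definition used in \cite{BGC2020}) makes the ``only if'' direction true, so your choice there is not optional bookkeeping but the crux of the equivalence.
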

\begin{definition}\label{def_fap}
A point $p$ at $S$ is called a flat affine umbilic point of $S$ if $p$ is simultaneously an affine umbilic and an affine parabolic point.
\end{definition}
Now we present the precise conditions for the point $p=\alpha(0,0)$ to be a flat affine umbilic point.
\begin{lemma}\label{lemm_faup}
Under the conditions of Lemma~\ref{lem_umb_aff}, $p=\alpha(0,0)$ is a flat affine umbilic point if, and only if, $q_{31}=-\varepsilon q_{13}$, $q_{40}=q_{04}$, and $q_{22}=-\varepsilon\left(-2\sigma^2+q_{40}\right)$.
\end{lemma}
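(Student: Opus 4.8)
The plan is to combine the characterization of affine umbilic points from Lemma~\ref{lem_umb_aff} with the characterization of affine parabolic points already extracted in the proof of Lemma~\ref{lem_transv_aad}. By Definition~\ref{def_fap}, a flat affine umbilic point is one that is simultaneously affine umbilic and affine parabolic, so the strategy is simply to intersect the two sets of defining conditions. First I would recall that Lemma~\ref{lem_umb_aff} gives the affine umbilic conditions as $q_{31}=-\varepsilon q_{13}$ together with $q_{40}=q_{04}$. Two of the three conditions claimed in the statement are therefore immediate, and the only remaining task is to show that, under the umbilic hypotheses, the affine parabolic condition is equivalent to $q_{22}=-\varepsilon\left(-2\sigma^2+q_{40}\right)$.

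To establish this, I would use the explicit values of $l(0,0)$, $m(0,0)$, and $n(0,0)$ displayed in the proof of Lemma~\ref{lem_transv_aad}, namely
\begin{align*}
l(0,0) &= \tfrac{1}{4}\varepsilon(-2\varepsilon\sigma^2+\varepsilon q_{40}+q_{22}), \\
m(0,0) &= \tfrac{1}{4}\varepsilon(\varepsilon q_{31}+q_{13}), \\
n(0,0) &= -\tfrac{1}{4}\varepsilon(2\varepsilon^2\sigma^2-\varepsilon q_{22}-q_{04}),
\end{align*}
together with the fact that $p$ is affine parabolic exactly when $(ln-m^2)(0,0)=0$ (the Gauss--Kronecker curvature vanishes, by~\eqref{eqGKCandH} and the nonvanishing of $g_{11}g_{22}-g_{12}^2$ away from Euclidean parabolic points). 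Under the umbilic condition $q_{31}=-\varepsilon q_{13}$ one has $\varepsilon q_{31}+q_{13}=\varepsilon(-\varepsilon q_{13})+q_{13}= -q_{13}+q_{13}=0$ (using $\varepsilon^2=1$), so $m(0,0)=0$. Hence the parabolic condition reduces to $l(0,0)\,n(0,0)=0$.

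Next I would argue that vanishing of the product forces vanishing of the specific factor $l(0,0)$. Imposing the second umbilic condition $q_{40}=q_{04}$ and substituting into the expressions above, one sees that $l(0,0)=\tfrac14\varepsilon(-2\varepsilon\sigma^2+\varepsilon q_{40}+q_{22})$ and $n(0,0)=-\tfrac14\varepsilon(2\sigma^2-\varepsilon q_{22}-q_{40})=\tfrac14(-2\varepsilon\sigma^2+q_{22}+\varepsilon q_{40})$ (again using $\varepsilon^2=1$), so in fact $l(0,0)$ and $n(0,0)$ are equal up to a unit factor; indeed $l(0,0)=\varepsilon\,n(0,0)$ at an umbilic point, which is exactly what one expects since the two affine principal curvatures coincide there. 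Therefore $l(0,0)\,n(0,0)=0$ is equivalent to $l(0,0)=0$, i.e. to $-2\varepsilon\sigma^2+\varepsilon q_{40}+q_{22}=0$. Solving this linear relation for $q_{22}$ and multiplying through by $\varepsilon$ yields $q_{22}=2\varepsilon\sigma^2-q_{40}\cdot\varepsilon\cdot\varepsilon$; rewriting gives $q_{22}=-\varepsilon(-2\sigma^2+q_{40})$, which is the asserted third condition.

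The converse direction is then immediate: assuming all three stated relations, the umbilic conditions of Lemma~\ref{lem_umb_aff} hold by inspection, and the third relation makes $l(0,0)=0$, hence $m(0,0)=n(0,0)\cdot$(unit)$=0$ as well, so that $(ln-m^2)(0,0)=0$ and $p$ is affine parabolic; combined with umbilicity this is precisely a flat affine umbilic point by Definition~\ref{def_fap}. The only place requiring genuine care—the main obstacle—is the algebraic bookkeeping with the sign parameter $\varepsilon$: one must consistently exploit $\varepsilon^2=1$ to see that $m(0,0)$ vanishes identically under umbilicity and that $l(0,0)$ and $n(0,0)$ are proportional, so that the single scalar equation $q_{22}=-\varepsilon(-2\sigma^2+q_{40})$ captures the parabolic locus inside the umbilic stratum. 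No deeper machinery is needed beyond these substitutions.
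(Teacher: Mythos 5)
Your proposal is correct and is essentially the paper's own argument: the paper disposes of this lemma with the single phrase ``by direct verification,'' and your write-up simply carries out that verification, intersecting the umbilic conditions of Lemma~\ref{lem_umb_aff} with the vanishing of $(ln-m^2)(0,0)$ via the explicit values of $l$, $m$, $n$ from the proof of Lemma~\ref{lem_transv_aad}, correctly noting that $m(0,0)=0$ and $l(0,0)=\varepsilon\,n(0,0)$ under umbilicity. The only blemish is the garbled intermediate expression $q_{22}=2\varepsilon\sigma^2-q_{40}\cdot\varepsilon\cdot\varepsilon$ (it should read $q_{22}=2\varepsilon\sigma^2-\varepsilon q_{40}$), but your final identity $q_{22}=-\varepsilon(-2\sigma^2+q_{40})$ is the correct one, so this is a typo rather than a gap.
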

\begin{proof}
By direct verification.
\end{proof}
Affine umbilic points are (some of the) singularities of affine principal nets. Since at a flat affine umbilic point $p$ we have that $l(p)=m(p)=n(p)=0$, then $k^{\mathrm{aff}}_{n}(w)=0$ for all $w\in T_pS$. Therefore, it follows that flat affine umbilic points are singularities of the affine asymptotic net.
\begin{proposition}\label{prop_edb_mors_fornor}
Let $p=\alpha(0,0)$ a flat affine umbilic point of $S$. Then by a change of coordinates the binary differential equation of the affine asymptotic lines can be written as
\begin{equation}\label{eq_edb_mors_fornor}
 \left(-\varepsilon_1 v+O(2)\right)du^2+\left(-2\varepsilon_1 u+O(2)\right)dudv+\left(v+O(2)\right)dv^2=0,
\end{equation}
where $\varepsilon_1=\pm 1$.
\end{proposition}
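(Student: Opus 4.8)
The plan is to start from the binary differential equation~\eqref{eqlaa} with coefficients $l,m,n$ as in~\eqref{eql1},~\eqref{eqm1},~\eqref{eqn1}, and substitute the flat affine umbilic conditions from Lemma~\ref{lemm_faup}, namely $q_{31}=-\varepsilon q_{13}$, $q_{40}=q_{04}$, and $q_{22}=-\varepsilon(-2\sigma^2+q_{40})$. First I would verify that under these conditions the zeroth-order terms $l(0,0)$, $m(0,0)$, $n(0,0)$ all vanish, so that $l$, $m$, $n$ each start at linear order; this is exactly the statement that $p$ is a singularity of the affine asymptotic net, already observed in the paragraph preceding the proposition. I would then compute the linear parts explicitly, writing $l=l_{10}u+l_{01}v+O(2)$, $m=m_{10}u+m_{01}v+O(2)$, $n=n_{10}u+n_{01}v+O(2)$, with the six coefficients expressed in terms of $\sigma$ and the higher $q_{ij}$ via~\eqref{eql1}--\eqref{eqn1} evaluated at the umbilic.

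The core of the argument is then a normal-form reduction: I would seek a germ of a linear change of coordinates in the source $(u,v)$ (and, if needed, multiplication of the whole equation by a nonvanishing function) bringing the linear parts of $(l,m,n)$ to the target form $(-\varepsilon_1 v, -\varepsilon_1 u, v)$ appearing in~\eqref{eq_edb_mors_fornor}. The key structural feature to extract is that the $1$-jets of $l,m,n$, viewed as three linear forms, are governed by the discriminant function $m^2-ln$, whose $2$-jet at the umbilic is a quadratic form in $(u,v)$; the sign $\varepsilon_1=\pm 1$ records whether this quadratic form is of definite or indefinite type (equivalently, the Morse index of the discriminant at the umbilic). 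I expect the admissible normalization to hinge on showing that the linear parts are generically linearly independent enough to realize the $2\times 2$ pattern $\bigl(\begin{smallmatrix} l_{10}&l_{01}\\ m_{10}&m_{01}\end{smallmatrix}\bigr)$-type relations forcing the cross pattern $(-\varepsilon_1 v,-\varepsilon_1 u, v)$ after a rotation/scaling.

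The step I expect to be the main obstacle is precisely this normalization: checking that a \emph{linear} coordinate change suffices (rather than a genuinely nonlinear one) and that the coefficients coming out of~\eqref{eql1}--\eqref{eqn1} at the umbilic satisfy the algebraic relations guaranteeing that the linear parts can be simultaneously brought to $-\varepsilon_1 v$, $-\varepsilon_1 u$, and $v$. Concretely, the matching of the $uv$-coefficient in $m$ to $-2\varepsilon_1 u$ and the opposite signs on the $v$-coefficients of $l$ and $n$ are not automatic and must follow from the specific form of the derivatives of $l,m,n$ under the umbilic constraints; I would verify these sign and proportionality relations by direct substitution, treating $\varepsilon^2=1$ throughout. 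Once the linear parts are normalized, the higher-order terms are absorbed into the $O(2)$ remainders by construction, and the equation takes the stated form~\eqref{eq_edb_mors_fornor}, completing the proof.
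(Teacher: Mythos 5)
Your overall strategy coincides with the paper's proof: substitute the flat affine umbilic conditions of Lemma~\ref{lemm_faup} into the BDE, observe that $l,m,n$ all vanish at the origin, compute their linear parts, and reduce to the stated form by a local diffeomorphism whose linear part is chosen suitably; you also correctly identify that $\varepsilon_1$ records the Morse type of the discriminant. So the route is the same. The issue is the step you yourself flag as the main obstacle: the justification you sketch for it (``the linear parts are generically linearly independent enough'' plus the Morse type of $m^2-ln$) is not sufficient, and would in fact fail for a general BDE. A dimension count makes this concrete: the space of $1$-jets of BDEs with all three coefficients vanishing at the origin is $6$-dimensional, while the group acting (linear changes of coordinates together with multiplication of the equation by a nonzero constant) has dimension $5$; hence a \emph{generic} such $1$-jet has moduli and cannot be brought to the fixed form~\eqref{eq_edb_mors_fornor}. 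The Morse type of the discriminant is not a complete invariant here.

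What the paper's direct substitution reveals — and what is missing from your plan — is the special structure that removes these moduli. Writing the equation as $A\,du^2+2B\,du\,dv+C\,dv^2=0$, the linear parts computed under the umbilic constraints satisfy the cross-relations $\partial_v A=\partial_u B$ and $\partial_v B=\partial_u C$; explicitly
\begin{equation*}
A=a_{10}u+a_{01}v+O(2),\qquad B=a_{01}u+b_{01}v+O(2),\qquad C=b_{01}u+c_{01}v+O(2),
\end{equation*}
so only \emph{four} independent constants occur, and the $1$-jet is the Hessian of a single binary cubic. This $4$-dimensional class is preserved by coordinate changes, and the group acts on it (for nondegenerate cubics) with exactly two open orbits — cubics with three real roots versus one real root — which are precisely the two normal forms $\varepsilon_1=\pm1$ of~\eqref{eq_edb_mors_fornor}; this is also why the result parallels the Euclidean flat umbilic case of Bruce--Tari cited in the paper. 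Your proposed ``verification by direct substitution'' would, if carried out, uncover exactly these relations, so the plan is completable; but as written, the decisive structural fact is absent, and without it the normalization you describe is unjustified.
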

\begin{proof}
Note that we are in the conditions of the Lemma~\ref{lemm_faup}, then the BDE of the asymptotic lines is given by 
\begin{equation}\label{eq_prop_edb_mors_fornor}
 A(u,v)du^2+2B(u,v)dudv+C(u,v)dv^2=0,
\end{equation}
where
\begin{align*}
A(u,v) &= a_{10}u+a_{01}v + O(2), \\
B(u,v) &= a_{01}u+b_{01}v + O(2), \\
C(u,v) &= b_{01}u+c_{01}v + O(2), 
\end{align*}
with
\begin{align*}
a_{10} &= -6\varepsilon\sigma^3+6\varepsilon q_{40}\sigma-\varepsilon q_{50}-q_{32}, \\
a_{01} &= -\varepsilon q_{41}-2q_{13}\sigma-q_{23}, \\
b_{01} &= -6\sigma^3+2q_{40}\sigma-\varepsilon q_{32}-q_{14}, \\
c_{01} &= -6\varepsilon q_{13}\sigma-\varepsilon q_{23}-q_{05}.
\end{align*}
We apply a local diffeomorphism at $(0,0)$, $\mu\colon (x,y)\mapsto(u(x,y),v(x,y))$ where
\begin{equation}\label{eq_diff_loc_morse}
u(x,y)=s_{10}x+s_{01}y+O(2)\quad\text{and}\quad v(x,y)=t_{10}x+t_{01}y+O(2),
\end{equation}
with $s_{10}t_{01}-s_{01}t_{10}\neq0$. By a suitable choice of the coefficients of $\mu$ we obtain that the BDE takes the form given in~\eqref{eq_edb_mors_fornor}.
\end{proof}
\begin{proposition}\label{prop_aal_morse}
The configuration of the affine asymptotic lines near at a flat affine umbilic point is topologically equivalent to one of the two models shown in Figure~\ref{fig6}.
\begin{figure}[htb!]
	\centering
	\includegraphics[width=.75\textwidth,clip]{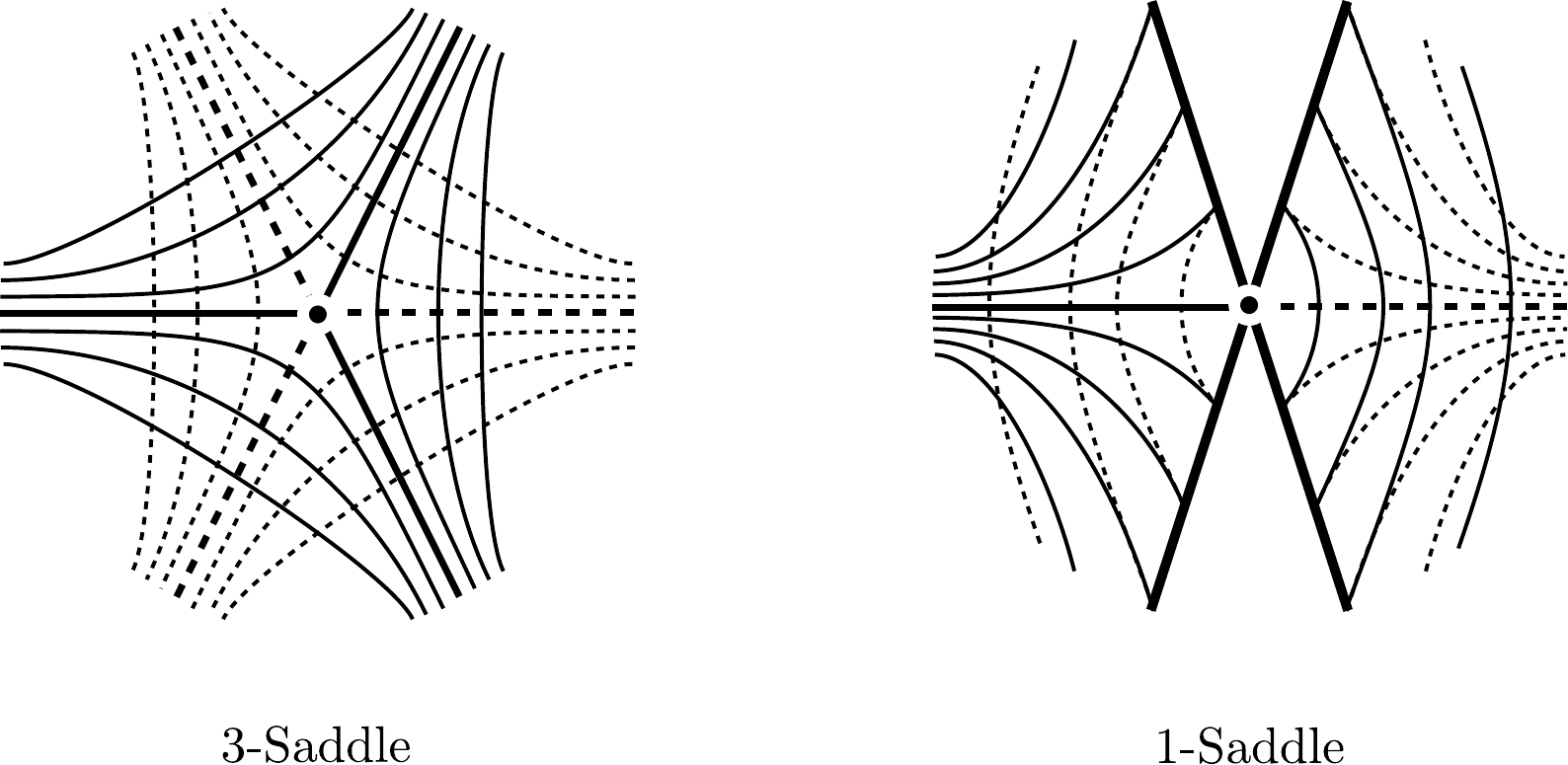}
	\caption{\small Affine asymptotic nets in a neighborhood of a flat affine umbilic point.}
	\label{fig6}
\end{figure}
\end{proposition}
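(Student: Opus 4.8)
The plan is to analyze the binary differential equation brought into the normal form \eqref{eq_edb_mors_fornor} of Proposition~\ref{prop_edb_mors_fornor} using the Lie--Cartan lift, exactly as was done for the affine cusp of Gauss in Theorem~\ref{theo_laa_nop}, but now at a point where all three coefficients $l$, $m$, $n$ vanish simultaneously. Writing $p=\mfrac{dv}{du}$ and
\begin{equation*}
F(u,v,p)=\left(-\varepsilon_1 v+O(2)\right)+2\left(-\varepsilon_1 u+O(2)\right)p+\left(v+O(2)\right)p^2,
\end{equation*}
the criminant (the surface $M$ defined by $F=F_p=0$) no longer projects to a smooth curve: since the 1-jets of $A$, $B$, $C$ all vanish at the origin, the discriminant $B^2-AC$ has a degenerate singularity there. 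First I would identify the equilibria of the Lie--Cartan field $X$ from \eqref{eqLieCartanField} lying in the exceptional fibre $\{u=v=0\}\times\mathbb{R}_p$. Because $F(0,0,p)=0$ identically at first order, the whole fibre is critical, so the correct move is to lift to the projective line of directions and study $X$ on the blown-up surface $M$, locating the finitely many singular points where the direction $p$ (or its reciprocal chart) is stationary.

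The key steps, in order, are: first, compute the singular points of $X$ on $M$ in the $p$-chart and in the reciprocal $1/p$-chart, so as not to miss the vertical direction; second, linearize $X$ at each such singular point and compute the eigenvalues of $DX$; third, read off the hyperbolic/parabolic/elliptic nature and the signs of the eigenvalue ratios, which determine whether each lifted equilibrium is a saddle or a node for the foliation on $M$; and fourth, project down to the $(u,v)$-plane and assemble the local phase portrait. The distinguishing invariant between the two cases in Figure~\ref{fig6} is controlled by the sign $\varepsilon_1=\pm1$: the two normal forms in \eqref{eq_edb_mors_fornor} differ precisely in this sign, and I expect the count and indices of the singular points of $X$ on $M$ to change accordingly, producing the two topologically distinct nets. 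This is the same mechanism by which the folded saddle/node/focus trichotomy arose in Theorem~\ref{theo_laa_nop}, and I would cite \cite{Dav1985,Davydov} (and \cite{Bruce1989,Bruce1995}) for the structural-stability and topological-equivalence machinery that guarantees the higher-order $O(2)$ terms do not alter the local picture.

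The main obstacle will be the degeneracy of the discriminant at the flat affine umbilic point: unlike the cusp-of-Gauss case, where the criminant was a fold and the reduction to Davydov's normal form \eqref{eq_edb_dav} was direct, here the vanishing of the full $1$-jet of $(A,B,C)$ means the linear part of $F$ in $(u,v,p)$ is degenerate along the fibre, so the naive linearization of $X$ is nonhyperbolic and a single blow-up may be insufficient. I would therefore expect to carry out the analysis on the lifted surface $M$ directly, where the equilibria \emph{do} become hyperbolic after passing to the induced foliation, and then invoke the Hartman--Grobman theorem fibrewise together with the fact that topological type is determined by the $1$-jets of $A$, $B$, $C$ (which are exactly the data recorded in the coefficients $a_{10},a_{01},b_{01},c_{01}$ in the proof of Proposition~\ref{prop_edb_mors_fornor}). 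Verifying that these two sign-distinguished $1$-jets are nondegenerate in the sense required by the BDE stability theory, and that no further modulus appears, is where the real work lies; the rest is the standard projection-and-assembly argument.
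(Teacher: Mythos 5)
Your proposal follows essentially the same route as the paper's proof: the paper likewise takes the normal form \eqref{eq_edb_mors_fornor}, forms $F(u,v,p)=-\varepsilon_1 v-2\varepsilon_1 up+vp^2$, studies the Lie--Cartan field \eqref{eqLieCartanField}, finds its equilibria on the exceptional fibre $\{u=v=0\}$ as the roots of $p(p^2-3\varepsilon_1)=0$ (three for $\varepsilon_1=1$, one for $\varepsilon_1=-1$), verifies they are hyperbolic saddles, and projects down, using the Morse-type discriminant $\delta=4(u^2+\varepsilon_1 v^2)+O(3)$ to determine where the net is defined. Your concern that the degeneracy might force a further blow-up is unfounded --- the lifted equilibria are already hyperbolic (eigenvalues $2$ and $-3$ at the unique saddle when $\varepsilon_1=-1$), so the Lie--Cartan lift alone suffices, exactly as your fallback anticipates.
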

\begin{proof}
Note that the discriminant function $\delta\colon\mathbb{R}^2\rightarrow\mathbb{R}$ of the BDE~\eqref{eq_edb_mors_fornor}, given by $\delta(u,v)=4(x^2+\varepsilon_1y^2)+O(3)$ satisfies that the discriminant set of the BDE~\eqref{eq_edb_mors_fornor} coincides with the zero level of $\delta$, this is with $\delta^{-1}(0)$. The function $\delta$ has a Morse singularity at $(0,0)$ and locally, when $\varepsilon_1=1$ the discriminant set is an isolated point $(0,0)$ and the asymptotic net is defined in a neighborhood of $(0,0)$. When $\varepsilon_1=-1$ the discriminant set is given by two smooth curves that cross each other transversally at $(0,0)$ and this determines four regions in the plane where only in two to them the integral curves of~\eqref{eq_edb_mors_fornor} are defined (see Figure~\ref{fig7}).
\begin{figure}[htb!]
	\centering
	\includegraphics[width=.55\textwidth,clip]{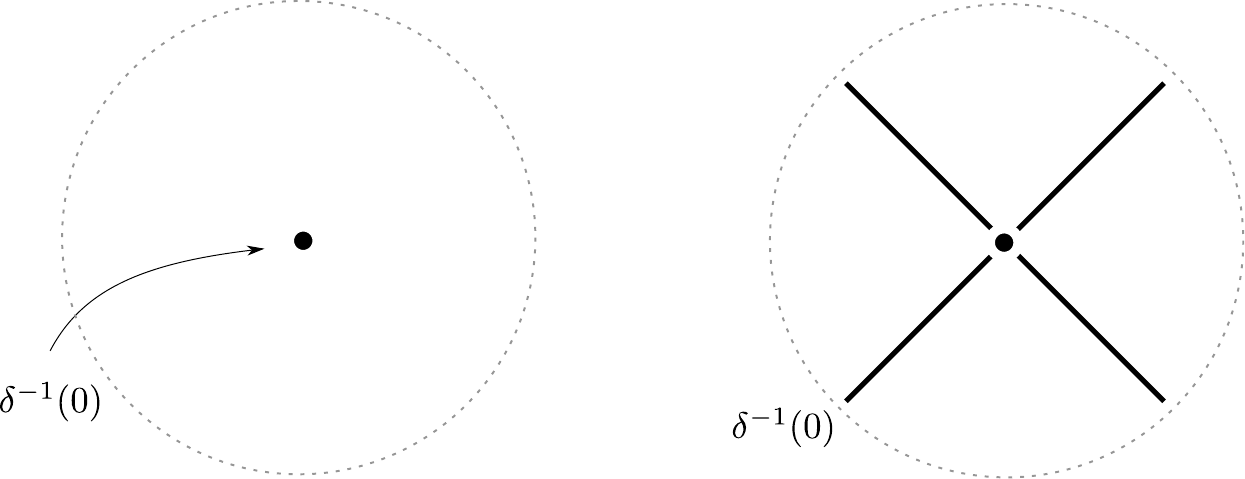}
	\caption{\small The discriminant set in a neighborhood of $(0,0)$. At the left hand side the set $\delta^{-1}(0)$ is locally an isolated point. At the right hand side the set $\delta^{-1}(0)$ induces a stratification formed by four open regions with $\delta>0$ in two of them.}
	\label{fig7}
\end{figure}
Let $p=\mfrac{dy}{dx}$ and consider the function $F(x,y,p)=-\varepsilon_1y-2\varepsilon_1up+vp^2=0$, and the Lie-Cartan vector field $X(u,v,p)$ given as in~\eqref{eqLieCartanField} again. When $u=v=0$, the vector field $X(0,0,p)=(0,0,p(p^2-3\varepsilon))$, thus the singularities of $X$ are given by $p(p^2-3\varepsilon_1)=0$. If $\varepsilon_1=-1$ then $X$ has only one singularity at $(0,0,0)$ whose eigenvalues of the projection of their linearization are $2$ and $-3$, this is one topological saddle (see Figure~\ref{fig6}, right). When $\varepsilon_1=1$ the vector field $X$ has three singularities at $(0,0,p_i)$, where $p_i\in\left\{-\sqrt{3},0,\sqrt{3}\right\}$ and it is easy to see that all the singularities are topological saddles (see Figure~\ref{fig6}, left). This completes the proof.
\end{proof}
\begin{remark}
In~\cite{Bruce1995} a Euclidean version of Proposition~\ref{prop_edb_mors_fornor} is given. More precisely, in the Euclidean case we have the same topological behavior of asymptotic lines near to a flat umbilic point.
\end{remark}
%
%
%

%
%


\section{Blaschke's asymptotic lines in a neighborhood of the Euclidean parabolic set}\label{sec:section5}
In this section we consider the behavior of the affine asymptotic nets near to the parabolic set. The parabolic set is generically formed by smooth curves (see~\cite{Banchoff,Bleecker1978,Izumiya2016}). Here we will consider the case when the parabolic set is a regular curve. As before, let us consider a smooth surface $S$ parametrized in a Monge chart $\alpha(u,v)=(u,v,h(u,v))$. In~\cite{BGC2020} it is shown that in this chart the expressions for $l$, $m$ and $n$ are
\begin{align*}
l &=  -\frac{1}{16}\frac{1}{(h_{uu}h_{vv}-h_{uv}^2)^{2}} \Bigl(-4(h_{vv}h_{uuuu}-2h_{uv}h_{uuuv})(h_{uu}h_{vv}-h_{uv}^2) \\
 &\qquad - 4h_{uu}(h_{uu}h_{vv}-h_{uv}^2)h_{uuvv} + 7h_{vv}^2h_{uuu}^2+3h_{uu}^2h_{uvv}^2 \\
 &\qquad + \bigl(-28h_{uuv}h_{uv}h_{vv}+2(h_{uu}h_{vv}+8h_{uv}^2)h_{uvv}-4h_{vvv}h_{uu}h_{uv}\bigr)h_{uuu} \\
 &\qquad + 12(h_{uu}h_{vv}+h_{uv}^2)h_{uuv}^2 + 4(h_{uu}^2h_{vvv}-6h_{uu}h_{uv}h_{uvv})h_{uuv}\Bigr),
\end{align*}
\begin{align*}
m &= -\frac{1}{16}\frac{1}{(h_{uu}h_{vv}-h_{uv}^2)^{2}} \Bigl(-4(h_{vv}h_{uuuv}-2h_{uv}h_{uuvv})(h_{uu}h_{vv}-h_{uv}^2) \\
 &\qquad + (7h_{vv}^2h_{uuv}-10h_{uv}h_{vv}h_{uvv}+(-h_{uu}h_{vv}+4h_{uv}^2)h_{vvv})h_{uuu} \\
 &\qquad - 4\left(h_{uu}h_{vv}-h_{uv}^2\right)h_{uvvv}-18h_{uuv}^2h_{uv}h_{vv}+7h_{uvv}h_{vvv}h_{uu}^2 \\
 &\qquad + \bigl((15h_{uu}h_{vv}+24h_{uv}^2)h_{uvv}-10h_{uu}h_{uv}h_{vvv}\bigr)h_{uuv}-18h_{uvv}^2h_{uu}h_{uv}\Bigr),
\end{align*}
\begin{align*}
n &= -\frac{1}{16}\frac{1}{(h_{uu}h_{vv}-h_{uv}^2)^{2}} \Bigl(-4(h_{vv}h_{uuvv}-2h_{uv}h_{uvvv})(h_{uu}h_{vv}-h_{uv}^2)\\
 &\qquad - 4h_{uu}h_{vvvv}(h_{uu}h_{vv}-h_{uv}^2)+4(-h_{uv}h_{vv}h_{vvv}+h_{uvv}h_{vv}^2)h_{uuu}\\
 &\qquad + 3h_{uuv}^2h_{vv}^2+2\bigl(-12h_{uv}h_{vv}h_{uvv}+(h_{uu}h_{vv}+8h_{uv}^2)h_{vvv}\bigr)h_{uuv}\\
 &\qquad + 12(h_{uu}h_{vv}+h_{uv}^2)h_{uvv}^2-28h_{uvv}h_{vvv}h_{uu}h_{uv}+7h_{vvv}^2h_{uu}^2\Bigr).
\end{align*}
Note that the BDE~\eqref{eqlaa} is a homogeneous equation, thus it can be extended to the parabolic set, defined by $h_{uu}h_{vv}-h_{uv}^2=0$ through the following equation
\begin{equation}\label{eqlaaps}
A(u,v)du^2+2B(u,v)dudv+C(u,v)dv^2=0
\end{equation}
where $A,B,C$ are the numerators of $l,m,n$ respectively. As the singular set of the affine asymptotic net is exactly the affine parabolic set, we need known how is the interaction between them. 
In~\cite{Davis2008} the author studies some relation between this sets and the next result was obtained.
\begin{proposition}[\cite{Davis2008}]\label{prop_davis_parsets}
Consider a point which is both a parabolic point and an affine parabolic point. We have
\begin{enumerate}
\item If the parabolic point is an ordinary parabolic point or a non-degenerate cusp of Gauss then the affine parabolic set will be non-singular.
\item If the parabolic point is an ordinary parabolic point then the parabolic set and the affine parabolic set will meet transversally.
\item If the parabolic point is an cusp of Gauss, then the parabolic set and the affine parabolic set will be tangent.
\end{enumerate}
\end{proposition}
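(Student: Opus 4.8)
The plan is to establish a dictionary between the flat geometry of the original surface $S$ and the flat geometry of its co-normal surface $S^{\nu}$, exactly as was done for the affine asymptotic lines and affine parabolic points in Theorem~\ref{th_rel_geo} and Theorem~\ref{th_rel_geo_cupGauss}. The key observation is that the three conditions in Proposition~\ref{prop_davis_parsets} concern, at a point that is \emph{both} parabolic and affine parabolic, how the \emph{Euclidean} parabolic set $\mathbb{P}_{\alpha}$ (where $K^{\mathrm{e}}=0$) interacts with the \emph{affine} parabolic set $\mathbb{P}^{a}_{\alpha}$ (where $\mathcal{K}^{\mathrm{aff}}=0$). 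By Theorem~\ref{th_rel_geo}\,\itm{ii}, $p$ is affine parabolic if and only if $\nu(p)$ is parabolic on $S^{\nu}$, so the affine parabolic set corresponds precisely to the Euclidean parabolic set of the co-normal surface under the conormal map $\nu$.

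First I would fix a Monge chart $\alpha(u,v)=(u,v,h(u,v))$ and work with the explicit coefficients $l,m,n$ displayed above, together with the Euclidean quantities $L,M,N$; the two parabolic sets are then the zero loci of $\mathcal{K}^{\mathrm{aff}}=(ln-m^2)/(g_{11}g_{22}-g_{12}^2)$ (equivalently the numerator $A C - B^2$ using~\eqref{eqlaaps}) and of $K^{\mathrm{e}}=(LN-M^2)/(\cdots)$, respectively. For item~\itm{i}, at an ordinary parabolic point the Euclidean contact is of type $A_2$ (Table~\ref{table1}): the height function has an $A_2$ singularity, the parabolic set is a regular curve, and the asymptotic direction is transversal to it. Under the conormal correspondence this translates into the affine height function $h^a_{\mathbf{w}}$ having an $A_2$ singularity at $\nu(p)$ (Table~\ref{table2}), which by definition means that the affine parabolic set is non-singular near $p$. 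The non-degenerate cusp of Gauss case corresponds to an $A_3^{\pm}$ singularity, where the parabolic set is still a regular curve, again forcing $\mathbb{P}^{a}_{\alpha}$ to be non-singular. In both cases the conclusion is that $\nabla(ln-m^2)\neq 0$ (equivalently $\nabla(AC-B^2)\neq 0$) at $p$, which I would verify directly from the displayed coefficients after imposing the defining relations.

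For items~\itm{ii} and~\itm{iii} the plan is to compute the tangent directions of the two parabolic curves at $p$ and compare them. The tangent line of $\mathbb{P}_{\alpha}$ is the kernel of $\nabla(LN-M^2)$ and the tangent line of $\mathbb{P}^{a}_{\alpha}$ is the kernel of $\nabla(ln-m^2)$, both evaluated at the origin. Transversality in~\itm{ii} amounts to showing these two gradients are linearly independent at an ordinary parabolic point, while tangency in~\itm{iii} amounts to showing they are proportional at a cusp of Gauss. The cleanest route is to exploit the geometric characterizations in Tables~\ref{table1} and~\ref{table2}: at an ordinary parabolic point the Euclidean asymptotic direction is transversal to $\mathbb{P}_{\alpha}$ (type $A_2$), whereas at a cusp of Gauss it is tangent to $\mathbb{P}_{\alpha}$ (type $A_3$); by Theorem~\ref{th_rel_geo_cupGauss} and the conormal dictionary the affine asymptotic direction and the tangent of $\mathbb{P}^{a}_{\alpha}$ inherit the corresponding alignment, which is what yields transversality in one case and tangency in the other.

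The main obstacle I anticipate is the bookkeeping in the explicit coefficient computation: both $LN-M^2$ and $ln-m^2$ are large rational expressions in the higher derivatives of $h$, and extracting their gradients at the parabolic point while simultaneously imposing the parabolic and affine-parabolic conditions is delicate. The difficulty is compounded by the fact that the affine quantities $l,m,n$ involve derivatives of $h$ up to order four, so the comparison of tangent directions requires controlling the $3$-jet of $h$ carefully and distinguishing the generic ordinary case from the cusp-of-Gauss degeneracy precisely when the relevant $A_2$ versus $A_3$ condition changes. I would therefore lean on the height-function formulation and the established correspondence to minimize direct computation, reducing each claim to a statement about singularity types of $h_{\mathbf{w}}$ and $h^a_{\mathbf{w}}$ that the dictionary already translates between the two surfaces.
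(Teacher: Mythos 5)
Your central device---passing to the co-normal surface and reading off singularity types from the height-function dictionary (Theorems~\ref{th_rel_geo}, \ref{th_rel_geo_cupGauss} and Tables~\ref{table1}, \ref{table2})---is unavailable at exactly the points this proposition is about. The co-normal vector is $\nu=\left|K_\alpha\right|^{-\frac{1}{4}}N_\alpha$ by~\eqref{eq_conorm}, so it blows up as $K_\alpha\to 0$; this is why the paper defines the co-normal surface as $\nu\left(S\,\backslash\,\mathbb{P}_{\alpha}\right)$ in Definition~\ref{defconsup}, explicitly removing the Euclidean parabolic set, and why Figure~\ref{fig15} shows the co-normal surface running off to infinity as one approaches $\mathbb{P}_\alpha$. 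The proposition, however, hypothesizes a point that is \emph{both} Euclidean parabolic and affine parabolic, i.e.\ a point of $\mathbb{P}_\alpha$: there $\nu(p)$ does not exist, the affine height function~\eqref{eq_aff_supp} is undefined, and the coefficients $l,m,n$ themselves diverge (their denominators are powers of $h_{uu}h_{vv}-h_{uv}^2$), which is precisely why Section~\ref{sec:section5} must work with the homogenized coefficients $A,B,C$ of~\eqref{eqlaaps}. So Theorems~\ref{th_rel_geo} and~\ref{th_rel_geo_cupGauss} cannot be invoked here even as a starting point.

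There is a second, independent gap: even away from $\mathbb{P}_\alpha$, the dictionary only converts \emph{affine} data on $S$ into \emph{Euclidean} data on $S^{\nu}$; it never converts the Euclidean singularity type of $h_{\mathbf{w}}$ on $S$ into the singularity type of $h^a_{\mathbf{w}}$. Your step ``the Euclidean $A_2$ at $p$ translates into an $A_2$ of the affine height function at $\nu(p)$'' asserts exactly such a conversion, and it is unjustified: the Euclidean and affine contact conditions at a point are logically independent, which is the content of Remark~\ref{rem_par_sets} (a parabolic point need not even be affine parabolic). What does work is the route you relegate to a fallback, and it is in effect how the paper proceeds in its own computations (the paper itself does not reprove this proposition; it only cites Chapter~9 of~\cite{Davis2008}): fix a Monge normal form adapted to the parabolic point (for the cusp of Gauss, \eqref{eq_para_parb_cg}), compute the numerator $LN-M^2$ of $K^{\mathrm{e}}$ and the homogenized discriminant $B^2-AC$ from~\eqref{eqlaaps}, verify the nonvanishing of the gradient of the latter for item~\itm{i}, and compare the jets of the two zero curves for items~\itm{ii} and~\itm{iii}. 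Indeed, the paper's explicit comparison $v=\frac{q_{21}^2-6q_{40}}{q_{21}}u^2+\text{h.o.t.}$ versus $v=2\,\frac{4q_{21}^2-17q_{40}}{q_{21}}u^2+\text{h.o.t.}$ is exactly the verification of the tangency claimed in item~\itm{iii}.
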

\begin{proof}
Can be found in~\cite[Chapter 9]{Davis2008}.
\end{proof}
\begin{remark}\label{rem_par_sets}
The condition required in Proposition~\ref{prop_davis_parsets} that a point should be both parabolic and affine parabolic point is very restrictive. In general, if a point $p\in S$ is parabolic it does not have to be affine parabolic. This can be verified by direct calculations.
\end{remark}
By Remark~\ref{rem_par_sets} and Proposition~\ref{prop_davis_parsets}~\itm{iii} we conclude that the next step in our analysis should be to consider the neighborhood of a cusp of Gauss point.
%
%
\subsection{Affine asymptotic lines near to a cusp of Gauss point}
We will adopt the normal form~\eqref{eq_para_parb_cg} below for the required analysis 
\begin{align}\label{eq_para_parb_cg}
 h(u,v) & = v^2+q_{21}u^2v+q_{03}v^3+q_{40}u^4+q_{31}u^3v \notag \\
 &\qquad + q_{22}u^2v^2+q_{13}uv^3+q_{04}v^4+q_{50}u^5+q_{41}u^4v+q_{32}u^3v^2 \\
 &\qquad + q_{23}u^2v^3+ q_{14}uv^4+q_{05}v^5+q_{60}u^6+ q_{51}u^5v+q_{42}u^4v^2 \notag \\
 &\qquad + q_{33}u^3v^3+q_{24}u^2v^4+q_{15}uv^5+q_{06}v^6+O(7), \notag
\end{align}
with $q_{21}^2-4q_{40}\neq0$. The reason is that under these conditions $\alpha(0,0)$ is a cusp of Gauss point.
\begin{proposition}
Let $p=\alpha(0,0)$ a cusp of Gauss point, then $p$ is also a affine cusp of Gauss point.
\end{proposition}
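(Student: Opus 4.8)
The plan is to verify the two clauses of Definition~\ref{def_acg} directly at the origin: that $p=\alpha(0,0)$ lies on the affine parabolic set, and that the unique affine asymptotic direction there is tangent to that set. Since $l,m,n$ all carry the factor $(h_{uu}h_{vv}-h_{uv}^2)^{-2}$ and the origin is a Euclidean parabolic point, I would work throughout with the extended equation~\eqref{eqlaaps}, whose coefficients $A,B,C$ are the numerators of $l,m,n$; its discriminant $\{B^2-AC=0\}$ is the proper model of the affine parabolic set across the Euclidean parabolic curve. The co-normal route via Theorem~\ref{th_rel_geo_cupGauss} is unavailable here, because $S^{\nu}$ is defined only off $\mathbb{P}_\alpha$ in Definition~\ref{defconsup}.

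The computational core is to evaluate $A,B,C$ at the origin from the normal form~\eqref{eq_para_parb_cg}. There the Hessian of $h$ is $\mathrm{diag}(0,2)$, so $h_{uu}(0)=h_{uv}(0)=0$ and $h_{vv}(0)=2$, and among the third derivatives only $h_{uuv}(0)=2q_{21}$ fails to vanish. Consequently every monomial in $A$ and in $B$ contains a factor vanishing at $0$, giving $A(0)=B(0)=0$, whereas $C$ retains the monomial $3h_{uuv}^2h_{vv}^2$, so $C(0)=48q_{21}^2$. The coefficient $q_{21}$ is nonzero: the gradient of $h_{uu}h_{vv}-h_{uv}^2$ at the origin is $(0,4q_{21})$, and regularity of the parabolic curve forces $q_{21}\neq0$. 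Two facts follow at once. First, $B^2-AC$ vanishes at $0$, so $p$ is an affine parabolic point. Second, the extended equation at $0$ reduces to $C(0)\,dv^2=0$, so the affine asymptotic direction is $dv=0$, the $u$-direction; this is exactly the Euclidean asymptotic direction, read off from the degenerate second fundamental form $2\,dv^2$ at $p$.

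To conclude I would assemble three tangency facts at $p$. Because $p$ is simultaneously a parabolic point and (by the previous paragraph) an affine parabolic point, and is a non-degenerate cusp of Gauss ($q_{21}^2-4q_{40}\neq0$), Proposition~\ref{prop_davis_parsets} applies: by~\itm{i} the affine parabolic set is non-singular at $p$, and by~\itm{iii} it is tangent to the Euclidean parabolic set there. The cusp-of-Gauss hypothesis means the Euclidean asymptotic direction is tangent to the Euclidean parabolic set, while the computation above identifies the affine asymptotic direction with the Euclidean one. Chaining these, the affine asymptotic direction coincides with the common tangent line of the two parabolic sets, hence is tangent to the affine parabolic set; this is precisely the assertion that $p$ is an affine cusp of Gauss.

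The main obstacle is bookkeeping rather than conceptual: the expressions for $A,B,C$ are long, and one must confirm that they collapse exactly as claimed and that the two direction identifications are genuine equalities, not merely agreements to leading order. A self-contained alternative to the appeal to Proposition~\ref{prop_davis_parsets} is to differentiate the discriminant: using $A(0)=B(0)=0$ one gets $\nabla(B^2-AC)(0)=-C(0)\,\nabla A(0)$, so the tangent to $\{B^2-AC=0\}$ at $p$ is the kernel of $\nabla A(0)$. A term-by-term inspection gives $A_u(0)=0$, the relevant vanishings being $h_{uu}(0)=h_{uv}(0)=h_{uuu}(0)=0$ together with $\partial_u(h_{uu}h_{vv}-h_{uv}^2)(0)=0$, so that tangent line is again $dv=0$, matching the affine asymptotic direction. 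This route trades Davis' proposition for the heavier derivative computation, and its delicate point is checking $A_v(0)\neq0$, which guarantees that the affine parabolic set is regular at $p$.
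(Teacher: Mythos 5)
Your proposal is correct, and its computational core is exactly the paper's: evaluate the extended equation~\eqref{eqlaaps} on the normal form~\eqref{eq_para_parb_cg} at the origin, find $A(0,0)=B(0,0)=0$ while $C(0,0)$ is a nonzero multiple of $q_{21}^2$ (the paper gets $-48q_{21}^2$, you get $+48q_{21}^2$; the discrepancy is only a sign/scaling convention in what counts as the ``numerator'' and is immaterial since the form is set to zero), so the BDE degenerates to $dv^2=0$ and the origin is a singular point of the equation whose unique affine asymptotic direction is $dv=0$. Where you genuinely go beyond the paper is on the second clause of Definition~\ref{def_acg}: the paper stops at the displayed degeneration and declares ``the result follows,'' leaving the tangency of $dv=0$ to the affine parabolic set to be read off only from the subsequent theorem, where that set is computed to be $v=2(4q_{21}^2-17q_{40})u^2/q_{21}+\cdots$, tangent to the $u$-axis. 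You close this gap in two independent ways: by chaining Proposition~\ref{prop_davis_parsets}~\itm{i},~\itm{iii} (non-singularity of the affine parabolic set and its tangency to the Euclidean parabolic set at a non-degenerate cusp of Gauss) with your identification of the affine and Euclidean asymptotic directions; and, self-containedly, via $\nabla(B^2-AC)(0)=-C(0)\nabla A(0)$ together with $A_u(0)=0$, so the affine parabolic set is tangent to $dv=0$ provided $A_v(0)\neq0$ --- which indeed holds, since the paper's own expansion gives $A_v(0)=-192q_{21}(q_{21}^2-4q_{40})\neq0$ under the hypotheses $q_{21}\neq0$ and $q_{21}^2-4q_{40}\neq0$. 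Your explicit justification of $q_{21}\neq0$ from regularity of the parabolic curve, and your remark that Theorem~\ref{th_rel_geo_cupGauss} cannot be invoked here because $S^{\nu}$ is undefined on $\mathbb{P}_{\alpha}$, are both points the paper passes over silently. In short, the paper's proof buys brevity; yours buys a complete verification of both clauses of the definition.
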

\begin{proof}
Evaluating the expression~\eqref{eq_para_parb_cg} in~\eqref{eqlaaps} at $(u,v)=(0,0)$ we obtain
\begin{equation*}
A(0,0)du^2+2B(0,0)dudv+C(0,0)dv^2=-48q_{21}^2dv^2=0
\end{equation*}
and the result follows.
\end{proof}
Now, the principal result of this section can be stated as follows.
\begin{theorem}
Let $\alpha\colon \mathbb{R}^2\rightarrow\mathbb{R}^3$ be a local parametrization of the surface $S$ in a small neighborhood of a cusp of Gauss point. Then the configuration of the affine asymptotic lines is locally topologically equivalent to 
\begin{equation}\label{eq_th_cg}
\Bigl(v-\frac{5}{4}\,u^2+O(3)\Bigr)du^2 + \bigl(1+O(3)\bigr)dv^2=0.
\end{equation}
In the source, the Euclidean and affine parabolic sets have contact of order two at the origin and the local behavior of the affine asymptotic net together with the parabolic set is shown in Figure~\ref{fig8}.
\begin{figure}[htb!]
  \centering
  \includegraphics[width=.5\textwidth,clip]{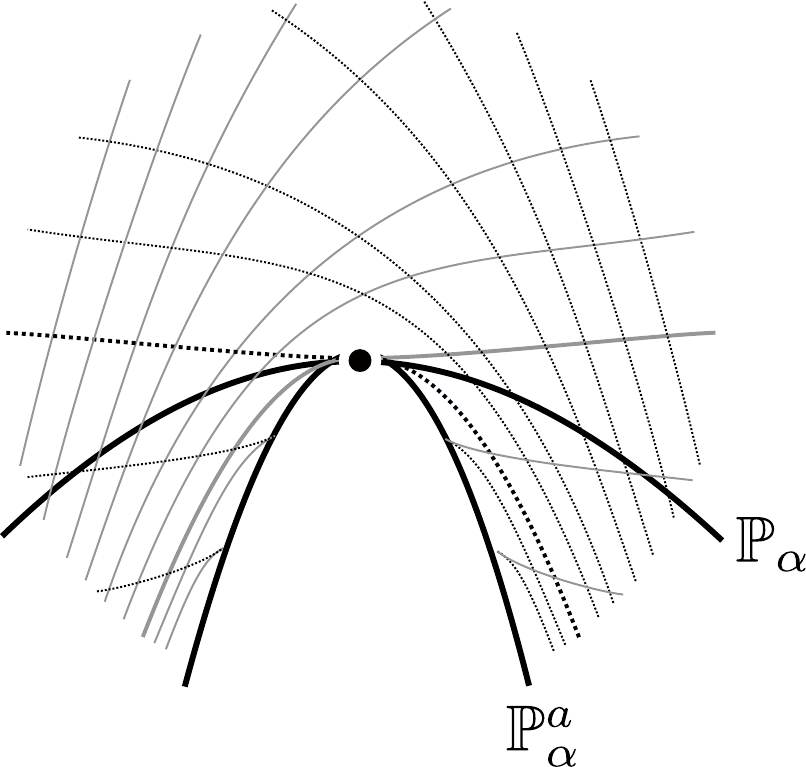}
  \caption{\small Affine asymptotic lines near a cusp of Gauss point.}
  \label{fig8}
\end{figure}
\end{theorem}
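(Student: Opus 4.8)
The plan is to follow the same strategy used in Proposition~\ref{eqaaleh} and Theorem~\ref{theo_laa_nop}, namely to reduce the binary differential equation to a Davydov-type normal form and then read off the qualitative behavior from the Lie--Cartan field. First I would substitute the parametrization~\eqref{eq_para_parb_cg} into the coefficients $A$, $B$, $C$ of the extended equation~\eqref{eqlaaps}, expand each as a power series in $(u,v)$, and record the low-order jets. Since $\alpha(0,0)$ is a cusp of Gauss point, the previous proposition already gives $A(0,0)=B(0,0)=0$ and $C(0,0)=-48q_{21}^2\neq 0$ (using $q_{21}^2-4q_{40}\neq 0$, so in particular $q_{21}\neq 0$), which guarantees that after dividing through by $C$ the $dv^2$-coefficient is a nonvanishing function that can be normalized to $1+O(3)$. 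The main computational task is then to extract the 2-jet of $A/C$ and of $B/C$ and to verify that $B/C$ can be absorbed, after a suitable change of coordinates, so that the equation takes the stated form~\eqref{eq_th_cg}.

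The key reduction step is the choice of a germ of a diffeomorphism $\psi\colon(x,y)\mapsto(u(x,y),v(x,y))$, exactly as in the proof of Proposition~\ref{eqaaleh}, with nonvanishing Jacobian at the origin and enough free Taylor coefficients to kill the mixed $du\,dv$ term and to straighten the discriminant. I would first arrange the linear part of $\psi$ so that the tangent line of the (affine and Euclidean) parabolic set at the origin becomes the $x$-axis; Proposition~\ref{prop_davis_parsets}~\itm{iii} tells us that at a cusp of Gauss the Euclidean and affine parabolic sets are tangent, so both discriminants have the same tangent direction and a single linear change aligns them. Then I would use the higher-order coefficients of $\psi$ to remove the $dudv$ term and to force the leading quadratic part of the $du^2$-coefficient into the precise shape $v-\tfrac{5}{4}u^2$. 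The universal constant $-\tfrac{5}{4}$ should emerge as a ratio of the leading coefficients of $A$, $B$, $C$; computing this ratio explicitly and checking that it is independent of the free parameters $q_{ij}$ is the crux of the statement.

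Once the equation is in the form~\eqref{eq_th_cg}, the topological conclusion follows the template of Theorem~\ref{theo_laa_nop}: setting $p=dv/du$ and $F(u,v,p)=\bigl(v-\tfrac{5}{4}u^2+O(3)\bigr)+\bigl(1+O(3)\bigr)p^2$, I would form the Lie--Cartan field $X$ of~\eqref{eqLieCartanField}, locate its singularity over the origin, linearize, and compare the resulting phase portrait with the Davydov models. Because here the sign in front of $v$ is opposite to that in~\eqref{eq_edb_dav} (the $du^2$-coefficient vanishes on $v=\tfrac54u^2+\cdots$ from the other side), the projected configuration is the one drawn in Figure~\ref{fig8}, and the statement about the order-two contact between the Euclidean and affine parabolic sets is recorded directly from the 2-jets of their defining functions computed in the first step.

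I expect the main obstacle to be the bookkeeping in the first and second steps: the coefficients $A$, $B$, $C$ are degree-two-homogeneous expressions in the third and fourth derivatives of $h$, so their series expansions involve many $q_{ij}$, and one must carry these through the nonlinear change of coordinates $\psi$ to confirm that the leading quadratic coefficient collapses to exactly $-\tfrac54$ and that all dependence on the remaining free parameters can be eliminated by admissible coordinate choices. The conceptual content is routine given the machinery already established in Section~\ref{sec:section4}; the difficulty is purely in verifying that the normalizing diffeomorphism exists and produces this specific universal normal form rather than a parameter-dependent one.
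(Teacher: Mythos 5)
Your overall route is the paper's own: substitute \eqref{eq_para_parb_cg} into the extended equation \eqref{eqlaaps}, expand $A$, $B$, $C$ to second order, observe $A(0,0)=B(0,0)=0$ and $C(0,0)=-48q_{21}^2\neq0$, and then normalize with a germ of diffeomorphism $\Psi$ with free Taylor coefficients, exactly as in Proposition~\ref{eqaaleh}; the contact statement is likewise obtained from the $2$-jets of the two parabolic sets, $v=\frac{q_{21}^2-6q_{40}}{q_{21}}u^2+\cdots$ (Euclidean) and $v=\frac{2(4q_{21}^2-17q_{40})}{q_{21}}u^2+\cdots$ (affine), whose quadratic coefficients differ precisely because $q_{21}^2-4q_{40}\neq0$. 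Your Lie--Cartan step at the end is correct but redundant: after the substitution $v\mapsto-v$, equation \eqref{eq_th_cg} becomes \eqref{eq_edb_dav} with $\lambda=-\frac{5}{4}<0$, so Theorem~\ref{theo_laa_nop} already identifies the configuration as a folded saddle.

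The one genuine gap is that you never verify how the affine asymptotic net sits relative to the \emph{Euclidean} parabolic curve, which is part of what the statement and Figure~\ref{fig8} assert. The paper closes the proof by parametrizing the parabolic set as $(u,\eta(u))$, with $\eta(u)=\frac{q_{21}^2-6q_{40}}{q_{21}}u^2+\text{h.o.t.}$, and evaluating the discriminant of \eqref{eqlaaps} along it, obtaining $(B^2-AC)(u,\eta(u))=16128\,q_{21}^2\left(q_{21}^2-4q_{40}\right)^2u^2+\text{h.o.t.}$, which is positive for small $u\neq0$ independently of the signs of the $q_{ij}$. This shows that, away from the origin, the Euclidean parabolic curve lies in the region where the BDE has two real roots, i.e.\ on the side of the affine parabolic set where the net is defined, so the net genuinely crosses the parabolic curve as drawn. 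Your contact computation produces the $2$-jets of both curves, but by itself it does not determine on which side of the affine parabolic set (equivalently, in which sign-region of the discriminant) the Euclidean curve lies; some version of this sign check is required to finish the proof of the last clause of the theorem.
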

\begin{proof}
Evaluating~\eqref{eq_para_parb_cg} in the BDE~\eqref{eqlaaps} we obtain
\begin{align*}
A(u,v) & = -192\,q_{21}(q_{21}^2-4q_{40})v - 192\,(q_{21}^4-18q_{21}^2q_{40}+60q_{40}^2)u^2 \\
 &\qquad -384\,(2q_{21}^2q_{31}-10q_{21}q_{50}+15q_{31}q_{40})uv-16\,(48q_{03}q_{21}^3  \\
 &\qquad -288\,q_{03}q_{21}q_{40}+52q_{21}^2q_{22}-48q_{21}q_{41}-48q_{22}q_{40}+63q_{31}^2) v^2 \\
 &\qquad + O(3),
\end{align*}
\begin{align*}
  B(u,v) &= 96q_{21}(3q_{21}^2-14q_{40})u-144q_{31}q_{21}v+48(41q_{21}^2q_{31}-70q_{21}q_{50} \\
 &\qquad- 60\,q_{31}q_{40})u^2+16(84q_{03}q_{21}^3-468q_{03}q_{21}q_{40}+62q_{21}^2q_{22}-36q_{21}q_{41} \\
 &\qquad + 168\,q_{22}q_{40}-27q_{31}^2)uv-48(15q_{03}q_{21}q_{31}+2q_{13}q_{21}^2-q_{21}q_{32} \\
 &\qquad + 10q_{22}q_{31})v^2 + O(3),
\end{align*}
\begin{align*}
 C(u,v) &= -48\,q_{21}^2-288q_{21}q_{31}u-64q_{21}(6q_{03}q_{21}+q_{22})v-16(48q_{03}q_{21}^3 \\
 &\qquad - 108q_{03}q_{21}q_{40}-34q_{21}^2q_{22}+36q_{21}q_{41}+48q_{22}q_{40}+27q_{31}^2)u^2 \\
 &\qquad - 192\,(9q_{03}q_{21}q_{31}-2q_{13}q_{21}^2+12q_{13}q_{40}+q_{21}q_{32}+3q_{22}q_{31})uv \\
 &\qquad - 32\,(54q_{03}^2q_{21}^2+21q_{03}q_{21}q_{22}+6q_{04}q_{21}^2+18q_{13}q_{31}-3q_{21}q_{23} \\
 &\qquad + 2q_{22}^2)v^2 + O(3).
\end{align*}
As before, let us consider the change of coordinates $\Psi\colon\mathbb{R}^2\rightarrow\mathbb{R}^2$ given by the map $\Psi(x,y)=\left(u(x,y),v(x,y)\right)$, where
\begin{align*}
u(x,y)&=s_{10}x+s_{01}y+s_{20}x^2+s_{11}xy+s_{02}y^2+s_{30}x^3+s_{21}x^2y+s_{12}xy^2+s_{03}y^3+\ldots, \\
v(x,y)&=t_{01}y+t_{20}x^2+t_{11}xy+t_{02}y^2+t_{30}x^3+t_{21}x^2y+t_{12}xy^2+t_{03}y^3+\ldots,
\end{align*}
and $s_{10}t_{01}\neq0$. By a procedure analogous to that used in the proof of Proposition~\ref{eqaaleh} and making appropriate choices in the coefficient of $\Psi$, we obtain the desired form for the BDE of the affine asymptotic lines. In the source the Euclidean and affine parabolic sets are given (ommiting higher order terms h.o.t.) respectively by
\begin{equation*}
v=\frac{q_{21}^2-6q_{40}}{q21}u^2+\text{ h.o.t.}\quad\text{and}\quad v=2\frac{4q_{21}^2-17q_{40}}{q_{21}}u^2+\text{ h.o.t.},
\end{equation*}
thus they have contact of order two at the origin. For the final part, let $(u,\eta(u))$, with $\eta(u)=\mfrac{q_{21}^2-6q_{40}}{q21}u^2+\text{\,h.o.t.}$, be a local parametrization of the parabolic set. Evaluating in the discriminant function of the BDE~\eqref{eqlaaps} we obtain
\begin{equation*}
\left(B^2-AC\right)\left(u,\eta(u)\right)=16128q_{21}^2\left(q_{21}^2-4q_{40}\right)^2u^2+ \text{ h.o.t.}
\end{equation*}
Near to the cusp of Gauss point and along the parabolic set the discriminant function satisfies $B^2-AC>0$, and this concludes the proof. 
\end{proof}
\subsection{Affine asymptotic lines near to a flat umbilic point}
Finally we will consider a small neighborhood of a flat umbilic point. It is worth to point out that there are no flat umbilic points for general surfaces. In other words, flat umbilic points disappear under small perturbations, and thus, being a flat umbilic point is not a generic condition.

It is shown in~\cite{Bruce1995} that the Euclidean asymptotic net near to a flat umbilic point has one of the local topological models appearing in Figure~\ref{fig6}. For the affine case the analogous result is given by Proposition~\ref{prop_aal_morse}.

It is well known that a local chart for a neighborhood of a flat umbilic point is given by $\alpha(u,v)=\left(u,v,h(u,v)\right)$, where
\begin{align}\label{eq_class_up}
h(u,v) &= q_{30}u^3+q_{21}u^2v+q_{12}uv^2+q_{03}v^3+q_{40}u^4+q_{31}u^3v+q_{22}u^2v^2 \\
 &\qquad + q_{13}uv^3+q_{04}v^4+ q_{50}u^5+q_{41}u^4v+q_{32}u^3v^2+q_{23}u^2v^3 \notag \\ 
 &\qquad + q_{14}uv^4 + q_{05}v^5+O(6). \notag
\end{align}
For simplicity, in calculations we will use the following local parametrization 
\begin{proposition}[Special chart]\label{prop_for-norm-fup}
Let $S$ a smooth surface locally parametrized by $\alpha\left(u,v\right)=\left(u,v,h\left(u,v\right)\right)$ and let $p=X(0,0)$. Assume that $p$ is a flat umbilic point, then $h\left(u,v\right)$ can be written as
\begin{align}\label{for-norm-fup}
h(u,v) &= u^3+3\varepsilon uv^2+q_{40}u^4+q_{31}u^3v+q_{22}u^2v^2+q_{13}uv^3+q_{04}v^4  \\
 &\qquad + q_{50}u^5+q_{41}u^4v+q_{32}u^3v^2+q_{23}u^2v^3+q_{14}uv^4+q_{05}v^5+O(6), \notag
\end{align}
where $\varepsilon=\pm1$.
\end{proposition}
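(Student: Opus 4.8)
The plan is to produce the claimed normal form \eqref{for-norm-fup} from the general flat-umbilic chart \eqref{eq_class_up} by exhibiting an explicit change of coordinates in the source together with an affine transformation in the target, exactly in the spirit of the Pick normal forms of Proposition~\ref{prop_pick}. At a flat umbilic point the Hessian of $h$ vanishes, so the $2$-jet of $h$ is zero and the cubic part $q_{30}u^3+q_{21}u^2v+q_{12}uv^2+q_{03}v^3$ is the leading term. The first step is to normalize this binary cubic: over $\R$ a generic binary cubic form can be brought, by an invertible linear change of the $(u,v)$ variables, to one of the standard shapes, and I would argue that in the generic (non-degenerate) case it can be reduced to a multiple of $u^3+3\varepsilon uv^2$ with $\varepsilon=\pm1$. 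The sign $\varepsilon$ records the projective type of the cubic (three distinct real roots versus one real root), which is the genuine invariant that cannot be removed; everything else in the cubic is cleared by the linear change.

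Concretely, I would first apply a linear map in the source to put the cubic into the form $c(u^3+3\varepsilon uv^2)$, and then use a target scaling (an affine transformation of $\R^3$ acting on the height coordinate, composed with a compensating rescaling of $u,v$) to normalize the leading coefficient $c$ to $1$, so that the cubic part becomes exactly $u^3+3\varepsilon uv^2$. Because affine transformations of the target are permitted, the constant in front of the cubic is free and can be set to $1$; this is what lets us write the clean leading term in \eqref{for-norm-fup}. After the cubic is fixed, the quartic and quintic terms transform into new quartic and quintic terms with relabeled coefficients $q_{40},q_{31},\dots,q_{05}$, and since no further normalization is imposed on them they simply carry the generic names shown in the statement, with the $O(6)$ tail absorbing everything of higher order.

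The main obstacle is the reduction of the binary cubic form to $u^3+3\varepsilon uv^2$. One must verify that the orbit structure of real binary cubics under $GL(2,\R)$ (acting on the variables) together with scaling of the output indeed has exactly two non-degenerate normal forms distinguished by a sign, and that the target form $u^3+3\varepsilon uv^2$ is the representative used elsewhere in the paper (it matches the cubic appearing in the hyperbolic Pick form of Proposition~\ref{prop_pick}\,\itm{ii} with $\sigma=1$, up to the choice of sign $\varepsilon$). One should also confirm that the change of coordinates effecting this reduction is a genuine germ of a diffeomorphism at the origin (its linear part is invertible) so that it is legitimate as a coordinate change, and that combining it with the permitted affine target transformation keeps us within the allowed group $\mathrm{ASL}(\R^3)$ up to the overall normalizing scalars. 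The discriminant condition separating the two cases (analogous to the condition $q_{21}^2-4q_{40}\neq0$ used for cusps of Gauss in \eqref{eq_para_parb_cg}) guarantees the cubic is non-degenerate so that one of the two signs $\varepsilon=\pm1$ genuinely occurs.

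Once the cubic is in standard form, the remaining content is bookkeeping: collect the transformed quartic and quintic monomials, rename their coefficients as $q_{ij}$, and observe that the resulting expression is precisely \eqref{for-norm-fup}. I therefore expect the proof to consist of the linear normalization of the cubic (the only conceptual step), followed by a routine verification that the higher-order terms retain the generic polynomial form, which can be stated as a direct computation without displaying the lengthy coefficient formulas.
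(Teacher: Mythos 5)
Your proposal takes essentially the same route as the paper: the paper also normalizes the cubic part by a linear change of coordinates in the source combined with a compensating unimodular affine map in the target---explicitly choosing the shear coefficients to kill the $u^2v$ and $v^3$ terms (the latter by taking a real root of a cubic equation) and then rescaling so the surviving coefficients become $1$ and $\varepsilon=\pm1$---after which the quartic and quintic coefficients are simply relabeled. The only difference is that you invoke the $GL(2,\R)$ classification of non-degenerate real binary cubics where the paper carries out that reduction by hand, which is the same argument (and your remark that a non-degeneracy condition on the cubic is implicitly needed is accurate, since the paper's explicit formulas require nonvanishing denominators).
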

\begin{proof}
Consider the change of coordinates $\Theta_1\colon (x,y)\mapsto(u(x,y),v(x,y))$ given by 
\begin{equation*}
u(x,y)=x+a_{01}y\quad\text{and}\quad v(x,y)=y+b_{10}x,\quad\text{with}\quad 1-a_{01}b_{10}\neq0.
\end{equation*}
Applying it to $\alpha$ with $h$ as in~\eqref{eq_class_up} we obtain
\begin{equation*}
\alpha\circ\Theta_1(x,y)=\left(x+a_{01}y,y+b_{10}x,h\circ\Theta(x,y)\right).
\end{equation*}
Taking now the linear transformation
\begin{equation*}
T_1\colon (U,V,W)\longmapsto\left(r^{-1}\left(U-a_{01}V\right),r^{-1}\left(V-b_{10}U\right),r\,W\right)
\end{equation*}
where $r=1-a_{01}b_{10}$, we obtain 
\begin{equation*}
T_1\circ\alpha\circ\Theta_1(x,y)=\left(x,y,\Sigma(x,y)\right),
\end{equation*}
where
\begin{equation*}
\Sigma(x,y)=Q_{30}x^3+Q_{21}x^2y+Q_{12}xy^2+Q_{03}y^3+O(4).
\end{equation*}
Note that the matrix associated with $T_1$ has determinant equal to one. Let $\alpha_1(x,y)=T_1\circ\alpha\circ\Theta_1(x,y)$. It is clear that the $Q_{ij}$'s depend on the coefficients of the 3-jet of~\eqref{eq_class_up} and the pair $a_{01},b_{10}$. Taking
\begin{equation*}
a_{01}=-\frac{3b_{10}^2q_{03}+2b_{10}q_{12}+q_{21}}{b_{10}^2q_{12}+2b_{10}q_{21}+3q_{30}}
\end{equation*}
we have that $Q_{21}$ vanishes. $Q_{03}$ disappears if $b_{01}$ is a real root of the equation
\begin{equation*}
R_3b_{01}^3+3R_2b_{01}^2+3R_1b_{01}+R_0=0,
\end{equation*}
where
%
\begin{itemize}
	\item $R_3=-27q_{03}^2q_{30}+9q_{03}q_{12}q_{21}-2q_{12}^3$,
	\item $R_2=3\left(-9q_{03}q_{12}q_{30}+6q_{03}q_{21}^2-q_{12}^2q_{21}\right)$,
	\item $R_1=3\left(9q_{03}q_{21}q_{30}-6q_{12}^2q_{30}+q_{12}q_{21}^2\right)$,
	\item $R_0=-9q_{12}q_{21}q_{30}+2q_{21}^3$.
\end{itemize}
%
It follows that $\Sigma(x,y)=Q_{30}x^3+Q_{12}xy^2+O(4)$. Consider now the change of coordinates $\Theta_2\colon (u,v)\mapsto(x(u,v),y(u,v))$ given by 
\begin{equation*}
x(u,v)=a\,u\quad\text{and}\quad y(u,v)=b\,v,\quad\text{with}\quad a,b\neq0,
\end{equation*}
and the linear transformation
\begin{equation*}
T_2\colon (U,V,W)\longmapsto\left(\frac{1}{a}\,U,\frac{1}{b}\,V,a\,b\,W\right).
\end{equation*}
It follows that taking $b=\mfrac{1}{a^4q_{30}}$ and after $a=\sqrt[^{10}]{\Bigl|\mfrac{q_{12}}{q_{30}^3}\Bigr|}$ we obtain
\begin{equation*}
T_2\circ\alpha\circ\Theta_2(u,v)=\left(u,v,u^3+\varepsilon uv^2+O(4)\right),
\end{equation*}
where $\varepsilon=\pm1$ just as we wanted to show.
\end{proof}
\begin{lemma}\label{lemma_epsilonig1}
Let S a smooth surface $S$ locally parametrized by $\alpha(u,v)=(u,v,h(u,v))$ as in the Proposition~\ref{prop_for-norm-fup}. Then if $\varepsilon=1$, in a small neighborhood of $p=\alpha(0,0)$ do not exist affine asymptotic lines. 
\end{lemma}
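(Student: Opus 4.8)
The plan is to show that when $\varepsilon=1$ the binary differential equation~\eqref{eqlaaps} has a negative discriminant $B^2-AC$ throughout a punctured neighborhood of the origin, which by Remark~\ref{rem_regions} (together with Lemma~\ref{lem_discrim_parab}) precludes the existence of real affine asymptotic directions and hence of affine asymptotic lines. Concretely, the existence of two, one, or no affine asymptotic directions at a point is governed by the sign of $m^2-ln$; since the numerators $A,B,C$ of $l,m,n$ differ from $l,m,n$ only by the common nonvanishing factor $(h_{uu}h_{vv}-h_{uv}^2)^2$ away from the parabolic set, the sign of $B^2-AC$ agrees with that of $m^2-ln$ off the Euclidean parabolic set. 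So it suffices to prove $B^2-AC<0$ near the origin except possibly on the parabolic set itself.

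The first concrete step is to substitute the special chart~\eqref{for-norm-fup} with $\varepsilon=1$, namely $h(u,v)=u^3+3uv^2+\text{(higher order)}$, into the formulas for $A,B,C$ (the numerators of the expressions for $l,m,n$ displayed at the start of Section~\ref{sec:section5}). Here $h_{uu}=6u+O(2)$, $h_{uv}=6v+O(2)$, $h_{vv}=6u+O(2)$, so the leading parabolic factor is $h_{uu}h_{vv}-h_{uv}^2=36(u^2-v^2)+O(3)$, which already signals that the Euclidean parabolic set near the origin consists of the two lines $u=\pm v$ to leading order. I would then compute the leading homogeneous parts of $A$, $B$, $C$ as polynomials in $(u,v)$; since the third-order jet of $h$ is exactly $u^3+3uv^2$, these leading parts are determined by that cubic alone and should be explicit low-degree forms in $u,v$.

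The decisive step is to evaluate the leading part of $B^2-AC$ and verify it is a negative-definite homogeneous form (of the appropriate even degree) in $(u,v)$, i.e. strictly negative for $(u,v)\neq(0,0)$. If the lowest-order term of $B^2-AC$ is, say, a negative multiple of $(u^2+v^2)^k$ or more generally a negative-definite even form, then $B^2-AC<0$ on a punctured neighborhood and the conclusion follows immediately: no point near $p$ other than possibly $p$ itself carries a real affine asymptotic direction, so no affine asymptotic line can pass through the neighborhood. I expect the main obstacle to be precisely the sign/definiteness verification: the expressions for $A,B,C$ are long, and after forming $B^2-AC$ one must check that the leading form has no real zeros off the origin, which may require completing squares or computing a discriminant of the resulting binary form rather than a one-line inspection.

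A subtlety to dispatch carefully is the behavior exactly on the Euclidean parabolic curves $u=\pm v+\cdots$, where the common factor $(h_{uu}h_{vv}-h_{uv}^2)^2$ vanishes and $l,m,n$ are only defined via the extended homogeneous equation~\eqref{eqlaaps}. On these curves one uses the numerators $A,B,C$ directly; if the leading form of $B^2-AC$ is negative-definite it is in particular negative along $u=\pm v$, so the argument needs no separate treatment there and the non-existence of affine asymptotic lines holds in a full punctured neighborhood of $p$.
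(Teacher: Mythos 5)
Your overall strategy is the same as the paper's: substitute the special chart of Proposition~\ref{prop_for-norm-fup} into the extended BDE~\eqref{eqlaaps} and determine the sign of the discriminant $\delta=B^2-AC$ near the origin. The gap lies exactly in your decisive step. You plan to verify that the lowest-order part of $B^2-AC$ is a \emph{negative-definite} even form (``a negative multiple of $(u^2+v^2)^k$''), and you correctly identify that the whole argument hinges on this form having ``no real zeros off the origin.'' That verification fails: the actual computation (the paper's equation~\eqref{eq_dis_epsilon}) gives $\delta(u,v)=-589824\,\varepsilon(\varepsilon v^2-3u^2)^2+O(5)$, so for $\varepsilon=1$ the leading form is $-589824\,(v^2-3u^2)^2$, a negative multiple of a perfect square. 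This is only negative \emph{semi}-definite: it vanishes identically along the two lines $v=\pm\sqrt{3}\,u$. Note also that these lines are not the parabolic lines $u=\pm v$ (for $\varepsilon=1$ one has $h_{uu}h_{vv}-h_{uv}^2=36(u^2-v^2)+O(3)$), so your final paragraph, which dispatches the locus where special care is needed by identifying it with the Euclidean parabolic set, addresses the wrong locus. Consequently your claimed conclusion, that $B^2-AC<0$ throughout a punctured neighborhood, does not follow and is not even true at leading order: along $v=\pm\sqrt{3}\,u$ the sign of $\delta$ is decided by the $O(5)$ remainder, about which your argument says nothing.

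To close the gap you would need one of two additional ingredients. Either control the higher-order terms of $\delta$ along the lines $v=\pm\sqrt{3}\,u$, or weaken the goal as the paper does: the paper concludes only that $\delta(u,v)\le 0$ near the origin and takes this as finishing the proof, the point being that where $\delta<0$ there is no real affine asymptotic direction at all, while where $\delta=0$ there is at most a double direction, so no net of affine asymptotic lines can exist in the neighborhood (a hypothetical asymptotic line would have to lie entirely inside the zero set of $\delta$ with its tangent equal to the double root at every point). Your proposal, as written, has no fallback once definiteness fails, so the step you yourself flagged as the main obstacle is where it breaks.
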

\begin{proof}
In this chart, the discriminant function $\delta$, associated with the BDE~\eqref{eqlaaps} satisfies
\begin{equation}\label{eq_dis_epsilon}
\delta(u,v) = -589824\,\varepsilon (\varepsilon v^2-3u^2)^2+O(5).
\end{equation}
It follows that near to $p=\alpha(0,0)$, $\delta(u,v)\leq 0$ and this finishes the proof.
\end{proof}
Regarding Lemma~\ref{lemma_epsilonig1}, it remains to analyze the case when $\varepsilon=-1$ in the parametrization given in Proposition~\ref{prop_for-norm-fup}. 
\begin{theorem}\label{theoumbplaff}
Under the conditions of Lemma~\ref{lemma_epsilonig1} take $\varepsilon=-1$. Then the discriminant set $\delta^{-1}(0)$ of the BDE~\eqref{eqlaaps} is locally an isolated point $(0,0)$ and the local behavior of the affine asymptotic curves near the origin is as illustrated in Figure~\ref{fig9}.
\begin{figure}[htb!]
	\centering
	\includegraphics[width=.4\textwidth,clip]{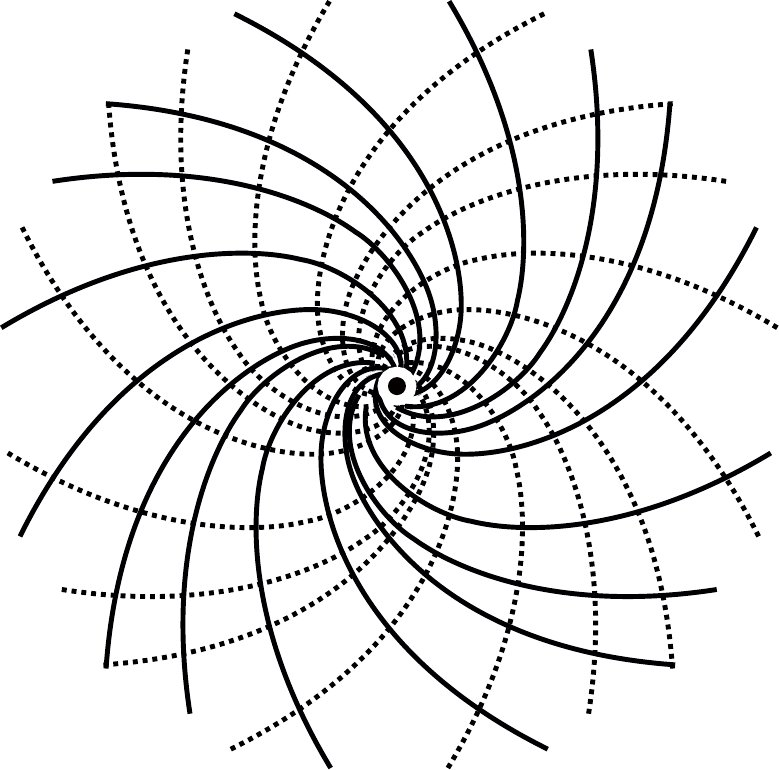}
	\caption{\small Affine asymptotic lines near a flat umbilic point.}
	\label{fig9}
\end{figure}
\end{theorem}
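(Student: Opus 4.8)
The plan is to run the analysis on the extended equation~\eqref{eqlaaps} written in the special chart of Proposition~\ref{prop_for-norm-fup} with $\varepsilon=-1$, and the first point to settle is the discriminant. Evaluating~\eqref{eq_dis_epsilon} at $\varepsilon=-1$ gives $\delta(u,v)=589824\,(v^2+3u^2)^2+O(5)$, whose $4$-jet is positive definite; hence $\delta>0$ on a punctured neighbourhood of the origin and $\delta^{-1}(0)=\{(0,0)\}$ locally, which is the first assertion. It also tells us that on the punctured disk there are exactly two real, transversal affine asymptotic directions, so the asymptotic net consists of a pair of foliations with a single isolated singularity at $p$.

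The difficulty is that at a flat umbilic point one has $l(p)=m(p)=n(p)=0$, so the coefficients $A,B,C$ of~\eqref{eqlaaps} vanish to order two at the origin; consequently the whole fibre $\{(0,0,p)\}$ lies in $M=\{F=0\}$ and \emph{every} point of it is a zero of the Lie--Cartan field~\eqref{eqLieCartanField}. The way around this total degeneracy is to exploit the special form of the $2$-jets. A direct computation of the numerators of $l,m,n$ from $h$ in~\eqref{for-norm-fup} gives, up to a common nonzero scalar,
\[
 A=9u^2-4v^2+O(3),\qquad B=7uv+O(3),\qquad C=v^2-4u^2+O(3),
\]
and, crucially, $B^2-AC=4(3u^2+v^2)^2+O(5)$ is a perfect square at leading order. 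This makes the quadratic leading part of~\eqref{eqlaaps} factor over the reals,
\[
 A_2\,du^2+2B_2\,du\,dv+C_2\,dv^2=\bigl[(3u-2v)\,du+(2u+v)\,dv\bigr]\bigl[(3u+2v)\,du-(2u-v)\,dv\bigr],
\]
so that, to leading order, the two asymptotic foliations are the orbit foliations of the linear vector fields $V_{+}=(2u+v)\,\partial_u+(2v-3u)\,\partial_v$ and $V_{-}=(2u-v)\,\partial_u+(3u+2v)\,\partial_v$.

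Next I would read off the local type of each foliation from its linear part. Both matrices $\left(\begin{smallmatrix}2&1\\-3&2\end{smallmatrix}\right)$ and $\left(\begin{smallmatrix}2&-1\\3&2\end{smallmatrix}\right)$ have trace $4$ and determinant $7$, hence the common pair of eigenvalues $2\pm i\sqrt{3}$, complex with nonzero real part. Therefore each of the two asymptotic foliations has a focus at the origin, and their superposition is precisely the \emph{focus}~$+$~\emph{focus} picture of Figure~\ref{fig9}.

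The remaining, and genuinely delicate, step is to promote this $2$-jet description to a statement about the full germ, i.e.\ to show the $O(3)$ tails do not change the topology. Here I would desingularise by a single blow-up of the origin: since the linear parts $V_{\pm}$ have no real eigendirections (their eigenvalues are non-real), the lifted foliations have no zeros on the exceptional circle, so the blow-up introduces no new singular data and the return around the circle is a rotation for each foliation; equivalently, a focus of a planar line field is structurally stable, so the higher-order perturbation preserves the spiralling of each foliation. This is the kind of conclusion already obtained for degenerate BDEs with non-vanishing discriminant in the references cited (Davydov, Bruce--Tari), and assembling the two robust foci yields the asserted model. I expect this passage from the $2$-jet to the germ to be the main obstacle, precisely because the usual Lie--Cartan field degenerates along the whole fibre and one must argue through the blow-up (or the structural stability of each focus) rather than through an isolated singularity of $X$.
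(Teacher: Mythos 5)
Your proposal is correct, and its skeleton coincides with the paper's: the same evaluation of~\eqref{eq_dis_epsilon} at $\varepsilon=-1$ gives the isolated discriminant point, and the same device --- a polar blow-up at the origin --- is what replaces the Lie--Cartan field, which (as you rightly stress) degenerates along the whole fiber. Where you genuinely differ is in how the leading order is organized. The paper never factors the $2$-jet: it substitutes $u=r\cos t$, $v=r\sin t$ directly into~\eqref{eqlaaps}, divides by $r^2$, observes $\widebar{A}(0,t)=(1+2\cos^2 t)^2\neq 0$, and finishes by checking that the radial and angular components of the two lifted fields $Y_1,Y_2$ have fixed signs near $r=0$, so one foliation spirals inward and the other outward. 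Your route --- factoring the quadratic part as $[(3u-2v)\,du+(2u+v)\,dv]\,[(3u+2v)\,du-(2u-v)\,dv]$ and computing the eigenvalues $2\pm i\sqrt{3}$ of $V_{\pm}$ --- is fully consistent with that computation: your quadratic form evaluated on radial directions is $(3\cos^2 t+\sin^2 t)^2=(1+2\cos^2 t)^2$, which is exactly the paper's $\widebar{A}(0,t)$; your perfect-square discriminant $4(3u^2+v^2)^2$ is~\eqref{eq_dis_epsilon} up to the normalizing constant; and ``no real eigendirections'' is precisely the geometric meaning of $\widebar{A}(0,t)\neq 0$. What your packaging buys is an explicit linear model with spiral rates; what it costs is that persistence under the $O(3)$ tails must be argued separately, and there you are looser than the paper: ``a focus of a planar line field is structurally stable'' cannot be invoked as a black box, because the tails are not a $C^1$-small perturbation of $V_{\pm}$ at the origin, so the standard hyperbolicity theorems do not apply as stated. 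The rigorous version is exactly your parenthetical blow-up remark, which is what the paper actually does: after blowing up, the lifted coefficients are $O(r)$-perturbations of the limiting ones, the exceptional circle remains a common leaf, and along solutions $d(\log r)/dt$ equals a function of $t$ of fixed sign plus $O(r)$, so the monotone winding --- hence the pair of foci after blowing down --- survives. With that one step written out as a sign check in the blown-up coordinates rather than cited as structural stability, your proof is complete and matches the paper's conclusion.
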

\begin{proof}
The first part is a direct consequence of taking $\varepsilon=-1$ in~\eqref{eq_dis_epsilon}. For the second part consider the polar blowing-up for the BDE~\eqref{eqlaaps} given by
\begin{equation*}
\varphi(r,t)=\left(r\,\cos(t),r\,\sin(t)\right),\quad\text{$r\geq0$ and $0<t<2\,\pi$}.
\end{equation*}
The new BDE in the variables $r,t$, after dividing by $r^2$, is given by
\begin{equation}\label{edb_blow_polar}
\widebar{A}(r,t)\,dr^2+2r\widebar{B}(r,t)\,drdt+r^2\widebar{C}(r,t)\,dr^2,
\end{equation}
where
\begin{align*}
	\widebar{A}(r,t) & = (1+2\cos^2(t))^2+\Lambda_{1}(r,t),\\
	\widebar{B}(r,t) & = -2\sin(t)\cos(t)(1+2\cos^2(t))+\Lambda_{2}(r,t),\\ 
	\widebar{C}(r,t) & = -4\left(\cos^4(t)+\sin^2(t)\right)+\Lambda_{3}(r,t).
\end{align*}
with $\Lambda_{i}(0,t)=0$ for $i=1,2,3$. Note that $\widebar{A}(0,t)=(1+2\cos^2(t))^2\neq 0$, and then the BDE~\eqref{edb_blow_polar} has not singularities. Consider now the vector fields $Y_i$ defined by kernel of the differential forms 
\begin{equation}
\eta_i(r,t)=r\Bigl(-\widebar{B}(r,t)+(-1)^i\sqrt{\smash[b]{(\widebar{B}^2-\widebar{A}\widebar{C})(r,t)}}\Bigr)\,dr + \widebar{A}(r,t)\,dt,\quad\ i=1,2.
\end{equation}
Near $r=0$, the vector fields $Y_i$ span the line fields associated with the BDE~\eqref{edb_blow_polar} and they do not have singularities either. Considering $Y_i=(\dot{r},\dot{t})_i=\bigl(f^i_1(r,t),f^i_2(r,t)\bigr)$ we can verify that near $r=0$ the relations $f^1_1(r,t)<0$ and $f^1_2(r,t)>0$ hold for $Y_1$, while $f^2_1(r,t)>0$ and $f^2_2(r,t)>0$ hold for $Y_2$. Their phase portraits are shown in Figure~\ref{fig10}.
\begin{figure}[h!]
	\centering
	\includegraphics[width=.6\textwidth,clip]{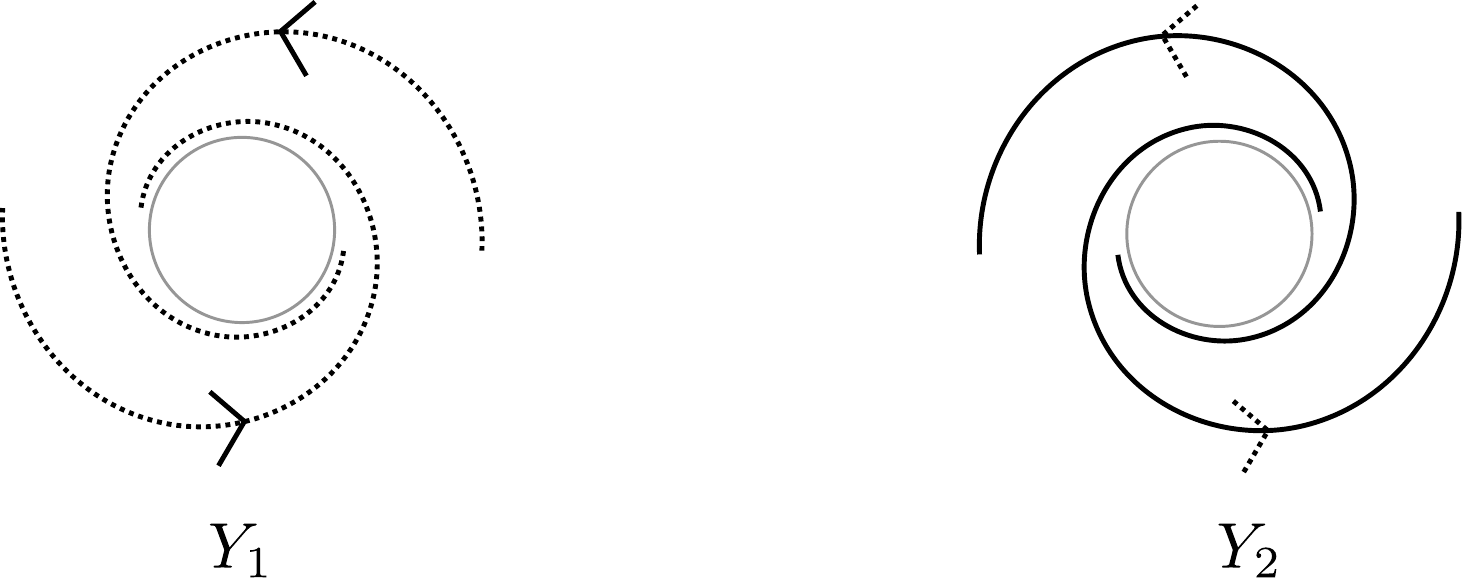}
	\caption{\small Local phase portraits of the vector fields $Y_1$ and $Y_2$, left and right respectively.}
	\label{fig10}
\end{figure}

The blowing-down $\varphi^*$ of the integral curves of the BDE~\eqref{edb_blow_polar} appears in Figure~\ref{fig11}.
\begin{figure}[h!]
	\centering
	\includegraphics[width=.7\textwidth,clip]{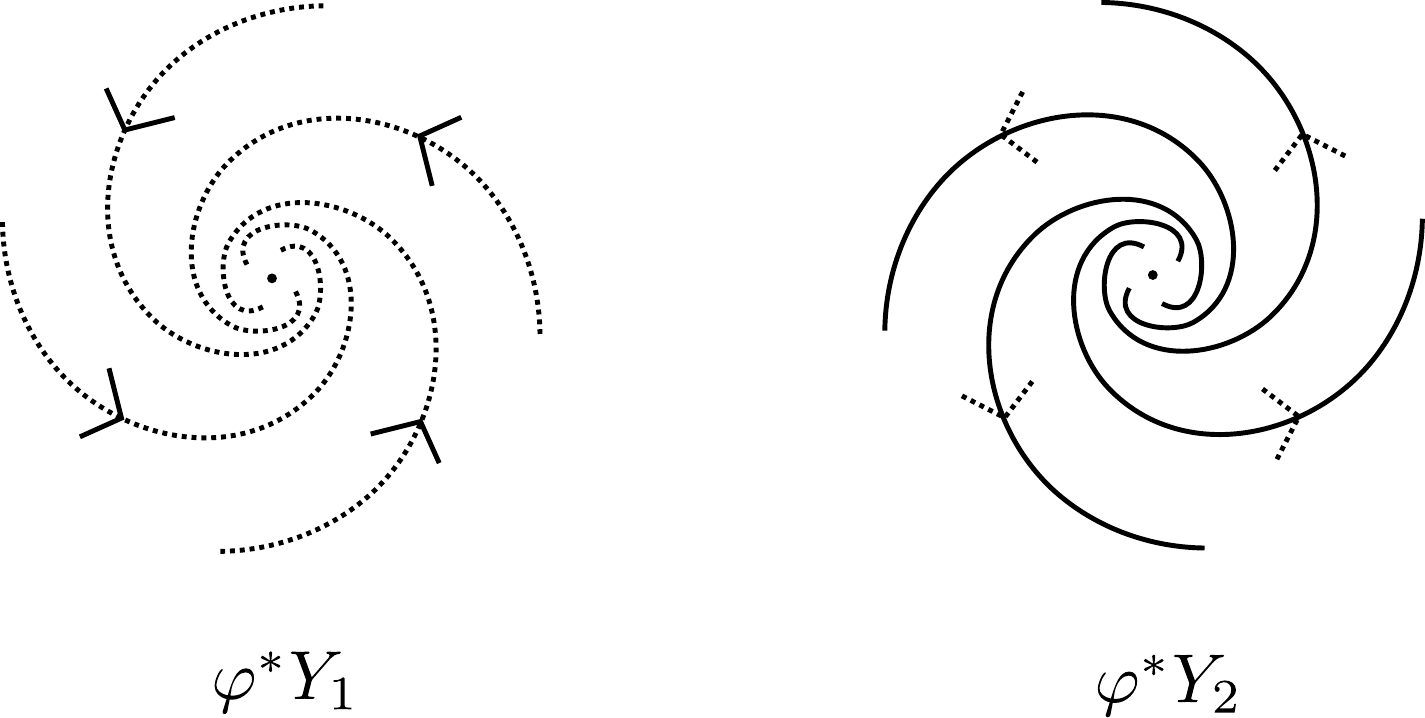}
	\caption{\small Configurations of the integral curves via blowing-down of the solutions of the BDE \eqref{edb_blow_polar}.}
	\label{fig11}
\end{figure}

Thus, the vector fields $Y_i$, $i=1,2,$ have a topological focus at the origin that is stable, and unstable, respectively, and this completes the proof.
\end{proof}
\begin{remark}
In~\cite{Oliveira2000} the authors studied pairs of germs of differential 1-forms $(\alpha,\beta)$ in the plane where $\alpha,\beta$ are either regular or have singularity of type saddle, node or focus. Equivalently they considered the pairs of integral curves of direction fields in the plane defined by the kernels of the 1-forms. Fixing the 2-jet of the BDE~\eqref{eqlaaps} in the chart given in the Proposition~\ref{prop_for-norm-fup}, the Theorem~\ref{theoumbplaff} can be proved using the tools developed there. Curiously the picture in Figure~\ref{fig9} does not appear in that paper.
\end{remark}
%
%
\section{Examples of affine asymptotic lines}\label{sec:section6}

In this section it will be given an explicit example, showing the global behavior of affine asymptotic lines in the torus of revolution and its co-normal surface.
\begin{proposition}\label{prop:LA_toro}
	Consider the torus of revolution parametrized by   
	
	\[ \alpha(u,v)= \big( (R+r\cos u) \cos v, (R+r\cos u)\sin v, r\sin u\big),\quad\text{with $r<R$}. \]
	The differential equation of the affine asymptotic lines is given by:
	
	\begin{equation} \label{eq:LA_toro}
	\aligned 
	\mathcal{A} &= \big[ (15\cos^2u  -3)R^2+4\cos u\,(9\cos ^2u-2)r R+16\cos^4u \;r^2 \big]\,du^2\\
	&\qquad + \big [4\cos^2u \,(4r\cos^3 u\;  +3R\cos^2u+R)(R+r\cos u)\big]\,dv^2=0
	\endaligned
	\end{equation}
	Moreover the real solutions are defined in the rings $R_1=\{ 0<u_1<u<u_2<\pi \}$ and
	$R_2=\{ \pi<u_3<u<u_4<2\pi \}$ containing the parabolic set which is formed by the circles $u=\pi/2$ and $u=3\pi/2$.
	
\end{proposition}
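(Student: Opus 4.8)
The plan is to compute the affine data of $\alpha$ directly from the definitions and then read off the binary differential equation \eqref{eqlaa}, using the rotational and reflection symmetries of the torus to keep the calculation under control. Write $\rho=R+r\cos u$. First I would record the first-order data: $\alpha_u\wedge\alpha_v=-r\rho\,(\cos u\cos v,\cos u\sin v,\sin u)$, so that $|\alpha_u\wedge\alpha_v|=r\rho>0$ (using $r<R$, which also gives $\rho>0$), together with the determinant forms $L=|\alpha_u,\alpha_v,\alpha_{uu}|=r^2\rho$, $M=|\alpha_u,\alpha_v,\alpha_{uv}|=0$ and $N=|\alpha_u,\alpha_v,\alpha_{vv}|=r\rho^2\cos u$. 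Hence $LN-M^2=r^3\rho^3\cos u$, which vanishes exactly when $\cos u=0$; this already identifies the parabolic set as the two circles $u=\pi/2$ and $u=3\pi/2$, as claimed.

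Next, from \eqref{eq_conorm} and \eqref{eq:na} I would form the co-normal $\nu=|LN-M^2|^{-1/4}(\alpha_u\wedge\alpha_v)$ and the affine normal $\xi=|LN-M^2|^{-1/4}(\nu_u\wedge\nu_v)$, and then evaluate the coefficients $l=\langle\nu_u,\xi_u\rangle$, $m=\langle\nu_u,\xi_v\rangle$, $n=\langle\nu_v,\xi_v\rangle$ of \eqref{AIffcoeff}. Two symmetries cut the work sharply. Rotation $v\mapsto v+\mathrm{const}$ is a Euclidean motion, so $l,m,n$ depend only on $u$; and the reflection $(x,y,z)\mapsto(x,-y,z)$, i.e. $v\mapsto -v$, is an affine map under which $du\,dv$ changes sign while the affine asymptotic equation must be preserved, forcing $m\equiv0$ (consistent with the absence of a $du\,dv$ term in \eqref{eq:LA_toro}). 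Because the BDE \eqref{eqlaa} is homogeneous, the common positive weight $|LN-M^2|^{-k/4}$ may be cleared off the parabolic set; after this normalization $l$ and $n$ become polynomials in $\cos u$, and matching the $du^2$- and $dv^2$-coefficients reduces them to the expressions $\mathcal{A}$ and $\mathcal{C}$ displayed in \eqref{eq:LA_toro}.

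For the last assertion I would use that, since $m=0$, the discriminant of \eqref{eq:LA_toro} equals $-\mathcal{A}\mathcal{C}$, so real asymptotic directions exist precisely where $\mathcal{A}\mathcal{C}\leq0$. Factoring $\mathcal{C}=4\cos^2u\,(4r\cos^3u+3R\cos^2u+R)(R+r\cos u)$, I show each factor is nonnegative on $[-1,1]$: the last is positive because $r<R$, and for the cubic $g(c)=4rc^3+3Rc^2+R$ one has $g(\pm1)>0$, $g(0)=R>0$, while its remaining critical point in $[-1,1]$ (when present) is a local maximum, so $\min_{[-1,1]}g>0$. Thus $\mathcal{C}>0$ away from the parabolic set, and real solutions occur exactly where $\mathcal{A}(\cos u)\leq0$. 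Since $\mathcal{A}(0)=-3R^2<0$, both parabolic circles lie in the interior of the solution region; evaluating $\mathcal{A}$ at $\cos u=\pm1$ and locating the roots of $\mathcal{A}(\cos u)$ that flank $\cos u=0$ shows that $\{\mathcal{A}<0\}$ is a single $u$-interval about $\pi/2$, and, by the $z$-reflection $u\mapsto 2\pi-u$, another about $3\pi/2$, yielding the rings $R_1$ and $R_2$ with the stated endpoints $u_1,\dots,u_4$.

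The \textbf{main obstacle} is the middle computation: assembling $\nu_u,\nu_v,\xi_u,\xi_v$ requires differentiating the fourth-root weight $|LN-M^2|^{-1/4}=(r^3\rho^3\cos u)^{-1/4}$, which produces lengthy expressions, and the delicate point is to verify that all such weights cancel in the homogeneous equation, leaving exactly the polynomials $\mathcal{A}$ and $\mathcal{C}$. A secondary technical point is the sign analysis of the quartic $\mathcal{A}(\cos u)$ needed to pin down $u_1,\dots,u_4$ (the qualitative picture with a single ring around each parabolic circle being the representative case). The symmetry argument giving $m\equiv0$ and the dependence on $u$ alone is what renders both steps tractable.
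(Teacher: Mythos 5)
Your overall route is the same as the paper's: compute $L,M,N$, build $\nu$ and $\xi$, read off $l,m,n$, clear the positive common factor to obtain \eqref{eq:LA_toro}, and then locate the real solutions by a sign analysis of the two coefficients. Writing $\rho=R+r\cos u$, the paper records precisely $l=\mathcal{A}/(16\rho^2\cos^2u)$, $m=0$, and $n=\bigl((3\cos^2u+1)R+4r\cos^3u\bigr)/(4\rho)$, so your plan reproduces its computational core (your first-order data $L=r^2\rho$, $M=0$, $N=r\rho^2\cos u$ are correct). Your symmetry argument forcing $m\equiv0$ and your proof that the cubic $g(c)=4rc^3+3Rc^2+R$ is positive on $[-1,1]$ are sound, and they actually supply details the paper only asserts (``$n=0$ has no real solution in $(-1,1)$''); your reduction ``real solutions exist exactly where $\mathcal{A}\le0$'' agrees with the paper's.

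The genuine gap is in your last step: ``evaluating $\mathcal{A}$ at $\cos u=\pm1$ \dots shows that $\{\mathcal{A}<0\}$ is a single $u$-interval about $\pi/2$.'' This tacitly needs $\mathcal{A}(\pm1)>0$, which is not true on the stated parameter range. One has $\mathcal{A}(1)=12R^2+28rR+16r^2>0$ always, but $\mathcal{A}(-1)=12R^2-28rR+16r^2=4(3R-4r)(R-r)$, which is negative whenever $r<R<\tfrac{4}{3}r$. In that regime $\mathcal{A}(\cos u)$ has three roots in $(-1,1)$ (for $R=1.2$, $r=1$: $\mathcal{A}(-1)<0$, $\mathcal{A}(-\tfrac12)>0$, $\mathcal{A}(0)<0$, $\mathcal{A}(1)>0$), so $\{\mathcal{A}\le0\}$ consists of the two rings \emph{plus} an extra band around the inner equator $u=\pi$, and the ``moreover'' clause fails as stated. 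Your argument can only be completed under the additional hypothesis $R>\tfrac{4}{3}r$: then $\mathcal{A}(\pm1)>0$ and $\mathcal{A}(0)<0$, and since the product of the four roots of the quartic is $-3R^2/(16r^2)<0$ while their sum is $-9R/(4r)<-3$, not all four roots can lie in $[-1,1]$; parity at the endpoints then forces exactly two roots, one on each side of $0$, yielding the two rings. To be fair, the paper's own proof has the identical gap --- it asserts without justification that $l=0$ has exactly two real solutions in $(-1,1)$ for every $R>r$ --- so you have faithfully reproduced the published argument, including its unproved (and, for $r<R<\tfrac{4}{3}r$, false) claim; but as a proof of the proposition as written, this step does not go through.
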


\begin{proof} Straightforward calculations, using the definition of the affine normal $\xi$ in~\eqref{eq:na} and the formulas in~\eqref{AIffcoeff}, lead to:
\[
\aligned l &= \frac{(15\cos^2u-3)R^2+4\cos u\, (9\cos^2u-2)rR+16\cos^4u \; r^2}{16(R+r\cos u)^2\cos^2u},\\
m &= 0,\\
n &=  \frac{(3\cos^2u+1)R+4r\cos^3u}{4(R+r\cos u)}.
\endaligned
\]
With the hypothesis $R>r$ it follows that $l=0$, an equation of degree four in the variable $\cos u$ has two real solutions in the interval $(-1,1)$, and $n=0$ has no real solution in the variable $\cos u$ in the mentioned interval.
\end{proof}

\begin{proposition}\label{prop:LA_toro} The affine asymptotic lines of the torus of revolution are shown in  Figures~\ref{fig:LA_toro} and \ref{figTorus}.
	The parabolic set defined by  $u=\pi/2$ and $u=3\pi/2$ are   solutions of both families of affine asymptotic lines and in the neighborhood of the four circles defined by $u=u_i$ (i=1,2,3,4), boundary of the rings $R_1$ and $R_2$, the behavior of affine asymptotic lines is of cuspidal type.
 \begin{figure}[H]
   \begin{center}
       \def\svgwidth{1.0\textwidth}
       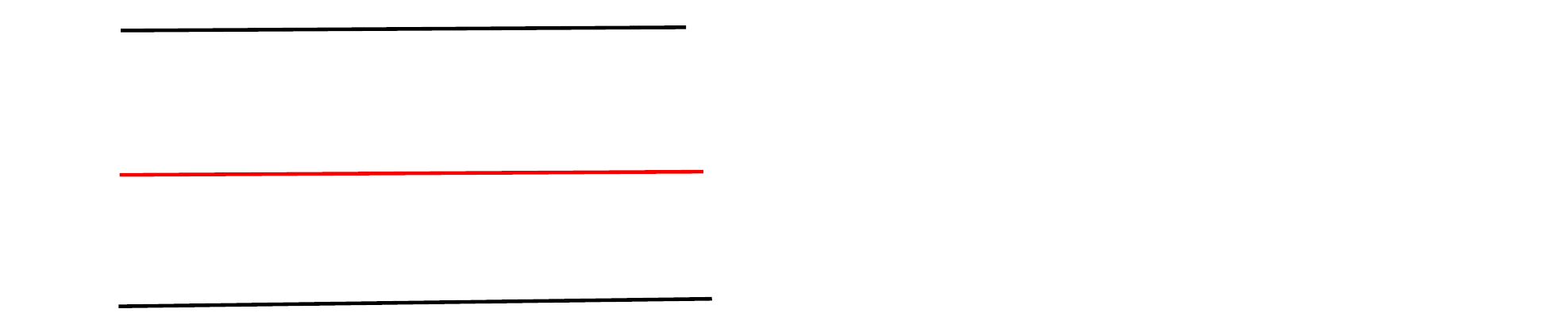
       \caption{Affine asymptotic lines of a torus of revolution (shown in the source).}
       \label{fig:LA_toro}
   \end{center}
\end{figure}
\begin{figure}[h!]
	\centering
	\includegraphics[width=.5\textwidth,clip]{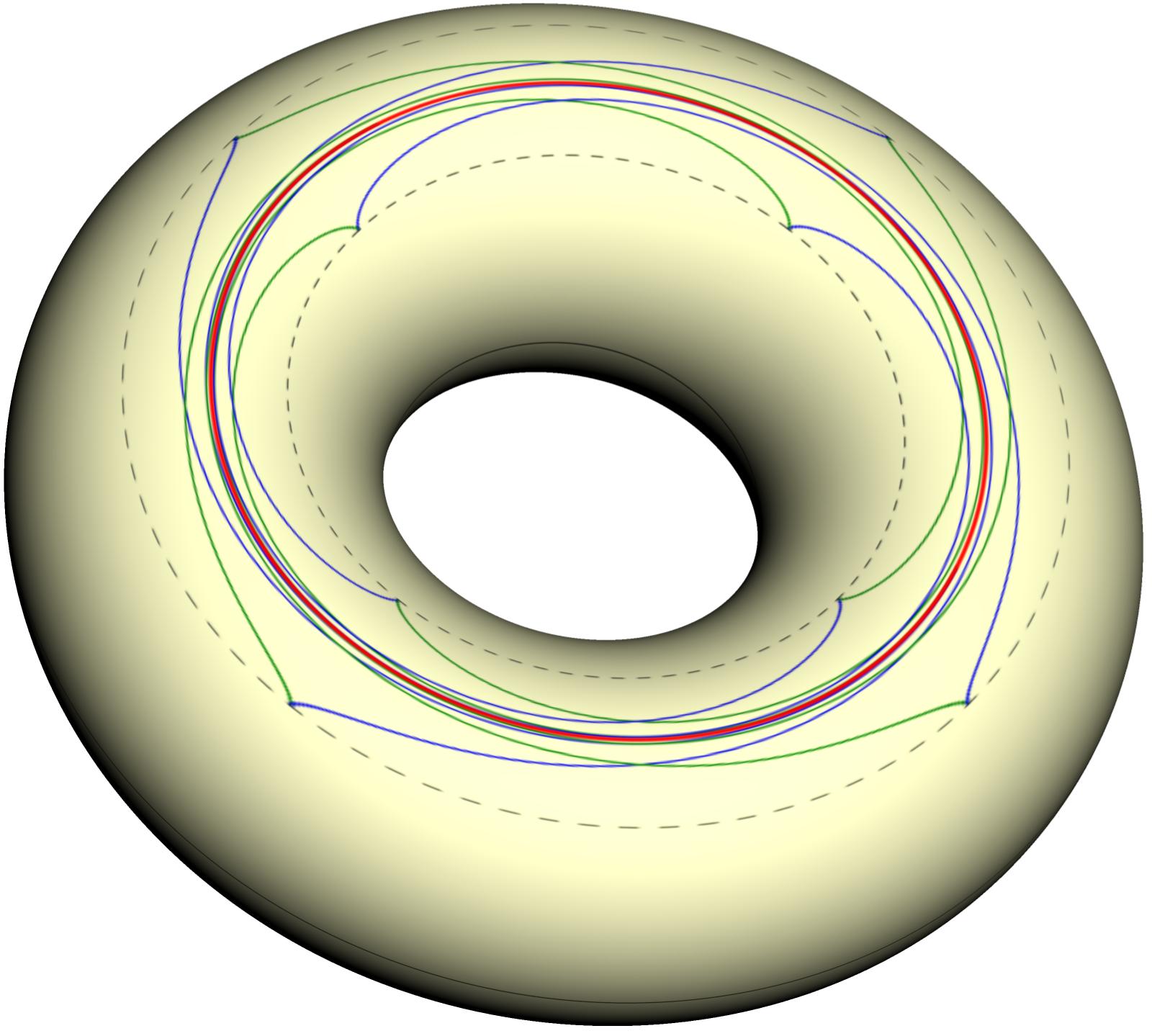}
	\caption{\small Affine asymptotic lines shown on the surface of the torus of revolution.}
	\label{figTorus}
\end{figure}
\end{proposition}
\begin{proof}
According to the method used in Section~\ref{sec:section5} for the analysis in a neighborhood of the parabolic set we consider the extended BDE which is given by $\bar{l}du^2+2\bar{m}dudv+\bar{n}dv^2=0$, where 
\begin{equation}
\aligned
\bar{l} &= (15\cos^2u-3)R^2+4rR\cos u\,(9\cos^2u-2) +16r^2 \cos^4u, \\
\bar{m} &= 0, \\
\bar{n} &= 4\cos^2u\,\big((3\cos^2u+1)R+4r\cos^3u \big)(R+r\cos u).
\endaligned
\label{eq:affine_torus}
\end{equation}
Along the parabolic set we have 
\begin{equation*}
\bar{l}\left(\frac{\pi}{2}\right)=\bar{l}\left(\frac{3\pi}{2}\right)=R\quad\text{and}\quad\bar{n}\left(\frac{\pi}{2}\right)=\bar{n}\left(\frac{3\pi}{2}\right)=0.
\end{equation*}
The tangent to the parabolic set is $(du,dv)=\left(0,1\right)$ and this part follows. Now, as $n=0$ has no real solution in the variable $\cos u$ as seen in the proof below, we will focus our attention in $l=0$ which corresponds to the affine parabolic set. Along the affine parabolic set $l=0$ and under these conditions  the unique affine asymptotic direction is given by $\left(1,0\right)$, which is transversal to the affine parabolic set and so the  behavior  is of cuspidal type as shown in Fig.\,\ref{fig:LA_toro}. This concludes the proof.
\end{proof}

The description of Euclidean asymptotic lines on the torus was  given in~\cite{White1907} and \cite{Garcia2009}.

\begin{proposition} The co-normal surface of the torus of revolution has two connected components with the following properties.

\begin{enumerate}
\item The parabolic set is formed of four circles as in Proposition~\ref{prop:LA_toro}.  
 
\item The hyperbolic region has four unbounded components.

\item The Euclidean asymptotic lines are as shown in Fig.\,\ref{fig:LA_toro}
\end{enumerate}
\end{proposition}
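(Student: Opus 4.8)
The plan is to reduce every assertion to the dictionary of Theorem~\ref{th_rel_geo} between the affine geometry of the torus $S$ and the Euclidean geometry of its co-normal surface $S^{\nu}$, exploiting throughout that both surfaces are rotationally symmetric about the $z$-axis. First I would record the co-normal map explicitly. Using the first equality in~\eqref{eq_conorm} together with $\alpha_u\wedge\alpha_v=-r(R+r\cos u)(\cos u\cos v,\cos u\sin v,\sin u)$ and $K^{\mathrm{e}}=\tfrac{\cos u}{r(R+r\cos u)}$, one obtains
\[
\nu(u,v)=-\,|K^{\mathrm{e}}|^{-1/4}\,(\cos u\cos v,\cos u\sin v,\sin u),
\]
which satisfies $\nu(u,v+\phi)=R_{\phi}\,\nu(u,v)$ for the rotation $R_{\phi}$ about the $z$-axis. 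Hence $S^{\nu}$ is a surface of revolution, and the whole analysis reduces to the profile $u\mapsto\bigl(|K^{\mathrm{e}}|^{-1/4}|\cos u|,\,-|K^{\mathrm{e}}|^{-1/4}\sin u\bigr)$ and to the already-computed coefficients $l,m,n$ of~\eqref{eq:affine_torus}.

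\textbf{Two components and the parabolic set (item~\itm{i}).} The key point is that $\nu$ is defined only on $S\setminus\mathbb{P}_{\alpha}$, where $\mathbb{P}_{\alpha}=\{u=\pi/2\}\cup\{u=3\pi/2\}$. Removing these two circles splits the torus into the annular bands $\{\cos u>0\}$ and $\{\cos u<0\}$, so $S^{\nu}$ has at most two components. To get exactly two I would prove that $\nu$ is injective on $S\setminus\mathbb{P}_{\alpha}$: the Gauss map $N$ is injective on each band, and its only other coincidence identifies $(u,v)$ with the opposite-band point $(\pi-u,v+\pi)$, where the scaling factor $|K^{\mathrm{e}}(u)|^{-1/4}=(r(R+r\cos u)/|\cos u|)^{1/4}$ takes a different value (equal only when $\cos u=0$). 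Thus the two band-images are disjoint. For the parabolic set, Theorem~\ref{th_rel_geo}\,\itm{ii} identifies the parabolic points of $S^{\nu}$ with the affine parabolic set $\{l=0\}$ of $S$, which by Proposition~\ref{prop:LA_toro} is the four circles $u=u_{i}$; since $u_{1},u_{4}$ lie in $\{\cos u>0\}$ and $u_{2},u_{3}$ in $\{\cos u<0\}$, equivariance sends them to four circles, two on each component.

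\textbf{Hyperbolic region (item~\itm{ii}) and asymptotic lines (item~\itm{iii}).} From the proof of Theorem~\ref{th_rel_geo} the second fundamental form of $S^{\nu}$ satisfies $e_{\nu}g_{\nu}-f_{\nu}^{2}=\lambda^{2}(ln-m^{2})$ with $\lambda\neq0$, so $\operatorname{sgn}K^{\mathrm{e}}_{S^{\nu}}=\operatorname{sgn}(ln-m^{2})$. For the torus $m=0$ and $n>0$ (indeed $n=0$ has no real root, Proposition~\ref{prop:LA_toro}), so the hyperbolic region of $S^{\nu}$ is the image of $\{l<0\}=R_{1}\cup R_{2}$. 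Each ring contains exactly one Euclidean parabolic circle ($\pi/2\in R_{1}$, $3\pi/2\in R_{2}$), and deleting it cuts each ring into two annuli, giving four pieces in all; because $|K^{\mathrm{e}}|^{-1/4}\to\infty$ as $u\to\pi/2,3\pi/2$, each image runs off to infinity along the axis and is unbounded, yielding four unbounded components (two on each sheet). Finally, item~\itm{iii} is immediate from Theorem~\ref{th_rel_geo}\,\itm{i}: a curve $\gamma(t)=(u(t),v(t))$ is an affine asymptotic line of $S$ if and only if $\nu\circ\gamma$ is a Euclidean asymptotic line of $S^{\nu}$, so the two families coincide in the $(u,v)$-domain and the configuration of Figure~\ref{fig:LA_toro} transfers verbatim.

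\textbf{Main obstacle.} The delicate step is the global injectivity of $\nu$ underpinning the two-component count: continuity alone only bounds the number of components from above, and one must genuinely exclude overlap of the two band-images. The two-to-one structure of the torus Gauss map together with the separating role of the factor $|K^{\mathrm{e}}|^{-1/4}$ handles this, but care is needed near $u\to\pi/2,3\pi/2$, where both sheets send points to infinity along the axis and the naive separation degenerates; I would therefore phrase the disjointness away from a neighborhood of $\mathbb{P}_{\alpha}$ and argue unboundedness of the ends separately. Everything else is either a direct invocation of Theorem~\ref{th_rel_geo} or a sign analysis of the explicit $l$ and $n$ already available from~\eqref{eq:affine_torus}.
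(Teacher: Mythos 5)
Your proposal is correct and takes essentially the same route as the paper: write the co-normal map explicitly as a surface of revolution, split it along the Euclidean parabolic circles into the two bands $\cos u>0$ and $\cos u<0$ (the two components, the paper's $\varepsilon=\pm1$), and transfer the parabolic set, the curvature sign, and the asymptotic lines back to the torus via Theorem~\ref{th_rel_geo} and the coefficients $l,m,n$ of Proposition~\ref{prop:LA_toro}. The paper's own proof is far terser---it records the formula for $\nu_{\varepsilon}$ and simply cites those two results---so your extra steps (injectivity of $\nu$ to get exactly two components, the identity $\sgn K_{S^{\nu}}=\sgn(ln-m^{2})=\sgn l$ since $n>0$, and unboundedness from $W\to 0$ as $u\to\pi/2,\,3\pi/2$) are correct elaborations of what the paper leaves implicit rather than a different method.
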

%
\begin{proof}
The co-normal surface $\nu$ is defined in the domain $(u,v)\in [0,2\pi)\times [0, 2\pi)$ with $u\neq \pm \frac{\pi}{2}$ and is given by
\begin{align*}
 \nu_{\varepsilon}&=\left[-\frac{ \cos u\cos v}{W}, -\frac{ \cos u\sin v}{W},-\frac{ \sin u }{W}\right]\\
 W &=  \left(\frac {\varepsilon\,\cos u }{r\, \left( R+r\,\cos u \right) }\right)^{\frac{1}{4}}
\end{align*}
Here $\varepsilon=1$ for $u\in (-\frac{\pi}{2}, \frac{\pi}{2})$ and $\varepsilon=-1$ for $u\in (\frac{\pi}{2}, \frac{3\pi}{2})$, see Fig.\,\ref{fig.geratrixconormal}.  
\begin{figure}[htpb!]
	\centering
	\includegraphics[width=.55\textwidth,clip]{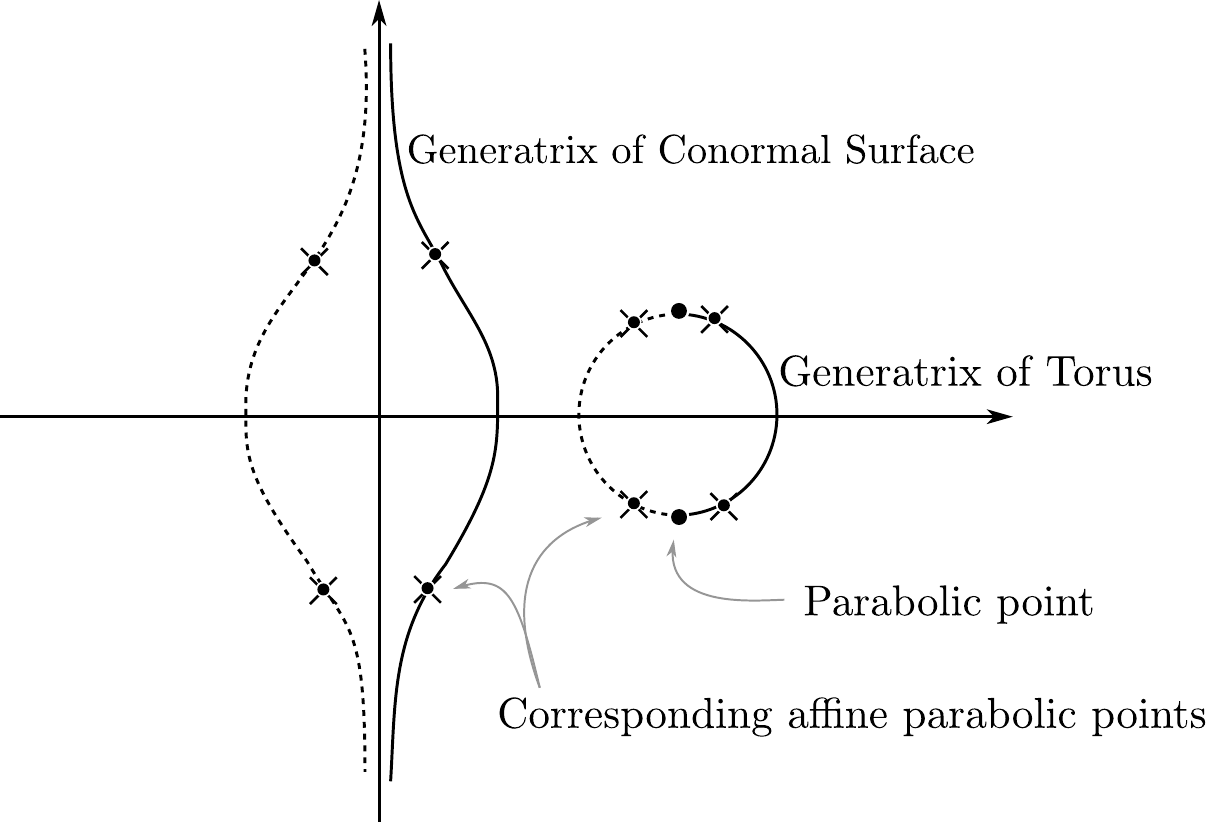}
	\caption{\small Corresponding generatrix curves of torus and the conormal surface.}
	\label{fig.geratrixconormal}
\end{figure}
\begin{figure}[htpb!]
	\centering
	\includegraphics[width=.85\textwidth,clip]{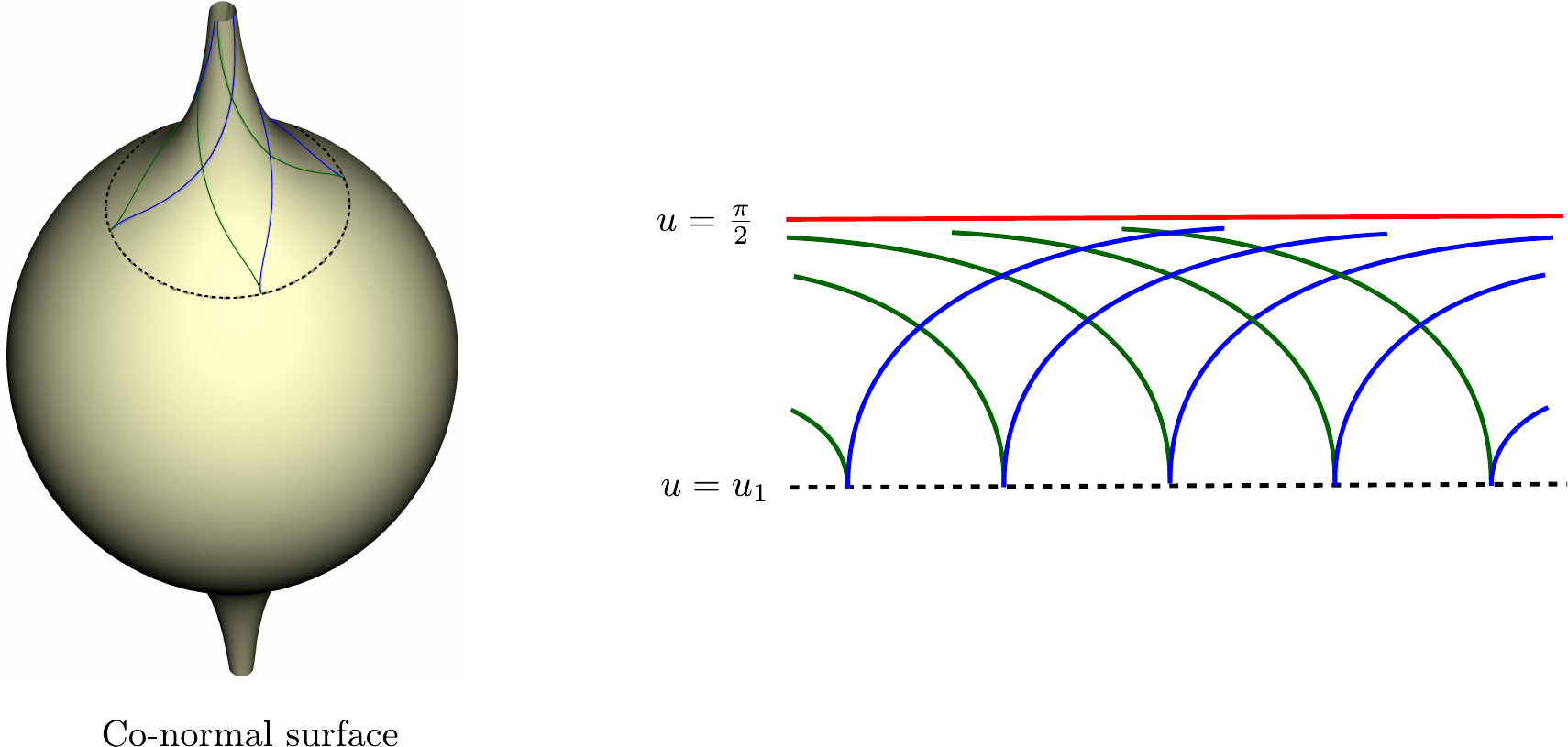}
	\caption{\small A connected component of the co-normal surface associated with the elliptical region of the torus (left).  Euclidean asymptotic curves of the co-normal surface of the torus (left) and at the source (right). When the asymptotic curve approaches the line $u=\frac{\pi}{2}$ in the source, the corresponding asymptotic curve on the co-normal surface goes away to infinity.}
	\label{fig15}
\end{figure}
The analysis of the asymptotic lines follows from Proposition~\ref{prop:LA_toro} and Theorem~\ref{th_rel_geo}. An illustration is presented in Fig.\,\ref{fig15}.
\end{proof}

%
%


\vspace{8mm}
\setlength{\parindent}{0pt}
\begin{minipage}{.58\linewidth}
\footnotesize
{\sc
Martín Barajas Sichac\'a \\
Departamento de Matem\'aticas \\
Pontificia Universidad Javeriana \\
Bogot\'a, Colombia \\
\textit{E-mail:} {\tt mabarajas@javeriana.edu.co} \\
}
\end{minipage}
\begin{minipage}{.58\linewidth}
\footnotesize
{\sc
Ronaldo A. Garcia \\
Instituto de Matem\'atica e Estat\'istica \\
Universidade Federal de Goi\'as \\
Goiânia-GO, Brazil \\
\textit{E-mail:} {\tt ragarcia@ufg.br} \\
}
\end{minipage}
\begin{minipage}{.42\linewidth}
\footnotesize
{\sc
Andr\'es Vargas \\
Departamento de Matem\'aticas \\
Pontificia Universidad Javeriana \\
Bogot\'a, Colombia. \\
\textit{E-mail:} {\tt a.vargasd@javeriana.edu.co} \\
}
\end{minipage}

\end{document}